\newtheorem{theorem}{Theorem}[section]
\newtheorem*{theorem*}{Theorem}
\newtheorem{conjecture}{Conjecture}[section]
\newtheorem{heuristic}{Heuristic}[section]
\newtheorem{lemma}{Lemma}[section]
\newtheorem*{remark*}{Remark}
 \newtheorem{corollary}{Corollary}[section]
\newtheorem{proposition}[theorem]{Proposition}
\newtheorem{remark}{Remark}[section]
\newcommand{\eps}{\varepsilon}
\newcommand{\R}{\mathbb{R}}
\newcommand{\C}{\mathbb{C}}
\newcommand{\Z}{\mathbb{Z}}
\renewcommand{\r}{\operatorname{rad}}
\renewcommand{\arg}{\operatorname{arg}}
\newcommand{\dist}{\operatorname{dist}}
\newcommand{\Disp}{\operatorname{Disp}_{\ell^1}}
\renewcommand{\Re}{\operatorname{Re}}
\renewcommand{\Im}{\operatorname{Im}}
\newcommand{\Ann}{\operatorname{Ann}}
\begin{document}

\numberwithin{equation}{section}

\title{The maximal length of the Erd\H{o}s--Herzog--Piranian lemniscate in high degree}

\author[Tao]{Terence Tao}
\address{Department of Mathematics, UCLA, 405 Hilgard Ave, Los Angeles CA 90024}
\email{tao@math.ucla.edu}

\keywords{}
\subjclass[2020]{30C75}
\thanks{}

\date{\today}

\begin{abstract} Let $n \geq 1$, and let $p \colon \C \to \C$ be a monic polynomial of degree $n$.  It was conjectured by Erd\H{o}s, Herzog, and Piranian that the maximal length of lemniscate $\{z \in \C: |p(z)| = 1\}$ is attained by the polynomial $p(z) = z^n-1$.  In this paper, building upon a previous analysis of Fryntov and Nazarov, we establish this conjecture for all sufficiently large $n$.
\end{abstract}

\maketitle

\section{Introduction} \label{intro}

\subsection{The Erd\H{o}s--Herzog--Piranian conjecture}

Let $n \geq 1$, and let $p \colon \C \to \C$ be a monic polynomial of degree $n$.  We introduce the regions
$$ E_R = E_R(p) \coloneqq \{ z \in \C: |p(z)| < R \}$$
for any $R \geq 0$, with corresponding boundaries
$$ \partial E_R = \partial E_R(p) = \{ z \in \C: |p(z)| = R \}.$$
We refer to $\partial E_1(p) = \{ z \in \C: |p(z)| = 1 \}$ as the \emph{Erd\H{o}s--Herzog--Piranian lemniscate} (or \emph{lemniscate} for short) associated to $p$.  The problem of bounding the arclength $\ell(\partial E_1(p))$ of this lemniscate was raised (amongst many other questions) by Erd\H{o}s, Herzog, and Piranian in \cite{EHP}.  For instance, when $n=1$, $\partial E_1(p)$ is a unit circle and $\ell(\partial E_1(p)) = 2 \pi$.

As $r \to \infty$, the lemniscates $\partial E_1(z^2-r^2)$, which are Cassini ovals with foci $\pm r$ and eccentricity $e=1/r$, can be calculated to have length $O(1/r)$.  Thus the lemniscate length can be arbitrarily small without further hypotheses; however, in \cite[Problem 12]{EHP} it was conjectured that when one also required $E_1(p)$ to be connected, then one had the lower bound
$$ \ell(\partial E_1(p)) \geq \ell(\partial E_1(z^n)) = 2\pi.$$
This was quickly verified by Pommerenke \cite{pommerenke59}.

The question of upper bounds for $\ell(\partial E_1(p))$ was more difficult, with the following conjecture raised in \cite[Problem 12]{EHP} (see also \cite[p. 247]{er61}, \cite{er82e}, \cite{er90}, \cite{er97f}, \cite[Problem 114]{bloom}):

\begin{conjecture}[Erd\H{o}s--Herzog--Piranian conjecture]\label{ehp-conj}  If $n \geq 1$, and $p$ is monic of degree $n$, then
$$ \ell(\partial E_1(p)) \leq \ell(\partial E_1(p_0))$$
where $p_0(z) \coloneqq z^n-1$.
\end{conjecture}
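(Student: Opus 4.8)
The plan is to combine an exact integral--geometric formula for the lemniscate length with a stability analysis near the conjectured extremiser $p_0(z) = z^n - 1$. Write $c_1,\dots,c_m$ for the distinct critical points of $p$, $\mu_1,\dots,\mu_m\ge 1$ for their multiplicities as zeros of $p'$ (so $p'(z)=n\prod_j(z-c_j)^{\mu_j}$ and $\sum_j\mu_j=n-1$), and $v_j=p(c_j)$ for the critical values. The starting point is the arclength identity
\[
\ell(\partial E_1(p)) = \int_0^{2\pi} F_p(\theta)\,d\theta,\qquad F_p(\theta):=\sum_{w:\,p(w)=e^{i\theta}}\frac{1}{|p'(w)|},
\]
which holds because, away from critical points, $\theta=\arg p$ is a local coordinate on the lemniscate with $|dz|=|d\theta|/|p'(z)|$, the level set $\{|p|=1\}$ fibres over the circle with exactly $n$ points in each fibre (all on $\partial E_1(p)$), and only finitely many $\theta$ correspond to critical values. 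Factoring $p-e^{i\theta}=\prod_k(z-w_k)$ and using $p'=n\prod_j(z-c_j)^{\mu_j}$ gives $\prod_{p(w)=e^{i\theta}}|p'(w)| = n^n\prod_j|e^{i\theta}-v_j|^{\mu_j}$, so by AM--GM on the $n$ summands of $F_p(\theta)$,
\[
F_p(\theta)\ \ge\ G_p(\theta):=\prod_j|e^{i\theta}-v_j|^{-\mu_j/n},
\]
with equality exactly when $|p'|$ is constant on the fibre over $e^{i\theta}$. For $p=p_0$ this holds for every $\theta$, so $F_{p_0}=G_{p_0}$ and $\ell(\partial E_1(p_0))=\int_0^{2\pi}|e^{i\theta}+1|^{-(n-1)/n}\,d\theta = 2^{1/n}\sqrt{\pi}\,\Gamma(\tfrac{1}{2n})/\Gamma(\tfrac12+\tfrac{1}{2n}) = 2n+4\log 2+O(1/n)$. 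Setting $\mathrm{def}(p):=\ell(\partial E_1(p_0))-\int_0^{2\pi}G_p(\theta)\,d\theta$ and $\mathrm{gap}(p):=\int_0^{2\pi}(F_p(\theta)-G_p(\theta))\,d\theta\ge 0$, we have $\ell(\partial E_1(p))=\ell(\partial E_1(p_0))-\mathrm{def}(p)+\mathrm{gap}(p)$, so the conjecture is precisely the inequality $\mathrm{def}(p)\ge\mathrm{gap}(p)$.

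Two facts indicate why $p_0$ should be extremal. (i) The \emph{model inequality} $\mathrm{def}(p)\ge 0$: by generalised Hölder with weights $\mu_j/(n-1)$, $\int_0^{2\pi}G_p\le\prod_j\bigl(\int_0^{2\pi}|e^{i\theta}-v_j|^{-(n-1)/n}\,d\theta\bigr)^{\mu_j/(n-1)}$, and a short potential-theoretic computation (using that $|w|^{-s}$ is subharmonic and the identity $\int_0^{2\pi}|e^{i\theta}-v|^{-s}\,d\theta = |v|^{-s}\int_0^{2\pi}|e^{i\theta}-|v|^{-1}|^{-s}\,d\theta$ for $|v|\ge 1$) shows $v\mapsto\int_0^{2\pi}|e^{i\theta}-v|^{-s}\,d\theta$ is maximised over $\C$ exactly at $|v|=1$; hence $\int_0^{2\pi}G_p\le\int_0^{2\pi}|e^{i\theta}+1|^{-(n-1)/n}\,d\theta$, with equality only when all $v_j$ coincide at a point of the unit circle. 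Qualitatively, since the exponent $(n-1)/n$ is so close to $1$, concentrating all $n-1$ units of critical multiplicity at one point of $S^1$ yields a single singularity contributing $\approx 2n$, whereas any splitting contributes only $O(1)$ per piece. (ii) If $\mathrm{gap}(p)=0$ then $|p'(w)|^n=n^n\prod_j|v_j-p(w)|^{\mu_j}$ identically; equating the moduli of these two polynomials of equal degree $n(n-1)$ forces $\prod_j(v_j-p(w))^{\mu_j}$ to be a perfect $n$-th power, and inspecting the orders of its zeros forces $m=1$, $\mu_1=n-1$, so $p$ has a single critical point of full multiplicity and hence $p(z)=z^n+c$ after a translation of $z$. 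Thus the conjectural equality set is exactly the orbit of $p_0$ under translations and rotations of the $z$-plane. The difficulty is that (i) and (ii) cannot simply be combined: the AM--GM bound makes $\int G_p$ a \emph{lower} bound for $\ell(\partial E_1(p))$, pointing the wrong way, so $\mathrm{def}\ge\mathrm{gap}$ is not reducible to $\mathrm{def}\ge0$ plus a bound on $\mathrm{gap}$ — it is just the theorem restated.

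The argument therefore proceeds in three stages. \textbf{Stage 1 (reduction via Fryntov--Nazarov).} Their analysis already gives $\ell(\partial E_1(p))=O(n)$ with a good constant; by re-running it and tracking the deficit in each of its inequalities one extracts the quantitative \emph{stability} statement that, for some fixed small $\varepsilon_0>0$, if $\ell(\partial E_1(p))\ge(2-\varepsilon_0)n$ then, after a translation and rotation of the $z$-plane, $p$ lies in a small neighbourhood $\mathcal N$ of $\{z^n+c:|c|=1\}$ — equivalently, the critical points concentrate near one point, the critical values near one point of $S^1$, and the zeros near the corresponding $n$-th roots of $-c$. Since $\ell(\partial E_1(p_0))=2n+O(1)$, any $p$ with $\ell(\partial E_1(p))\ge\ell(\partial E_1(p_0))$ lies in $\mathcal N$ once $n$ is large. \textbf{Stage 2 (rigidity on $\mathcal N$).} For $p=z^n+c+q$ with $\deg q\le n-1$ and $q$, $|\,|c|-1|$ small, the order-$(n-1)$ critical point near the origin splits into up to $n-1$ critical points with critical values $v_j$ clustered near a common $v_*$, and the $2n$-pronged singular ``rose'' of $\partial E_1(p_0)$ deforms accordingly; one estimates $\mathrm{def}(p)$ and $\mathrm{gap}(p)$ sharply enough to conclude $\mathrm{def}(p)>\mathrm{gap}(p)$ unless $q\equiv0$ and $|c|=1$. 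Here $\mathrm{def}(p)$ is bounded below by a quadratic form in the spread of the $v_j$ (from the strictness of the model inequality) plus a positive multiple of $|\,|c|-1|^2$, while $\mathrm{gap}(p)$ is bounded above by the variation of $|p'|$ along fibres, and one shows the former dominates. \textbf{Stage 3 (assembly).} Outside $\mathcal N$ apply Stage 1; inside $\mathcal N$ apply Stage 2; one must also check that the neighbourhood produced by Stage 1 nests inside the range of validity of Stage 2 uniformly in $n$.

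The main obstacle is the confluence analysis of Stage 2. Near $p_0$ the length functional is neither smooth nor naively Taylor-expandable: the singular rose is unstable, a generic small perturbation replacing it by an intricate configuration of up to $n-1$ nearby self-crossings and near-crossings of the lemniscate, so the error terms are sums of $\sim n$ individually $O(1)$ contributions rather than one manageable remainder. The crux is to control these sums uniformly — in particular to show that the model-inequality gain beats $\mathrm{gap}(p)$ on the delicate arc $\{e^{i\theta}\approx v_*\}$, where the $(n-1)$-st power in $|p'|=n\prod_j|z-c_j|^{\mu_j}$ amplifies the perturbation of the fibre points and is the only place the estimates are genuinely tight — and this uniform control is exactly what forces the ``sufficiently large $n$'' hypothesis. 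A secondary difficulty is Stage 1: one needs the quantitative stability statement, not merely the numerical bound of Fryntov--Nazarov, which requires revisiting each step of their argument and extracting its deficit.
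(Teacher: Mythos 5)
Your overall architecture (reduce to a small neighbourhood of $p_0$ by a quantitative global bound, then prove local rigidity there) does match the paper's strategy, and your ``model inequality'' $\mathrm{def}(p)\ge 0$ --- the resultant identity $\prod_{p(w)=e^{i\theta}}|p'(w)|=n^n\prod_j|e^{i\theta}-v_j|^{\mu_j}$, AM--GM on the fibre, H\"older in $\theta$, and the subharmonicity of $v\mapsto\int|e^{i\theta}-v|^{-s}\,d\theta$ --- is correct and is a genuinely different ingredient from anything in the paper. But it only produces a \emph{lower} bound on the length, as you concede, and the two stages that are supposed to convert it into an upper bound each have a serious gap.

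First, Stage 1 is not extractable from Fryntov--Nazarov, and at the threshold you state it is too weak even if granted. Their method has an intrinsic $O(\sqrt n)$ barrier (which they state explicitly); ``tracking the deficit'' in their inequalities yields concentration of critical points only at scale comparable to the length deficit, so a polynomial with $\ell(\partial E_1(p))\ge(2-\varepsilon_0)n$ can have critical points dispersed by $\asymp\varepsilon_0 n$ in the $\ell^1$ sense (e.g.\ \eqref{example-1} with $a$ a fixed small constant), which is nowhere near a perturbative neighbourhood of $\{z^n+c\}$. To localize $p$ within $o(1)$ of $p_0$ one needs the full force of the $2n+O(1)$ and $2n+4\log2+o(1)$ upper bounds, which in the paper require the $1/\pi$-weighted Stokes identity, the defect form of the triangle inequality \eqref{triangle} controlling the $\ell^1$ dispersion of critical points, and a separate origin-repulsion estimate; none of this is supplied by your outline. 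Second, and more structurally, your Stage 2 comparison is aimed at the wrong quantities. The deficit $\mathrm{def}(p)$ depends only on the critical \emph{values} $v_j$, yet in the example \eqref{example-1} the $v_j$ spread by only $O(a^n)$ while the length drops by $\Theta(a)$ (a first-order, not quadratic, effect, as in Heuristic \ref{main-heuristic}); a computation of the singular integral shows that there both $\mathrm{def}(p)$ and $\mathrm{gap}(p)$ are of size $\Theta(na)$ and nearly cancel, with the surviving $\Theta(a)$ gain coming from the dispersion of the critical \emph{points}, which lives inside the AM--GM defect $\mathrm{gap}(p)$ --- precisely the term you propose only to bound from above. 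So ``$\mathrm{def}$ bounded below by a quadratic form in the spread of the $v_j$, $\mathrm{gap}$ bounded above, former dominates'' is refuted by the paper's own first example, and no mechanism is offered for extracting the linear-in-dispersion gain that actually drives the inequality. (Separately, note that both your scheme and the paper only reach the conjecture for sufficiently large $n$; the statement for all $n\ge1$ remains open.)
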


\begin{remark}\label{normalize-rem} Note that the lemniscate length is unaffected if one replaces a polynomial $p(z)$ by a translate $p(z-z_0)$ or a rotation $e^{-in\theta} p(e^{i\theta} z)$ for $z_0 \in \C$ and $\theta \in \R$.  Thus for instance one could replace $p_0$ in the above conjecture by any translated, rotated version $(z-z_0)^n - e^{i\theta}$ of this polynomial (and it is natural to conjecture, especially in view of Theorem \ref{main-thm}(iv) below, that these are in fact the full set of maximizers of this length).  To eliminate these symmetries, we will find it convenient to work with polynomials $p$ that are \emph{normalized} in the sense that the $z^{n-1}$ coefficient vanishes and the constant coefficient is a non-positive real.  Then, $p_0$ is the only remaining conjectural candidate for a maximizer.
\end{remark}

From the above discussion, Conjecture \ref{ehp-conj} holds for $n=1$. To avoid minor degeneracies, we will now assume $n \geq 2$ henceforth.  Numerical experimentation by the author using the tool \emph{AlphaEvolve} suggested that this conjecture was true for all $n$, as the tool quickly settled on $p_0$ (or a rotation and translation thereof) as a potential maximizer of $\ell(\partial E_1(p))$ for various choices of degree $n$, though this fell well short of a rigorous proof.

A routine calculation (see Lemma \ref{out}) shows that
\begin{equation}\label{lbo}
 \ell(\partial E_1(p_0)) = 2^{1/n} B\left( \frac{1}{2}, \frac{1}{2n} \right)
\end{equation}
where $B$ is the beta function.  In particular, from the Stirling approximation one can calculate that
\begin{equation}\label{ep0}
     \ell(\partial E_1(p_0)) = 2n + 4 \log 2 + O\left(\frac{1}{n}\right) = 2n + O(1).
\end{equation}
as $n \to \infty$.

\begin{figure}[t]
\centering
\includegraphics[width=2.75in]{./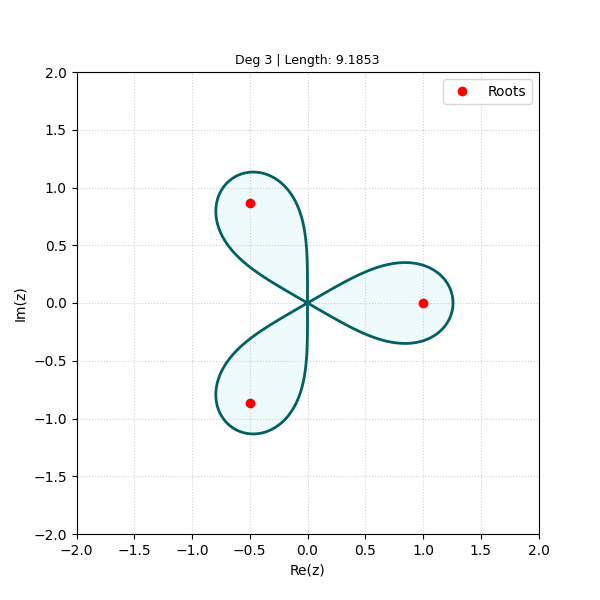}
\includegraphics[width=2.75in]{./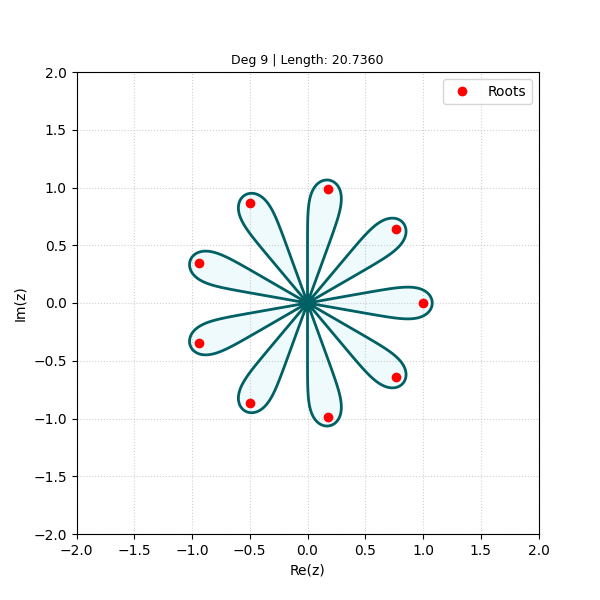}
\caption{The lemniscate $\partial E_1(p_0)$ for $n=3,9$.  The approximation $2n + 4 \log 2$ to the lemniscate length is $\approx 8.773$ for $n=3$ and $\approx 20.773$ for $n=9$. The initial code for this and subsequent plots were provided by AlphaEvolve and Gemini.}
\label{fig:p0}
\end{figure}

\begin{remark}
It is also instructive to try to establish the bound \eqref{ep0} geometrically.  The region $E_1(p_0)$ decomposes into $n$ thin ``petals'' emenating from the origin, while the lemniscate $\partial E_1(p_0)$ consists of $2n$ ``spokes'' emenating from the origin that are approximately unit line sgments, together with $n$ ``tips'' resembling semicircles of radius $1/n$, for a total length of approximately $2n+O(1)$; see Figure \ref{fig:p0}.  The spokes arise from the observation that in the bulk region $D(0,1-C/n)$ for a large constant $C$, the condition $|p_0(z)|=1$ is approximately equivalent to the condition that $z^n$ lies on the imaginary axis.  If instead one takes an ansatz $z = e^{2\pi i j/n} (1 + w/n)$ to study the $j^{\mathrm{th}}$ tip, the condition $|p_0(z)|=1$ is approximately equivalent to the condition that $|e^w-1|=1$; see Figure \ref{fig:tip}.  See Lemma \ref{out} for a more rigorous justification of these heuristics.
\end{remark}

\begin{figure}[t]
\centering
\includegraphics[width=5in]{./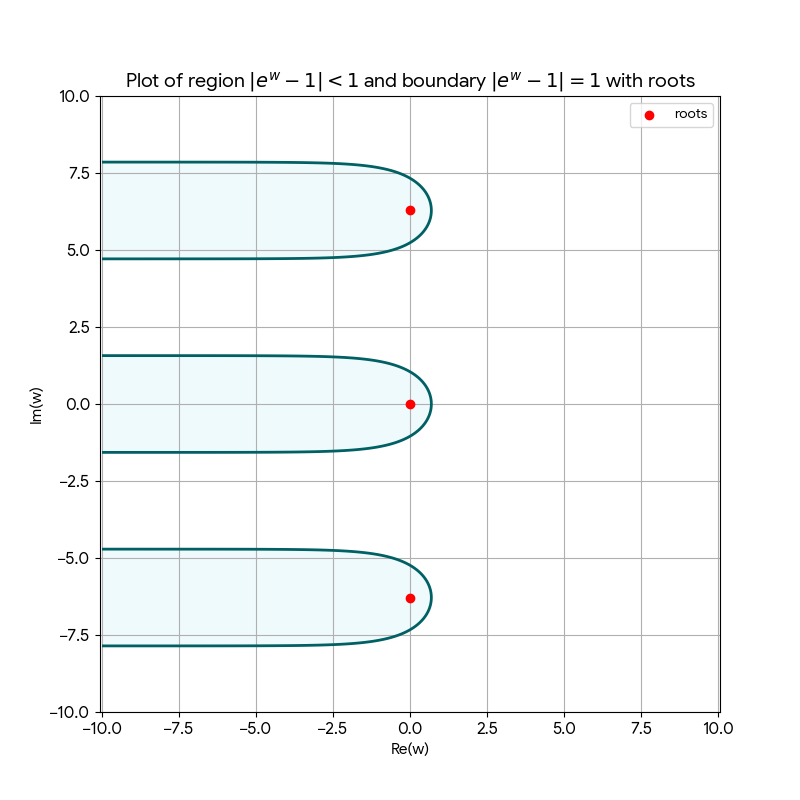}
\caption{The curve $\{ w: |e^w - 1| = 1\}$, together with the roots $w = 2k \pi i$ of $e^w-1=0$.  This is the asymptotic profile of the lemniscate $E_1(p_0)$ as $n \to \infty$, after rescaling around an $n^{\mathrm{th}}$ root of unity.}
\label{fig:tip}
\end{figure}

There are several results in the literature obtaining upper bounds on $\ell(\partial E_1(p))$ in the direction of establishing Conjecture \ref{ehp-conj}: see Table \ref{results-table}.  In particular, prior to this work, this conjecture was settled for $n=2$ \cite{eremenko}, and for $p$ sufficiently close to $p_0$ \cite[\S 6]{fryntov}, and up to an error of $O(n^{7/8})$ in the general case \cite[\S 8]{fryntov}.  In \cite[\S 8]{fryntov} the authors write ``We have no doubt that the power $7/8$ can be substantially improved though
to bring it below $1/2$ seems quite a challenging problem.''

\begin{table}[ht]
\begin{tabular}{|c|c|c|}
\hline
Upper bound & Hypotheses & Reference \\
\hline
$74 n^2$ && Pommerenke \cite{pommerenke61}\\
$8e\pi n$ && Borwein \cite{borwein} \\
$4\pi n$ && Dol\v{z}enko \cite{dolzenko} \\
$4\pi n - 2\pi$ && Fryntov--Nazarov \cite[\S 7]{fryntov} \\
$9.173 n$ && Eremenko--Hayman \cite{eremenko} \\
$2\pi n + 2\pi \sqrt{n} - 2\pi$ && Fryntov--Nazarov \cite[\S 8]{fryntov} \\
$2\pi n$ && Danchenko \cite{danchenko} \\
$\pi n + O(\sqrt{n \log n})$ & $n \geq 2$ & Kosukhin \cite[Theorem 3]{kosukhin} \\
\hline
$2n + O(n^{7/8})$ && Fryntov--Nazarov \cite[\S 9]{fryntov}\\
$\ell(\partial E_1(p_0))$ &$n=1$ & trivial\\
$\ell(\partial E_1(p_0))$ &$n=2$ & Eremenko--Hayman \cite{eremenko} \\
$\ell(\partial E_1(p_0))$ &$p$ close to $p_0$ & Fryntov--Nazarov \cite[\S 6]{fryntov}\\
\hline
$2n + O(\sqrt{n})$ && Theorem \ref{main-thm}(i) \\
$2n + O(1)$ && Theorem \ref{main-thm}(ii) \\
$2n + 4 \log 2 + o(1)$ && Theorem \ref{main-thm}(iii) \\
$\ell(\partial E_1(p_0))$ & $n$ sufficiently large & Theorem \ref{main-thm}(iv) \\
\hline
\end{tabular}
\caption{A summary of upper bounds on the lemniscate length $\ell(\partial E_1(p))$ for monic polynomials $p$ of degree $n \geq 2$, including the new results in this paper.}\label{results-table}
\end{table}

\subsection{Main results}

Our main results sharpen the upper bounds, and in fact establish the Erd\H{o}s--Herzog--Piranian conjecture for sufficiently large $n$:

\begin{theorem}[Main theorem]\label{main-thm}  Let $p$ be a monic polynomial of degree $n$.
\begin{itemize}
    \item[(i)] One has $\ell(\partial E_1(p)) \leq 2n + O(\sqrt{n})$.
    \item[(ii)] One has $\ell(\partial E_1(p)) \leq 2n + O(1)$.
    \item[(iii)] One has $\ell(\partial E_1(p)) \leq 2n + 4 \log 2 + o(1)$ as $n \to \infty$.
    \item[(iv)] If $n$ is sufficiently large, then $\ell(\partial E_1(p)) \leq \ell(\partial E_1(p_0))$.  Furthermore, equality is attained if and only if $p$ is equal to $p_0$ up to rotation and translation (i.e., $p(z) = (z-z_0)^n - e^{i\theta}$ for some $z_0 \in \C$ and $\theta \in \R$).
\end{itemize}
\end{theorem}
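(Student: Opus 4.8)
The plan is to upgrade the near-optimal bound $\ell(\partial E_1(p)) \le 2n + 4\log 2 + o(1)$ of Theorem~\ref{main-thm}(iii) to the sharp inequality via a dichotomy, pairing it with the perturbative analysis near $p_0$ of \cite[\S 6]{fryntov}. By Remark~\ref{normalize-rem} we may take $p$ normalized, and by \eqref{ep0} one has $\ell(\partial E_1(p_0)) = 2n + 4\log 2 + O(1/n)$; so beyond (iii) what is missing is precision at scale $1/n$ near the extremizer. The target splits into two claims about a normalized monic polynomial $p$ of degree $n$:
(a) (\emph{stability/far}) there are $\eps_0,\delta_0 > 0$ such that if the root multiset of $p$ is at distance at least $\eps_0$ --- in a suitable root-matching metric, after optimizing over rotations --- from the $n$-th roots of unity, then $\ell(\partial E_1(p)) \le 2n + 4\log 2 - \delta_0$;
(b) (\emph{local/near}) if that distance is less than $\eps_0$, then $\ell(\partial E_1(p)) \le \ell(\partial E_1(p_0))$, with equality only if $p = p_0$.
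Given both, take $n$ large enough that $\ell(\partial E_1(p_0)) > 2n + 4\log 2 - \delta_0$; then a normalized $p$ either falls under (b) or satisfies $\ell(\partial E_1(p)) \le 2n + 4\log 2 - \delta_0 < \ell(\partial E_1(p_0))$, so in all cases $\ell(\partial E_1(p)) \le \ell(\partial E_1(p_0))$, with equality forcing case (b) and hence $p = p_0$. Undoing the normalization gives that the maximizers are exactly the $(z-z_0)^n - e^{i\theta}$.

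Claim (b) will be drawn from \cite[\S 6]{fryntov}, whose local analysis identifies $p_0$, in normalized coordinates, as a strict local maximum of $\ell(\partial E_1(\cdot))$, yielding both the inequality and the equality characterization near $p_0$. The subtlety is the \emph{size} of the neighborhood: against the natural root-matching metric it may need to shrink, plausibly like a fixed fraction of the root spacing $\asymp 1/n$ of $p_0$, precisely because that is the scale on which $\ell(\partial E_1(p_0))$ departs from $2n+4\log 2$. Consequently $\eps_0$ and $\delta_0$ in (a) may themselves depend on $n$, and the choice must be made compatibly with \cite[\S 6]{fryntov} and so that $n\delta_0 \to \infty$; this accounting at scale $1/n$ is part of the core difficulty.

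The heart of the argument is (a), a stability sharpening of Theorem~\ref{main-thm}(iii). One begins from the exact representation
\[ \ell(\partial E_1(p)) = \int_0^{2\pi} \sum_{z\,:\, p(z) = e^{i\theta}} \frac{1}{|p'(z)|}\, d\theta, \]
obtained from $|dz| = |d\arg p(z)|/|p'(z)|$ on $\{|p| = 1\}$ by using $\arg p$ as a coordinate and summing over the (generically $n$) preimages of each $e^{i\theta}$. The proof of (iii) bounds this by splitting $\partial E_1(p)$, in essence, into a \emph{bulk} part --- consisting of $2n$ nearly-radial, nearly-unit ``spokes'' and contributing at most $2n - O(1)$ --- and roughly $n$ \emph{tip} pieces near the turning points, each (after rescaling by $n$) close to the universal profile $\{|e^w-1| = 1\}$ of Figure~\ref{fig:tip} and contributing $4\log 2 + o(1)$ in aggregate. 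Each of these estimates is \emph{rigid}: the bulk bound loses a fixed positive amount unless almost every root of $p$ lies within $o(1)$ (as $n\to\infty$) of the unit circle and the roots are nearly equally spaced --- so the spokes are genuinely near-unit, near-radial and non-interfering --- and the tip contribution loses a fixed positive amount, summed over tips, unless each rescaled local picture is close to $\{|e^w-1|=1\}$. Quantifying and adding these losses produces the deficit $\delta_0$. Making all of this rigorous --- propagating every error term of (iii) uniformly in $n$, extracting a deficit that genuinely controls the root-matching distance from $p$ to $p_0$, and dovetailing it with \cite[\S 6]{fryntov} at the delicate scale $1/n$ --- is the main obstacle; the equality characterization then follows, since ``far'' $p$ are ruled out by the strict inequality in (a) and the remaining ``near'' $p$ by the strict local maximality in \cite[\S 6]{fryntov}.
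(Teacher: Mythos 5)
Your proposal is a strategy outline whose two central claims, (a) and (b), are both left unproven, and each one conceals the actual work. For (a), you assert that the bulk/tip decomposition underlying part (iii) is ``rigid'' and that quantifying the losses yields a deficit $\delta_0$ controlled by a root-matching distance to $p_0$. Nothing in the proof of (iii) produces such a statement for free: extracting a quantitative deficit is precisely what occupies the paper's Section \ref{main-iv-sec} (Propositions \ref{ets}, \ref{pots}, \ref{inside-2}, \ref{annulus-2}, \ref{outside-again}), and it is done not in terms of the roots of $p$ at all but in terms of the \emph{critical points}, via the dispersion $\|p\|_1$, the origin repulsion $\|p\|_0$, the defect in the triangle inequality (Corollary \ref{defect-psi-cor}), and Rouch\'e-type counting of lemniscate--circle intersections. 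The paper explicitly remarks that the $n$ zeroes of $p$ play almost no role; a stability estimate phrased in a root-matching metric is a different (and unestablished) statement, and there is no argument given that closeness of $\ell(\partial E_1(p))$ to $2n+4\log 2$ forces the roots of $p$ to be near the roots of unity.

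For (b), you outsource the local analysis to \cite[\S 6]{fryntov}, but you yourself flag the fatal issue without resolving it: the neighborhood of $p_0$ in which that result applies must contain the set of $p$ surviving case (a), and you need $\delta_0\gg 1/n$ (since $\ell(\partial E_1(p_0))=2n+4\log 2+O(1/n)$) while $\eps_0$ may have to shrink with $n$. Whether these scales can be matched --- uniformly in $n$, and in a metric compatible with whatever parametrization \cite[\S 6]{fryntov} uses --- is exactly the ``accounting at scale $1/n$'' you defer, and it is the crux. The paper avoids this entirely: it never invokes \cite[\S 6]{fryntov}, instead first proving $\|p\|=o(1)$ (Propositions \ref{ets}, \ref{pots}) and then establishing the self-contained linear improvement \eqref{a-ineq}, $\ell(\partial E_1(p))\leq \ell(\partial E_1(p_0))-c\|p\|+O(\|p\|/C_0)$, whose error term is controlled \emph{relative to the gain} so that $\|p\|=0$ follows. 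As written, your argument has no complete proof of either half of the dichotomy, and it also does not address parts (i)--(iii), which part (iv) builds on.
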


Of course, each part of this theorem implies the previous ones, so one could retain only the strongest conclusion (iv) and discard the other three components (i), (ii), (iii) of the theorem; but our proof will be structured by establishing (i), (ii), (iii), and (iv) in turn, with the proof of each part relying on the preceding ones.  Part (iv) has some resemblance to a previous result \cite{tao-sendov} of the author establishing Sendov's conjecture for sufficiently high degree polynomials, but the proof techniques here are rather different from those in that paper.

\begin{remark}  All implied constants in our arguments are effectively computable.  Thus, in view of Theorem \ref{main-thm}(iv), the full verification of Conjecture \ref{ehp-conj} now reduces to checking the conjecture for an explicitly bounded number of $n$, although we have made no attempt to optimize this bound.  This \emph{almost} allows us to declare this conjecture to be decidable; however, there is still one (very unlikely) scenario which could obstruct this conjecture from being decided in finite time, in which there is some bounded degree $n$ and some competitor normalized extremizer $p_1 \neq p_0$ for which the lemniscate lengths $\ell(\partial E_1(p_0))$, $\ell(\partial E_1(p_1))$ happen to be identical.  Due to the (likely) transcendental nature of these lengths, it is not immediately obvious that one could decide between the rival scenarios $\ell(\partial E_1(p_0)) < \ell(\partial E_1(p_1))$, $\ell(\partial E_1(p_0)) = \ell(\partial E_1(p_1))$, or $\ell(\partial E_1(p_0)) > \ell(\partial E_1(p_1))$ by a finite computation.
\end{remark}

\subsection{Methods of proof}

We now discuss the methods of proof of Theorem \ref{main-thm}.  It will be convenient to use the following result of Eremenko and Hayman \cite{eremenko}:

\begin{proposition}\label{erem}  Let $n \geq 1$.  Then there exists a monic polynomial $p$ of degree $n$ which maximizes $\ell(\partial E_1(p))$ among all such polynomials.  Furthermore, the lemniscate $\partial E_1(p)$ is connected and contains all the critical points of $p$.  Finally, we can assume $p$ to be normalized in the sense of Remark \ref{normalize-rem}.
\end{proposition}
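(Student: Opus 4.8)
\emph{Approach.} I would prove the four assertions---existence of a maximizer, connectedness of the extremal lemniscate, containment of the critical points, and the reduction to a normalized maximizer---in that order: the first by a compactness argument, the middle two by a surgery performed on the extremal polynomial, and the last as a formality.

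\emph{Existence.} Identify a monic polynomial of degree $n$ with the tuple of its $n$ lower-order coefficients, so the admissible set is $\C^n$. The functional $p\mapsto\ell(\partial E_1(p))$ is continuous on $\C^n$: the lemniscate is the zero set of the real polynomial $|p|^2-1$ of degree at most $2n$ in $(\Re z,\Im z)$; it contains no isolated point (a positive local minimum of $|p|$ is forbidden by the minimum modulus principle); and by the Crofton formula $\ell(\partial E_1(p))$ is a fixed constant times the integral over all lines $L$ of $\#(L\cap\partial E_1(p))$, whose integrand is bounded by $2n$ and, as the coefficients of $p$ vary, converges for every line $L$ outside the measure-zero set of lines tangent to, or through a singular point of, the limiting curve, so dominated convergence yields continuity. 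The functional is bounded above---say by Dol\v{z}enko's bound $4\pi n$ from Table \ref{results-table}---and is at least $\ell(\partial E_1(p_0))>0$. To prevent a maximizing sequence from escaping to infinity I would use the elementary estimate that if $z\in\partial E_1(p)$ and $D$ denotes the diameter of the root multiset of $p$, then the distance from $z$ to its nearest root is at most $(2/D)^{1/(n-1)}$: indeed $\prod_j|z-r_j|=1$ while at least one root lies at distance $\geq D/2$ from $z$. Hence $\partial E_1(p)$ lies inside $n$ disks of radius $(2/D)^{1/(n-1)}$, and the part of a degree-$\leq 2n$ real-algebraic curve inside a disk of radius $\varepsilon$ has length at most $C(n)\varepsilon$ (again by Crofton, the lines meeting such a disk having measure $O(\varepsilon)$), so $\ell(\partial E_1(p))\leq C'(n)D^{-1/(n-1)}\to0$ as $D\to\infty$. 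Thus a maximizing sequence has roots in a fixed disk, hence bounded coefficients, and a subsequential limit is a maximizer.

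\emph{Connectedness and the critical points.} Let $p$ be a maximizer. The open set $\{|p|>1\}$ is the complement of the compact set $\overline{E_1(p)}$ and has no bounded component, since a bounded component $W$ would be a domain on which $|p|$ attains its maximum (namely $1$, attained on $\partial W$) in the interior; hence $\{|p|>1\}$ is connected. Plane topology together with Riemann--Hurwitz then shows that $\partial E_1(p)$ is connected \emph{if and only if} no critical value of $p$ has modulus exceeding $1$: if $\partial E_1(p)$ is connected, $\overline{E_1(p)}$ is a full continuum, so $\{|p|>1\}\cup\{\infty\}$ is a topological disk on which $p$ restricts to a proper degree-$n$ map onto $\{|w|>1\}\cup\{\infty\}$, totally ramified over $\infty$, leaving no room for further branch points; conversely the absence of such branch points makes $\{|p|>1\}$ an annular cover of a punctured disk, forcing $\overline{E_1(p)}$ to be a full continuum and hence $\partial E_1(p)$ connected. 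Applying the same Euler-characteristic count to each (necessarily simply connected, by the maximum modulus principle) component $A$ of $E_1(p)$ shows $A$ carries exactly (number of roots of $p$ in $A$)$-1$ critical points; consequently, \emph{once connectedness is known}, every critical point of $p$ lies on $\partial E_1(p)$ precisely when each component of $E_1(p)$ contains exactly one root---the configuration realised by $p_0$. Thus both substantive claims reduce to excluding, at the maximizer, a disconnection or a component carrying two or more roots, and I would exclude these by a surgery: relocating the roots of one offending component so as to merge it with a neighbor (in the first case) or to split it into two (in the second), and showing this does not decrease---and, in a strict version, strictly increases---the total lemniscate length, contradicting maximality. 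The relevant tool is the first-variation formula: along a smooth family $p_t$ with $\dot p:=\partial_t p_t|_{t=0}$ a polynomial of degree $\leq n-1$, one has, away from the finitely many self-intersection points of the lemniscate (which require separate treatment),
\[
\frac{d}{dt}\Big|_{t=0}\ell(\partial E_1(p_t))\;=\;-\int_{\partial E_1(p)}\frac{\kappa(z)}{|p'(z)|}\,\Re\!\big(\overline{p(z)}\,\dot p(z)\big)\,ds(z),
\]
with $\kappa$ the curvature of the lemniscate; the goal is to exhibit, for any extremal configuration other than the conjectural ``bouquet of $n$ single-root petals,'' an admissible $\dot p$ strictly increasing this length.

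\emph{Normalization and the main obstacle.} By Remark \ref{normalize-rem} the lemniscate length is unchanged under $p(z)\mapsto p(z-z_0)$ and $p(z)\mapsto e^{-in\theta}p(e^{i\theta}z)$; given any maximizer, translating by $z_0$ equal to $\tfrac1n$ times the $z^{n-1}$-coefficient annihilates that coefficient, and a subsequent rotation by a suitable $\theta$ makes the constant coefficient a non-positive real without reviving the $z^{n-1}$-coefficient, producing a normalized maximizer. The crux of the whole argument is the surgery step: quantitatively controlling the lemniscate length through a \emph{non}-infinitesimal relocation of roots---in particular through the instant when two components coalesce and the lemniscate acquires, then sheds, a self-crossing---is delicate, and is essentially the content of the analysis of Eremenko and Hayman \cite{eremenko}.
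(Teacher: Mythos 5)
Your proposal ultimately rests on the same foundation as the paper's proof: the paper disposes of the existence, connectedness, and critical-point claims entirely by citing \cite[Lemmas 5, 6]{eremenko} and then performs the same translation-and-rotation normalization you describe, while you reduce the two structural claims to a root-relocation surgery that you explicitly do not carry out, acknowledging that it "is essentially the content of the analysis of Eremenko and Hayman." So for the substantive middle claims the two arguments are the same citation in different clothing; your contribution is the (correct) topological unpacking via the maximum principle and Riemann--Hurwitz of \emph{what} the surgery must exclude, and a self-contained existence argument that the paper does not attempt. That existence argument is a reasonable sketch --- the coercivity estimate $d \leq (2/D)^{1/(n-1)}$ and the $O_n(\varepsilon)$ bound on algebraic arcs in small disks are sound --- but the continuity of $p \mapsto \ell(\partial E_1(p))$ is asserted rather than proved: the claim that $\#(L \cap \partial E_1(p_k)) \to \#(L \cap \partial E_1(p))$ for a.e.\ line $L$ needs an actual argument (simple real zeros of $|p|^2-1$ restricted to $L$ persist, extra real zeros can only emerge from complex pairs limiting onto tangency or singular points, and the set of such lines is null), and upper semicontinuity along the maximizing sequence is really all that is needed. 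The one point worth flagging as a potential trap is your proposed first-variation formula: Eremenko and Hayman do not argue by first variation but by a global quasiconformal surgery, precisely because the length functional need not be differentiable at configurations where components coalesce and the lemniscate develops a self-crossing --- which is exactly the degenerate situation you must rule out --- so if you intended to make the surgery rigorous yourself, that formula would not suffice at the critical configurations.
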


\begin{proof} The existence of a maximizing $p$ whose lemniscate is connected and contains all of its critical points follow from\footnote{We remark that the proofs of these lemmas are non-elementary, relying on the theory of quasiconformal mapping as well as the Riemann--Hurwitz formula.  It seems of interest to obtain more elementary proofs of these results.  The fact that the lemniscate contains its critical points is not essential for our arguments, as we can use the Gauss--Lucas theorem as a substitute; but the connectedness will be used in a few places, namely in Section \ref{area-sec} and in the proof of Lemma \ref{initial-apps}.} \cite[Lemmas 5,6]{eremenko}.  One can then replace $p(z)$ by a translation $p(z-z_0)$ so that the $z^{n-1}$ coefficient vanishes, and then replace $p(z)$ by a rotation $e^{-in\theta} p(e^{i\theta} z)$ so that $p(0)$ is a non-positive real.
\end{proof}

Let us refer to a polynomial $p$ obeying all the properties of Proposition \ref{erem} as a \emph{normalized maximizer}.  For instance, when $n=2$ the only possible normalized maximizer is $p_0(z) = z^2-1$, which explains why the $n=2$ case of Conjecture \ref{ehp-conj} was already established in \cite{eremenko}.  However, these properties do not uniquely identify the normalized maximizer for $n \geq 3$; see Figure \ref{fig:compete}.

\begin{figure}[t]
\centering
\includegraphics[width=5in]{./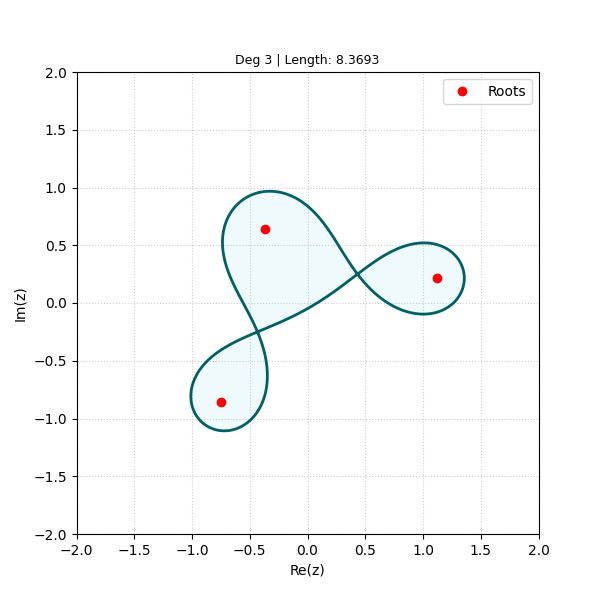}
\caption{The lemniscate $\partial E_1(z^3 - 3e^{i\pi/3} a^2 z - \sqrt{1-4a^6})$ for $a=1/2$.  This lemniscate is connected and contains the two critical points $\pm e^{i\pi/6} a$ (as self-crossings), and is normalized, but is not a maximizer, since its length is less than that of $\partial E_1(z^3-1)$.}
\label{fig:compete}
\end{figure}

To prove Theorem \ref{main-thm}, it clearly suffices to do so for normalized maximizers; in particular, (iv) is equivalent to the assertion that $p_0$ is the only normalized maximizer when $n$ is sufficiently large.

An important role in the arguments\footnote{In contrast, the $n$ zeroes of $p$ shall play almost no role whatsoever.} will be played by the $n-1$ critical points $\zeta$ of $p$ (the roots of $p'$, counting multiplicity), as well as the constant term $p(0)$ of the polynomial.  Note that from the fundamental theorem of algebra we have
\begin{equation}\label{pp-factor}
    p'(z) = n \prod_\zeta (z-\zeta)
\end{equation}
and hence by the fundamental theorem of calculus, $p$ is completely determined by the critical points $\zeta$ and the constant term $p(0)$.
For instance, we have $p=p_0$ if and only if all the critical points $\zeta$ vanish and $p(0)=-1$.  To measure the deviation of $p$ from $p_0$, we introduce the ($\ell^1$) \emph{dispersion}
\begin{equation}\label{semi-norm}
    \|p\|_1 \coloneqq \sum_\zeta |\zeta|,
\end{equation}
the \emph{origin repulsion}
\begin{equation}\label{origin-repulse}
    \|p\|_0 \coloneqq n |1 + p(0)|^{1/n},
\end{equation}
and the \emph{total size}
\begin{equation}\label{size-def}
    \|p\| \coloneqq \|p\|_1 + \|p\|_0
\end{equation}
Clearly we have
\begin{equation}\label{sdef}
    0 \leq \|p\|_1, \|p\|_0 \leq \|p\|
\end{equation}
and $\|p\|$ vanishes if and only if $p=p_0$.  Thus one can view the quantities $\|p\|_1, \|p\|_0, \|p\|$ as measuring the extent to which $p$ deviates from $p_0$.  For future reference, we make the simple but useful observation (from Markov's inequality) that $\|p\|_1$ (and hence $\|p\|$) limits the number of critical points $\zeta$ that lie outside of a disk, in that
\begin{equation}\label{markov}
    \# \{ \zeta: \zeta \notin D(0,r) \} \leq \frac{\|p\|_1}{r} \leq \frac{\|p\|}{r}
\end{equation}
for any $r>0$ (where $\zeta$ ranges over critical points counted with multiplicity).  In particular, as there are only $n-1$ critical points in all, one has (assuming $n>2$)
\begin{equation}\label{markov-many}
    \# \{ \zeta: \zeta \in D(0,2\|p\|_1/n) \} \geq n-1-\frac{n}{2} \gg n.
\end{equation}

Also, as $p(0)$ is normalized to be a non-positive real, we have
$$ \dist(p(0), \partial D(0,1)) = |1+p(0)| = n^{-n} \|p\|_0^n$$
and hence
\begin{equation}\label{npz}
    |p(z) - p(0)| \geq n^{-n} \|p\|_0^n
\end{equation}
for any $z \in \partial E_1(p)$. This already hints at why we view $\|p\|_0$ as a measure of origin repulsion; see Lemma \ref{origin-repulsion} for a further development of this inequality.

\begin{remark}\label{l2}  Another natural measure of deviation from $p_0$ is the \emph{$\ell^2$ dispersion} (or \emph{unnormalized variance})
\begin{equation}\label{l2-def}
 \|p\|_2 \coloneqq \sum_\zeta |\zeta|^2.
\end{equation}
This quantity will occasionally arise in our arguments, but in view of the trivial bound
\begin{equation}\label{l1-l2}
    \|p\|_2 \leq \|p\|_1^2,
\end{equation}
control on $\|p\|_2$ is not as desirable as control on $\|p\|_1$.  For instance, by \eqref{markov-many}, a bound of the form $\|p\|_1 = O(1)$ will localize most of the critical points to lie within $O(1/n)$ of the origin, whereas the weaker bound $\|p\|_2 = O(1)$ merely localizes most of the critical points to within $O(1/\sqrt{n})$ of the origin.  As such, we will avoid relying on the $\ell^2$ dispersion in our arguments.
\end{remark}

A guiding heuristic for our analysis is as follows:

\begin{heuristic}\label{main-heuristic}  For normalized polynomials $p$, the lemniscate length $\ell(\partial E_1(p))$ ``behaves like'' $\ell(\partial E_1(p_0)) - c \|p\|$ for some constant $c>0$.  In particular:
\begin{itemize}
    \item[(i)] Any increase in the dispersion $\|p\|_1$ is expected to induce a proportional decrease in the lemniscate length.
    \item[(ii)] Any increase in the origin repulsion $\|p\|_0$ is expected to induce a proportional decrease in the lemniscate length.
    \item[(iii)]  As $\ell(\partial E_1(p))$ approaches $\ell(\partial E_1(p_0))$, both the dispersion $\|p\|_1$ and the origin repulsion $\|p\|_0$ are expected to approach zero, and the lemniscate $\partial E_1(p)$ should increasingly resemble $\partial E_1(p_0)$ in shape.
\end{itemize}
\end{heuristic}

We now pause to illustrate this heuristic with some examples.  Let us first consider the normalized polynomial
\begin{equation}\label{example-1}
 p(z) = z^n - \frac{n}{n-2} a^2 z^{n-2} - 1
\end{equation}
for some $n>2$ and $0 < a < 1$.  Here, there is no origin repulsion: $\|p\|_0=0$.  However, the dispersion $\|p\|_1$ is equal to $2a$, since two of the critical points have moved from $0$ to $\pm a$ when compared against $p_0$.  When $n$ is odd, what happens to the $n$ ``petals'' of the lemniscate $\partial E_1(p_0)$ when one replaces it with $\partial E_1(p)$ is that one of the petals retreats\footnote{Because of this, the lemniscate is no longer connected, and no longer contains all of its critical points.  However, this does not contradict Proposition \ref{erem}, since $p$ is no longer a maximizer.  Similarly for the example \eqref{example-2} below.} a distance $\approx a$ from the origin, and two other opposing petals merge for a similar distance, shortening the total length by $\approx 4a = 2 \|p\|_1$; see Figure \ref{fig:odd}.  When $n$ is even, one instead has two petals retreat by distance $\approx a$, again shortening the lemniscate length by $\approx 4a = 2 \|p\|_1$; see Figure \ref{fig:even}. From the perspective of the argument principle (or Rouche's theorem), this reflects the fact that the polynomial $p(z)$ is well approximated by $p_0(z)$ for $|z| \gg a$, but is instead approximated by $\frac{n}{n-2} a^2 z^{n-2} - 1$ for $|z| \ll a$, causing the $2n$ ``spokes'' in the lemniscate to drop down to $2(n-2)$ in this region.

\begin{figure}[t]
\centering
\includegraphics[width=5in]{./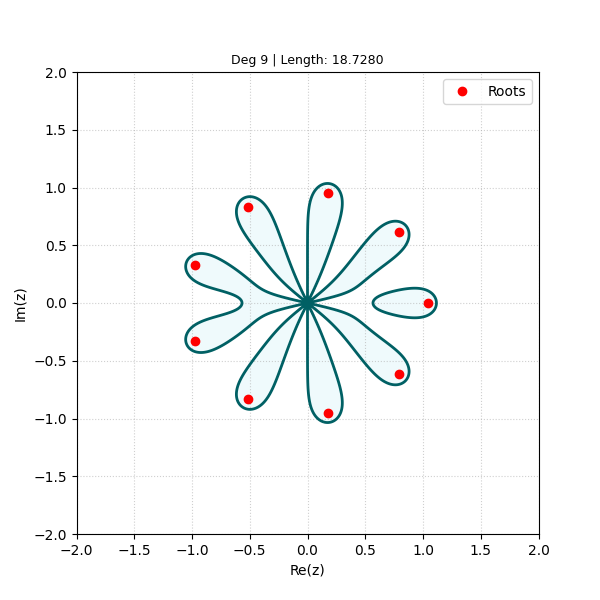}
\caption{The lemniscate $\partial E_1(z^n - \frac{n}{n-2} a^2 z^{n-2} - 1)$ with $n=9$ and $a=1/2$, which has shortened in length from $\partial E_1(p_0)$ by approximately $4a = 2\|p\|_1 = 2$.}
\label{fig:odd}
\end{figure}

\begin{figure}[t]
\centering
\includegraphics[width=5in]{./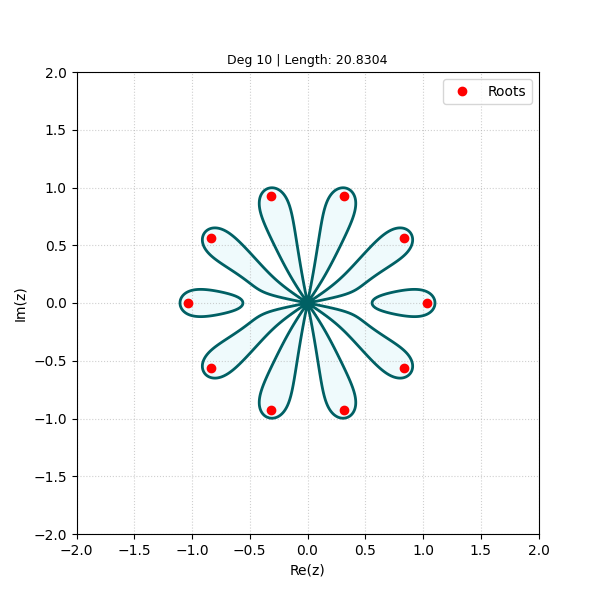}
\caption{The lemniscate $\partial E_1(z^n - \frac{n}{n-2} a^2 z^{n-2} - 1)$ with $n=10$ and $a=1/2$, which experiences a similar length shortening to the example in Figure \ref{fig:odd}, albeit with a different topology caused by the change in the parity of $n$.}
\label{fig:even}
\end{figure}

The second key example is the normalized polynomial
\begin{equation}\label{example-2}
 p(z) = z^n - 1 - (a/n)^n
\end{equation}
for some $0 < a \ll 1$.  Here there is no dispersion as all the critical points remain at the origin: $\|p\|_1=0$. On the other hand, the origin repulsion $\|p\|_0$ is equal to $a$.  Effectively, the $(a/n)^n$ term repels the $2n$ spokes of the lemniscate $\partial E_1(p_0)$ outwards by a distance $\approx a/n$, shortening the lemniscate length by an amount slightly less\footnote{There is a lower order correction term comparable to the circumference $2\pi a/n$ of $\partial D(0,a/n)$.} than $2a = 2\|p\|_0$; see Figure \ref{fig:repel}.

\begin{figure}[t]
\centering
\includegraphics[width=5in]{./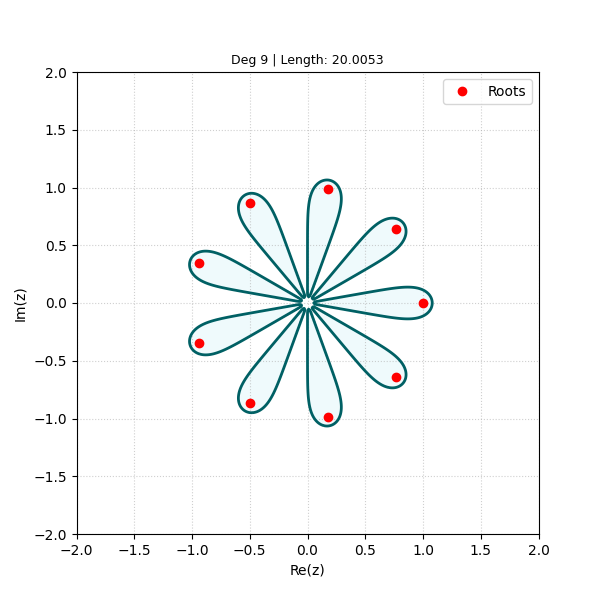}
\caption{The lemniscate $\partial E_1(z^n - 1 - (a/n)^n)$ with $n=9$ and $a=1/2$, which has shortened in length from $\partial E_1(p_0)$ by slightly less than $2a = 2\|p\|_0 = 1$. due to the $n$ petals of the lemniscate being repelled from the origin by a distance $\approx a/n$.}
\label{fig:repel}
\end{figure}

It remains to make these heuristics rigorous.  The first step in this direction is to follow Fryntov and Nazarov \cite{fryntov} and use Stokes' theorem to represent the lemniscate length $\ell(\partial E_1(p))$ as an area integral.  There are multiple useful representations of this type.  In \cite{fryntov}, the representations
\begin{equation}\label{length-1}
    \ell(\partial E_1(p)) = - \int_{E_1(p)} \frac{|\varphi|}{\varphi} \frac{\varphi'}{\varphi}\ dA
\end{equation}
and
\begin{equation}\label{length-2}
    \ell(\partial E_1(p)) = 2 \int_{E_1(p)} |p'|\ dA - \int_{E_1(p)} \frac{|p\varphi|}{\varphi} \psi\ dA
\end{equation}
were utilized, where $dA$ denotes area measure,
\begin{equation}\label{phi-def}
    \varphi(z) \coloneqq \frac{p'(z)}{p(z)}
\end{equation}
is the logarithmic derivative of $p$, and
\begin{equation}\label{psi-def}
    \psi(z) \coloneqq \frac{p''(z)}{p'(z)}
\end{equation}
is the logarithmic derivative of $p'$.  The function $\psi$ will play a central role in our arguments. Note from \eqref{pp-factor} that
\begin{equation}\label{psi-expand}
    \psi(z) = \sum_\zeta \frac{1}{z-\zeta},
\end{equation}
which by the triangle inequality yields the trivial upper bound
\begin{equation}\label{triangle}
    |\psi(z)| \leq \sum_\zeta \frac{1}{|z-\zeta|}.
\end{equation}
As we shall see, an analysis of the defect in \eqref{triangle} will be crucial in obtaining the later components of Theorem \ref{main-thm}.

The representation \eqref{length-1}, when combined with \eqref{triangle} and the classical P\'olya inequality \cite{polya}
\begin{equation}\label{polya}
    |E_r(p)| \leq |E_r(z^n)| = \pi r^{2/n},
\end{equation}
(with $|E|$ denoting the area of $E$) was shown in \cite[\S 7]{fryntov} to easily lead to the bound $\ell(\partial E_1(p)) \leq 4\pi n - 2\pi$; in \cite[\S 8]{fryntov} a similar argument used \eqref{length-2} to obtain the improvement $\ell(\partial E_1(p)) \leq 2\pi n + 2\pi \sqrt{n} - 2\pi$.  Finally, in \cite[\S 9]{fryntov} the identity \eqref{length-2}, combined with additional decompositions and an integration by parts, was used to eventually obtain the asymptotically superior bound $\ell(\partial E_1(p)) \leq 2n + O(n^{7/8})$.

In Section \ref{stokes-sec} below, we modify the above identities by inserting various cutoffs, and also implement the integration by parts argument from \cite[\S 9]{fryntov} to ``homogenize'' the integrand.  This lets us obtain the following flexible and useful variant of \eqref{length-1}, \eqref{length-2}.  We define an absolutely continuous measure $\Psi$ on the complex plane by the formula
\begin{equation}\label{Psi-def}
    \Psi(E) \coloneqq \frac{1}{\pi} \int_E |\psi|\ dA.
\end{equation}
The key result below asserts, roughly speaking, that this absolutely continuous measure $\Psi$ is a good (upper bound) proxy\footnote{This phenomenon is faintly reminscent of the landscape function being a good proxy for the distribution of eigenfunctions of a Schr\"odinger operator \cite{mayboroda}.  We do not know if there is any deeper insight to be drawn from this resemblance.} for the arclength measure on the lemniscale $\partial E_1(p)$; this is illustrated visually in Figure \ref{fig:heat}.

\begin{figure}[t]
\centering
\includegraphics[width=5in]{./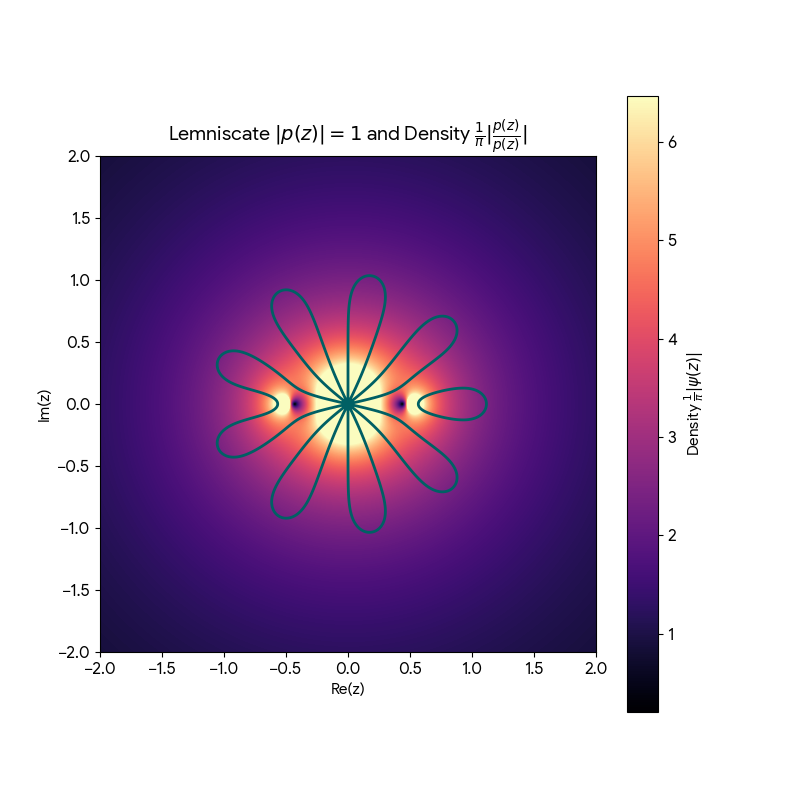}
\caption{The lemniscate from Figure \ref{fig:odd}, superimposed upon a heat map of the density $\frac{1}{\pi} |\psi|$ of $\Psi$.  The dispersion of the zeroes creates ``holes'' in the heat map between the critical points $0, \pm 1/2$ due to defects in the triangle inequality \eqref{triangle}.  These holes correspond to the portions of the lemniscate that have retreated from the origin.}
\label{fig:heat}
\end{figure}

\begin{theorem}[Application of Stokes' theorem]\label{stokes}
    Let $p$ be a polynomial, $\Omega \subset \C$ a semi-algebraic\footnote{That is to say, $\Omega$ is defined by a finite number of polynomial inequalities involving the real and imaginary part of the variable $z$.  In particular, the boundary of $\Omega$ will be piecewise smooth by the semi-algebraic stratification theorem \cite{bochnak}.} open set, and $\lambda \colon \C \to \R^+$ be an arbitrary smooth everywhere positive function. Then we have the estimate
\begin{equation}\label{stokes-eq}
    \ell(\partial E_1(p) \cap \overline{\Omega}) \leq \Psi(E_2(p) \cap \Omega) + O( X_1 + X_2 + X_3 + X_4 + X_5),
\end{equation}
where $\psi$ was given by \eqref{psi-def}, and the error terms $X_1,X_2,X_3,X_4,X_5$ are given by
\begin{align}
X_1 &\coloneqq \int_{E_2(p) \cap \Omega} |p'|\ dA \label{X1-def} \\
X_2 &\coloneqq \int_{E_2(p) \cap \Omega, |\psi| \leq \lambda} |\psi|\ dA \label{X2-def}\\
X_3 &\coloneqq \int_{E_2(p) \cap \Omega, |\psi| \geq \lambda/2} \frac{|\psi'|}{|\psi|}\ dA \label{X3-def}\\
X_4 &\coloneqq \int_{E_2(p) \cap \Omega} \frac{|\lambda'|}{|\lambda|}\ dA \label{X4-def}\\
X_5 &\coloneqq \ell(\partial \Omega). \label{X5-def}
\end{align}
\end{theorem}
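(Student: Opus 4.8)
The plan is to carry out the Stokes' theorem computation of Fryntov and Nazarov \cite{fryntov} that underlies \eqref{length-1}--\eqref{length-2}, but (a) localised to $\overline{\Omega}$, which is exactly where the term $X_5 = \ell(\partial\Omega)$ enters, and (b) with two smooth cutoffs inserted. The first cutoff is $\chi \coloneqq \rho(|p|)$, where $\rho \colon \R \to [0,1]$ is smooth with $\rho \equiv 1$ on $(-\infty,1]$, $\rho \equiv 0$ on $[2,\infty)$, and $|\rho'| = O(1)$; thus $\chi$ equals $1$ on a neighbourhood of $\overline{E_1(p)}$ and is supported in $E_2(p)$, which is the reason every domain of integration in \eqref{stokes-eq} and in \eqref{X1-def}--\eqref{X4-def} is $E_2(p)\cap\Omega$ rather than $E_1(p)\cap\Omega$. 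Since $|\nabla\chi| = |\rho'(|p|)|\,|\nabla|p|| = |\rho'(|p|)|\,|p'|$ is supported in $E_2(p)\setminus E_1(p)$ and is $O(|p'|)$ there, every occurrence of $\partial_{\bar z}\chi$ below contributes a term controlled by $\int_{E_2(p)\cap\Omega}|p'|\,dA = X_1$. The second cutoff is $h \coloneqq \eta(|\psi|/\lambda)$, where $\eta \colon \R^+ \to [0,1]$ is smooth with $\eta(t) = 1$ for $t \ge 1$, $\eta(t) = 0$ for $t \le 1/2$, and $|\eta'| = O(1)$; it excises the region $\{|\psi| \le \lambda/2\}$ on which the density $\frac1\pi|\psi|$ of $\Psi$ is too small to serve as an efficient proxy for arclength.

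The starting point is the pointwise identity relating arclength on each level curve $\partial E_r(p)$ to the complex line element $dz$ through the unimodular factor $\varphi/|\varphi|$, with $\varphi$ as in \eqref{phi-def} (valid away from the zeros of $p$ and of $p'$); together with the fact, established in \cite{fryntov} from the connectedness of the lemniscate and the monotonicity of $\arg p$ along it, that the local sign can be chosen consistently, this expresses $\ell(\partial E_1(p)\cap\overline{\Omega})$ as the modulus of a line integral which, after inserting $\chi$ and applying Green's theorem (with small discs about the zeros of $p$ and about the critical points $\zeta$ excised, whose boundary contributions vanish in the limit since the integrand has modulus $O(1)$), becomes the area integral of $\partial_{\bar z}$ of a potential over $E_2(p)\cap\Omega$, up to the $X_5$ and $X_1$ errors already described. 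I would then run the integration-by-parts/``homogenisation'' step of \cite[\S9]{fryntov}: using the algebraic identities $\varphi'/\varphi = \psi - \varphi$ and $(p/p')' = 1 - \psi/\varphi$ (equivalently $p\varphi/|\varphi| = p'/|\varphi|$, with $|p'/\varphi| = |p|$ on $E_1(p)$), one rewrites the potential so that, pointwise and where $h = 1$, its $\bar\partial$-derivative is bounded by $\frac1\pi|\psi|$ together with correction terms that are small precisely because $|\psi|$ is large there. Integrating the $\frac1\pi|\psi|$ contribution over $E_2(p)\cap\Omega$ produces exactly $\Psi(E_2(p)\cap\Omega)$.

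It remains to account for $h$. On the region $\{|\psi| \le \lambda\}$ where $h < 1$, one uses instead the cruder bound $|\partial_{\bar z}(\text{potential})| = O(|\psi|)$, valid without homogenisation, so that this part of the area integral is $O\bigl(\int_{E_2(p)\cap\Omega,\,|\psi|\le\lambda}|\psi|\,dA\bigr) = O(X_2)$. When $\partial_{\bar z}$ falls on $h$, the resulting factor $\partial_{\bar z}h = \eta'(|\psi|/\lambda)\,\partial_{\bar z}(|\psi|/\lambda)$ is supported in $\{\lambda/2 \le |\psi| \le \lambda\}$ and satisfies $|\partial_{\bar z}h| = O\bigl(\tfrac{|\psi'|}{|\psi|} + \tfrac{|\lambda'|}{|\lambda|}\bigr)$ there, using $|\nabla|\psi|| \le |\psi'|$ and $|\psi| \asymp \lambda$ on this set; multiplied by the $O(1)$ remaining factor and integrated, this gives $O(X_3 + X_4)$. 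The dependence of the homogenised potential on the regularising parameter $\lambda$ supplies the remaining portion of $X_4$. Assembling the main term $\Psi(E_2(p)\cap\Omega)$ and the errors $O(X_1 + X_2 + X_3 + X_4 + X_5)$ yields \eqref{stokes-eq}.

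The main obstacle is the homogenisation step: the potential must be chosen so that, after the Stokes reduction and integration by parts, the modulus of its $\bar\partial$-derivative is bounded by $\frac1\pi|\psi|$ with the \emph{sharp} constant $\frac1\pi$ (not merely $O(|\psi|)$) on $\{|\psi|\ge\lambda\}$, while all correction terms organise themselves exactly into the five quantities $X_1,\dots,X_5$ — this is where the Fryntov--Nazarov argument has to be pushed to its limit. A secondary technical point is to check that excising the zeros of $p$ and the critical points $\zeta$ (which, by Proposition \ref{erem}, may actually lie on the lemniscate) produces no boundary contribution, and that the semi-algebraicity of $\Omega$ guarantees $\partial\Omega$ is piecewise smooth of finite length, so that $X_5$ is meaningful and each application of Stokes' theorem is legitimate.
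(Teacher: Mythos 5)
Your overall architecture --- localise to $\Omega$ (producing $X_5$), cut off in $|p|$ (producing $X_1$), cut off in $|\psi|/\lambda$ (producing $X_2$, and $X_3+X_4$ when the derivative falls on that cutoff or on $\lambda$), then integrate by parts --- matches the paper's proof, and your bookkeeping of which error term absorbs which cutoff derivative is essentially correct. However, the step you defer as ``the main obstacle'' is the actual content of the theorem, and the way you propose to carry it out cannot work: you ask for a potential whose $\bar\partial$-derivative is \emph{pointwise} bounded by $\frac{1}{\pi}|\psi|$ on $\{|\psi|\geq\lambda\}$. No such pointwise bound exists. After the Stokes reduction the integrand is $\cos(\arg p' - \arg p - \arg\psi)\,|\psi|$ times cutoffs, and pointwise the best available bound is $\cos_-(\arg p'-\arg p-\arg\psi)\,|\psi|\leq|\psi|$ with $\cos_-(x)=\max(-\cos x,0)$; the constant is $1$, not $\frac{1}{\pi}$. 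The gain of $\frac{1}{\pi}$ in the paper is an averaging phenomenon: $\cos_-$ has mean $\frac{1}{\pi}$, so $\cos_--\frac{1}{\pi}$ admits a bounded $2\pi$-periodic antiderivative $F$, and by \eqref{wirt-9} one has $\partial F(\arg p'-\arg p-\arg\psi)=\frac{1}{2i}\left(\cos_--\frac{1}{\pi}\right)\left(\psi+O(|p'|)+O(|\psi'|/|\psi|)\right)$ on the relevant support. Thus the excess $(\cos_--\frac{1}{\pi})|\psi|$ is, up to $X_1$- and $X_3$-type errors, a total derivative of a bounded quantity, and integrating by parts moves the derivative onto the bounded factor $\eta(|p|)\sigma(\psi/\lambda)|\psi|/\psi$, yielding $O(X_1+X_3+X_4+X_5)$. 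Without this equidistribution device your argument only proves the estimate with $\Psi$ replaced by $\pi\Psi$, i.e.\ it reproduces the $2\pi n$-type bounds of \cite{fryntov} rather than the $2n$-type bound that drives the rest of the paper.

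A secondary but genuine problem is your choice of one-sided cutoff $\rho\equiv1$ on $(-\infty,1]$. The reduction from $\varphi'/\varphi=\psi-\varphi$ to $\psi+O(|p'|)$ requires $|p|\asymp1$, since $\varphi=p'/p$ blows up at the zeros of $p$, all of which lie in $E_1(p)$ and hence in the support of your cutoff; excising small discs around them does not help, because the resulting error $\int|\varphi|\,dA$ over a neighbourhood of $E_1(p)$ is of size $O(n)$ by \eqref{multip} and is not controlled by any of $X_1,\dots,X_5$. The paper instead takes $\eta(|p|)$ to be a \emph{two-sided} cutoff supported in $[1/2,2]$ and equal to $1$ near $1$, which simultaneously keeps $p$ and $p'$ nonzero (so that $\arg p$, $\arg p'$ are defined) and makes $p'/p=O(|p'|)$ on the support; the restriction to $E_1$ is then discarded, and the domain enlarged to $E_2(p)\cap\Omega$, via the pointwise inequality $-\cos(\theta)1_F\leq\cos_-(\theta)$, a step your outline also omits.
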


A key feature here is the additional factor of $\frac{1}{\pi}$ in \eqref{Psi-def}, which is not present in \eqref{length-1} or \eqref{length-2}, and is the key to obtaining estimates that are close to $2n$ rather than to $2\pi n$ or $4\pi n$.
Roughly speaking, the restriction to $E_1(p)$ in \eqref{length-1}, \eqref{length-2}, together with the phase $\frac{|\varphi|}{\varphi}$, introduces a weight that behaves like $\cos_-(x) \coloneqq \max(-\cos x, 0)$ where $x$ is related to the argument of $\varphi$.  As observed in \cite[(16)]{fryntov}, the average of this weight is $\frac{1}{\pi}$, which heuristically explains \eqref{stokes-eq}; see Figure \ref{fig:cos}. Integration by parts is used to make this heuristic rigorous.

\begin{figure}[t]
\centering
\includegraphics[width=5in]{./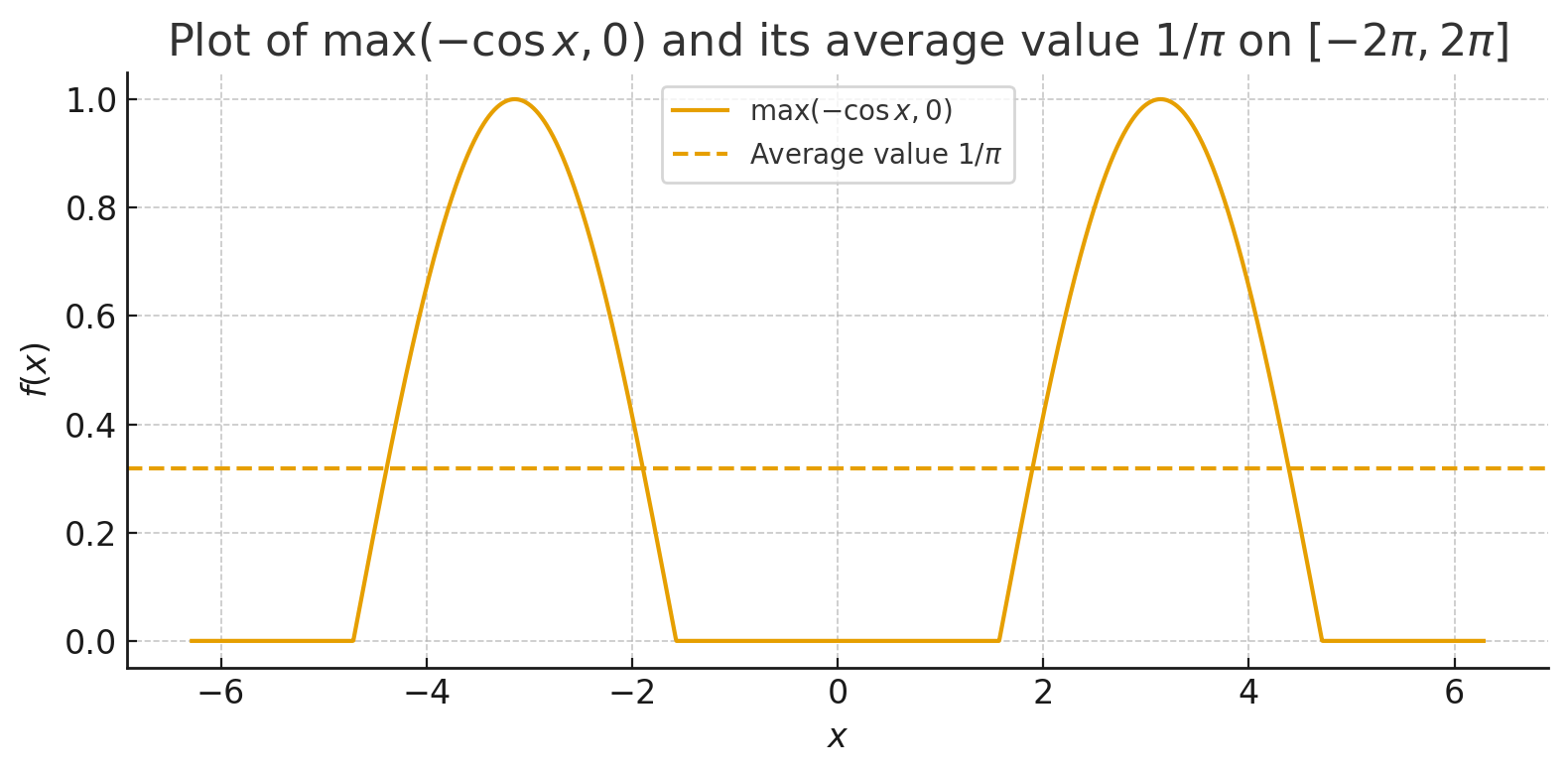}
\caption{An illustration (generated by ChatGPT Pro) of the easily verified fact that the average value of the $2\pi$-periodic function $\cos_-(x) = \max(-\cos x, 0)$ is $\frac{1}{\pi}$.  The regions where $\cos_-$ is positive correspond, roughly speaking, to how $E_1$ is situated within $E_2$ in, say, Figure \ref{fig:lemniscates}.}
\label{fig:cos}
\end{figure}

\begin{remark} Another way to heuristically justify the link between $\ell(\partial E_1(p) \cap \Omega)$ and $\Psi(E_2(p) \cap \Omega)$ in Theorem \ref{stokes} as follows.  Suppose we work in a region $\Omega$ where the functions $\psi$, $p$ are roughly constant: $\psi(z) \approx \psi_0$, $p(z) \approx c_0$.  For simplicity let us normalize $\psi_0$ to be real, and $c_0$ to be negative real.  In order to have a non-trivial lemniscate in this region, $c_0$ should be close to $-1$, in which case we expect
    \begin{equation}\label{psio}
        \Psi(E_2(p) \cap \Omega) \approx \frac{\psi_0}{\pi} |\Omega|.
    \end{equation}
  Because the unit circle $\partial D(0,1)$ is tangent to the line $\{ w: \Re w = -1\}$ at $-1$, the lemniscate condition $|p(z)|=1$ is then heuristically approximated by the condition that $\Re (p(z) + 1) = 0$.  On the other hand, the hypothesis $\psi(z) \approx \psi_0$ and \eqref{psi-def} suggests that $p'(z) \approx A e^{\psi_0 z}$ for some amplitude $z$, which heuristically integrates to $p(z) \approx c_0 + \frac{A}{\psi_0} e^{\psi_0 z}$.  Writing $\frac{A}{\psi_0}$ in polar coordinates as $Re^{i\theta}$ and $z$ in Cartesian coordinates as $x+iy$, the condition $\Re(p(z)+1)=0$ can then be rearranged after some algebra as
$$ \cos (\psi_0 y + \theta) \approx -\frac{1+c_0}{Re^{\psi_0 x}}.$$
If the right-hand side is much larger than $1$ in magnitude, we thus expect the lemniscate to be empty in this region; but if instead the right-hand side is much less than $1$ in magnitude, we expect the lemniscate to behave like a periodic sequence of horizontal lines of spacing $\frac{\pi}{\psi_0}$.  Thus, the total length of the lemniscate in this region $\Omega$ would be expected to not exceed $\frac{\psi_0}{\pi} |\Omega|$ up to lower order errors; comparing this with \eqref{psio} we arrive at a heuristic justification of \eqref{stokes-eq}.
\end{remark}

In practice, the various error terms $X_1,\dots,X_5$ can by controlled (for suitable choices of $\lambda$ and $\Omega$) by a variety of tools, including classical area inequalities (see Section \ref{area-sec}), rearrangement inequalities (see Section \ref{riesz-sec}), and partial fraction decompositions (see Section \ref{disp-sec}). It is an easy matter to obtain Theorem \ref{main-thm}(i) from Theorem \ref{stokes} and the aforementioned tools to control error terms; this can be viewed as an optimized version of the arguments in \cite{fryntov} (who, as mentioned previously, already identified the $O(\sqrt{n})$, as a natural barrier to their method). We carry out these arguments in Section \ref{main-i-sec}.

To proceed further, we analyze the defect in various inequalities used in the proof of Theorem \ref{main-thm}(i).  It turns out that the most fruitful component inequality to analyze in this fashion is the triangle inequality \eqref{triangle}.  One can use a defect version of this inequality (see Section \ref{defect-sec}) to control the dispersion $\|p\|_1$ in terms of the lemniscate length $\ell(\partial E_1(p))$, thus partially justifying Heuristic \ref{main-heuristic}(i).  This control on the dispersion in turn allows for refined estimates for the error terms $X_1,\dots,X_5$, which ultimately leads to the improvement Theorem \ref{main-thm}(ii).  We carry out these arguments in Section \ref{main-ii-sec}.

To push beyond the $O(1)$ error term in Theorem \ref{main-thm}(ii) requires further analysis.  The previous arguments allow one to show that in all remaining cases, the dispersion $\|p\|_1$ is bounded; elementary arguments (combined with another application of Theorem \ref{stokes}) also permit one to show that the origin repulsion $\|p\|_0$ is also bounded; see Section \ref{elem-sec}.  This permits for further elementary pointwise estimates on $p$ and $p'$ (again, see Section \ref{elem-sec}) that are quite precise (especially in the interior portion of the disk $D(0,1)$), and allow for very accurate estimation of the lemniscate length $\ell(\partial E_1(p))$, both in interior, intermediate, and outer portions of that disk by direct arclength calculations (and some tools for counting zeroes of polynomials, such as Bezout's theorem and Rouche's theorem).  This eventually allows one to reduce the $O(1)$ losses in lemniscate length estimates to $O(\eps)$ for any fixed $\eps>0$, leading to the proof of Theorem \ref{main-thm}(iii). We carry out these arguments in Section \ref{main-iii-sec}.

The final part (iv) of Theorem \ref{main-thm} is achieved by a similar strategy.  Firstly, the analysis used to establish part (iii) is pushed further to show that the total size $\|p\|$ is not merely bounded, but is in fact small.  A finer inspection of the various defect inequalities available then shows that one can in fact improve the upper bound on lemniscate length by a small multiple of $\|p\|$, as per Heuristic \ref{main-heuristic}; there are some losses incurred when doing so, but one can contain these losses to be of size $O(\|p\|/C_0)$ or better for a large constant $C_0$, which lets us show that $\|p\|$ is not only small, but in fact vanishes.  We carry out these arguments in Section \ref{main-iv-sec}.

\subsection{Organization of the paper}

The proofs of the four components of Theorem \ref{main-thm} share many common ingredients.  In order to keep the paper at a manageable length, we have therefore structured the paper to first introduce some general classes of tools for controlling various aspects of the geometry of lemniscates, and then use these tools to establish each of the four components of Theorem \ref{main-thm} in turn. More precisely:

\begin{itemize}
    \item In Section \ref{notation-sec} we lay out our basic notation and easy observations.
    \item In Section \ref{arclength-sec} we establish some formulae for the arclength of a lemniscate, and use this to obtain precise control on the standard lemniscate $\partial E_1(p_0)$.
    \item In Section \ref{area-sec} we establish some area theorems that give reasonable control on the ``roundness'' of regions such as $E_2(p)$ or $E_4(p)$, as well as some preliminary $L^2$ control on $p'$.
    \item In Section \ref{riesz-sec} we use rearrangement inequalities to control Riesz-type potentials, which are particularly useful to control the various expressions appearing in Theorem \ref{stokes}.
    \item In Section \ref{elem-sec} we establish some elementary pointwise estimates on $p$ and $p'$ that are particularly useful in the regime when $\|p\|$ is small.
    \item In Section \ref{defect-sec} we investigate the defect in the triangle inequality \eqref{triangle}, and use this to obtain upper and lower bounds on $\Psi(E)$ for various sets $E$.
    \item In Section \ref{stokes-sec} we establish the crucial estimate, Theorem \ref{stokes}, through an application of Stokes' theorem combined with an integration by parts argument.
    \item In Section \ref{main-i-sec} we use the tools from the previous sections (particularly those from Sections \ref{area-sec}, \ref{riesz-sec}, \ref{stokes-sec}) to give a very short proof of Theorem \ref{main-thm}(i).
    \item In Section \ref{main-ii-sec} we use the tools from the previous sections (particularly those from Sections \ref{area-sec}, \ref{riesz-sec}, \ref{defect-sec}, \ref{stokes-sec}) to obtain the improved bound of Theorem \ref{main-thm}(ii).
    \item In Section \ref{main-iii-sec} we use the tools from all of the previous sections to give a (significantly more complicated) proof of the further improvement in Theorem \ref{main-thm}(iii).
    \item Finally, in Section \ref{main-iv-sec} we use the tools from all of the previous sections (and a repetition of the arguments in Section \ref{main-iii-sec}) to prove the final result, Theorem \ref{main-thm}(iv).
\end{itemize}

It is possible that the proofs of Theorem \ref{main-thm}(iii), (iv) could be merged together; however the combined proof would then be extremely complicated, and so we have elected to prove them separately, even if this causes some of the arguments to be repeated.

\subsection{Acknowledgments}

The author learned about this problem from the Erd\H{o}s problem site \cite{bloom}.

Various AI tools, including Gemini, Gemini DeepResearch, ChatGPT Pro, and AlphaEvolve, were used to perform literature searches and provide numerical experiments, proofs and verifications of individual claims in the paper, and initial code for the figures used.  However, the final arguments in this paper are human-generated.

The author was supported by the James and Carol Collins Chair, the Mathematical Analysis \& Application Research Fund, and by NSF grant DMS-2347850, and is particularly grateful to recent donors to the Research Fund.

\section{Notation}\label{notation-sec}

\subsection{Asymptotic notation}

We use the following asymptotic notation conventions:
\begin{itemize}
    \item We use $X \ll Y$, $Y \gg X$, or $X = O(Y)$ to denote an estimate of the form estimate $|X| \leq C Y$ for some absolute constant $C>0$.  If this implied constant $C$ needs to depend on additional parameters, we denote this by subscripts; for instance $X \ll_\eps Y$ denotes an estimate of the form $|X| \leq C_\eps Y$ where $C_\eps$ can depend on $\eps$.
    \item We use $X \asymp Y$ as an abbreviation for $X \ll Y \ll X$.
    \item The degree $n$ is viewed as an asymptotic parameter going to infinity, and we will always assume $n>2$ to avoid some minor degeneracies arising from the $n=1,2$ cases.  We use $X = o(Y)$ to denote a bound of the form $|X| \leq c(n) Y$ where $c(n) \to 0$ as $n \to \infty$.  Here we allow the decay rate $c(n)$ to depend on other parameters such as $\eps$ that are declared to be fixed as $n \to \infty$.
\end{itemize}

\subsection{The geometry of sets in the plane}

For any complex number $z_0$ and radius $r>0$, we use
\begin{align*}
D(z_0,r) &\coloneqq \{ z \in \C: |z-z_0| < r \}\\
\overline{D(z_0,r)} &\coloneqq \{ z \in \C: |z-z_0| \leq r \}\\
\partial D(z_0,r) &\coloneqq \{ z \in \C: |z-z_0| = r \}
\end{align*}
to denote the open disk, closed disk, and circle respectively of radius $r$ centered at $z_0$.  For $r_1 < r_2$, we also define the annulus
$$ \Ann(z_0,r_1,r_2) \coloneqq D(z_0,r_2) \backslash D(z_0,r_1) = \{ z \in \C: r_1 \leq |z-z_0| < r_2 \}.$$

If $E$ is a measurable subset of $\C$, we use $|E| = \int_E\ dA$ to denote its area, and $1_E$ to denote its indicator function, thus $1_E(z)$ equals $1$ when $z \in E$ and $0$ otherwise.  We use
\begin{equation}\label{rE-def}
    \r[E] \coloneqq (|E|/\pi)^{1/2}
\end{equation}
to denote the \emph{area-equivalent radius} of a disk with the same area as $E$, thus
\begin{equation}\label{E-area}
    |E| = \pi \r[E]^2 = |D(z_0,\r[E])|
\end{equation}
for all $z_0 \in \C$.  This radius $\r[E]$ will be a usefully normalized way of measuring the area of $E$.  For instance, the P\'olya inequality \eqref{polya} can be rewritten as
\begin{equation}\label{polya-r}
    \r[E_r(p)] \leq \r[E_r(z^n)] = r^{1/n}.
\end{equation}

If $A$ is a finite set, we use $\# A$ to denote its cardinality.

Given a non-empty multiset $\{z_1,\dots,z_n\}$ of complex numbers $z_1,\dots,z_n$, we define their \emph{mean}
$$ \bar{z} \coloneqq \frac{1}{n} \sum_{i=1}^n z_i$$
and \emph{$\ell^1$ dispersion}
\begin{equation}\label{disp-def}
    \Disp \{ z_1,\dots,z_n \} \coloneqq \sum_{i=1}^n |z_i - \bar{z}|
\end{equation}
This will be our primary way of measuring how ``concentrated'' a set of complex numbers are around their mean.
Observe from the triangle inequality that
\begin{equation}\label{disp-upper}
 |z_0 - \bar{z}| \leq \frac{1}{n} \sum_{i=1}^n |z_0 - z_i| \leq |z_0 - \bar{z}| + \frac{1}{n} \Disp \{z_1,\dots,z_n\}
\end{equation}
for any complex number $z_0$. Hence by further application of the triangle inequality, we have
\begin{equation}\label{disp}
 \Disp \{ z_1,\dots,z_n\} \leq n |z_0 - \bar{z}| + \sum_{i=1}^n |z_i - z_0| \leq 2 \sum_{i=1}^n |z_i - z_0|.
\end{equation}
Obviously, this implies that
\begin{equation}\label{disp-sum}
 \Disp \{ z_1,\dots,z_n\} \leq \frac{2}{\# A} \sum_{\xi \in A} \sum_{i=1}^n |z_i - \xi|
\end{equation}
for any finite non-empty set $A$ of complex numbers.

\subsection{Complex differentiation}

If $z$ is a non-zero complex number, we use $\log z \in \C / 2\pi i\Z$ to denote the (multi-valued) complex logarithm of $z$, and $\arg z = \Im \log z \in \R/2\pi\Z$ to denote the (multi-valued) argument, thus
$$ z = |z| e^{i \arg z}.$$
We use $\partial$ to denote the usual Wirtinger derivative
$$ \partial \coloneqq \frac{1}{2} \frac{\partial}{\partial x} - \frac{i}{2} \frac{\partial}{\partial y}.$$
Thus, for any meromorphic function $f$ and any smooth function $\eta \colon \R \to \R$, one sees from the Cauchy--Riemann equations and the usual laws of differentiation that
\begin{align}
    \partial f &= f' \label{wirt-1}\\
    \partial \overline{f} &= 0 \label{wirt-2}\\
    \partial (|f|^2) = \partial (\overline{f} f) &= \overline{f} f' \label{wirt-3}\\
    \partial |f| = \frac{\partial (|f|^2)}{2|f|} &= \frac{\overline{f} f'}{2|f|} \label{wirt-4}\\
    \partial \eta(|f|) &= \eta'(|f|) \frac{\overline{f} f'}{2|f|} \label{wirt-5} \\
    \partial \frac{|f|}{f} &= - \frac{|f|}{f} \frac{f'}{2f} \label{wirt-6} \\
    \partial \log f &= \frac{f'}{f} \label{wirt-7}\\
    \partial \overline{\log f} &= 0 \label{wirt-8}\\
    \partial \arg f = \partial \frac{\log f - \overline{\log f}}{2i} &= \frac{1}{2i} \frac{f'}{f} \label{wirt-9}
\end{align}
away from the poles and zeroes of $f$, and choosing suitable branches of the logarithm and argument as needed.

\subsection{Critical points and partial fraction expansions}\label{disp-sec}

Let $p$ be a normalized maximizing polynomial of degree $n$. Throughout this paper, summations or set builder notation over $\zeta$ (or $\zeta'$) are assumed to range over the $n-1$ critical points $p'(\zeta)=0$ of $p$, counting multiplicity.  Thus for example $\{\zeta\}$ is the multiset of critical points, while $\{\zeta: \zeta \in D(z_0,r)\}$ denotes the multiset of critical points in the disk $D(z_0,r)$.  Since $p$ has vanishing $z^{n-1}$ coefficient, $p'$ has vanishing $z^{n-2}$ coefficients, hence from \eqref{pp-factor} the critical points $\zeta$ have mean zero:
\begin{equation}\label{zeta-mean}
\bar \zeta \coloneqq \frac{1}{n-1} \sum_\zeta \zeta = 0.
\end{equation}
In particular, comparing \eqref{disp-def} and \eqref{semi-norm} we have
\begin{equation}\label{disp-zeta}
    \Disp \{\zeta\} = \sum_\zeta |\zeta| = \|p\|_1.
\end{equation}
We also record a variant:

\begin{lemma}[Splitting the dispersion]\label{disp-split} If $r \geq 2\|p\|_1/n$, then
\begin{equation}\label{disp-zeta-r}
    \|p\|_1 \ll \sum_{\zeta: \zeta \notin D(0,r)} |\zeta| + n \Disp \{\zeta: \zeta \in D(0,r)\}.
\end{equation}
\end{lemma}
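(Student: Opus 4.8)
The plan is to split the $n-1$ critical points into the inner multiset $A \coloneqq \{\zeta : \zeta \in D(0,r)\}$ and the outer multiset $B \coloneqq \{\zeta : \zeta \notin D(0,r)\}$, write $\|p\|_1 = \sum_{\zeta \in A}|\zeta| + \sum_{\zeta \in B}|\zeta|$ using \eqref{disp-zeta}, and bound each piece separately. The contribution $\sum_{\zeta \in B}|\zeta|$ is already one of the two terms on the right-hand side of \eqref{disp-zeta-r}, so the whole game is to control $\sum_{\zeta \in A}|\zeta|$ by $n\,\Disp\{\zeta \in A\}$ plus a further multiple of $\sum_{\zeta \in B}|\zeta|$.

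First I would use the hypothesis $r \geq 2\|p\|_1/n$ to guarantee that $A$ is non-empty, so that $\Disp\{\zeta \in A\}$ is meaningful: Markov's inequality \eqref{markov} gives $\#B \leq \|p\|_1/r \leq n/2$, whence $\#A = (n-1)-\#B \gg n$ exactly as in \eqref{markov-many} (note $D(0,r) \supseteq D(0,2\|p\|_1/n)$). Set $k \coloneqq \#A$ and let $\bar\zeta_A \coloneqq \frac1k\sum_{\zeta \in A}\zeta$ be the mean of the inner critical points. The crucial input is the vanishing of the $z^{n-1}$-coefficient, i.e.\ the mean-zero identity \eqref{zeta-mean}: since $\sum_{\zeta \in A}\zeta + \sum_{\zeta \in B}\zeta = 0$, we obtain $k\bar\zeta_A = -\sum_{\zeta \in B}\zeta$ and hence $k|\bar\zeta_A| \leq \sum_{\zeta \in B}|\zeta|$. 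Feeding this into the triangle inequality $|\zeta| \leq |\zeta - \bar\zeta_A| + |\bar\zeta_A|$ summed over $\zeta \in A$ yields
$$ \sum_{\zeta \in A}|\zeta| \;\leq\; \Disp\{\zeta \in A\} + k|\bar\zeta_A| \;\leq\; \Disp\{\zeta \in A\} + \sum_{\zeta \in B}|\zeta|, $$
and therefore $\|p\|_1 \leq \Disp\{\zeta \in A\} + 2\sum_{\zeta \in B}|\zeta| \leq n\,\Disp\{\zeta \in A\} + 2\sum_{\zeta \notin D(0,r)}|\zeta|$, which is \eqref{disp-zeta-r} with absolute constant $2$.

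I do not expect any genuine obstacle here: the statement is a soft consequence of \eqref{zeta-mean} together with two applications of the triangle inequality, and the factor $n$ in the claimed bound is extremely lossy — any constant $\geq 1$ in front of $\Disp\{\zeta \in A\}$ would do. The only point that requires care is the degenerate case $A = \emptyset$, which is precisely what the hypothesis $r \geq 2\|p\|_1/n$ (via \eqref{markov}) serves to rule out; were one content to interpret the dispersion of the empty multiset as $0$, the inequality would in fact hold for every $r > 0$.
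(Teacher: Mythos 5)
Your proposal is correct and follows essentially the same route as the paper: both use the mean-zero identity \eqref{zeta-mean} to bound the mean of the inner critical points by $\frac{1}{\#A}\sum_{\zeta \notin D(0,r)}|\zeta|$, use Markov's inequality \eqref{markov-many} to ensure $\#A \gg n$, and then apply the triangle inequality about that mean (the paper cites \eqref{disp-upper}, which is the same computation you carry out by hand). Your explicit constant $2$ and the remark that the factor $n$ is lossy are consistent with the paper's argument.
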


\begin{proof}  From \eqref{zeta-mean} and the triangle inequality we have
$$ \left|\sum_{\zeta: \zeta \in D(0,r)} \zeta\right| = \left|\sum_{\zeta: \zeta \notin D(0,r)} \zeta\right| \leq \sum_{\zeta: \zeta \notin D(0,r)} |\zeta|,$$
while from \eqref{markov-many} we have $\gg n$ critical points $\zeta$ in $D(0,r)$.  Hence the mean of these critical points is $O( \sum_{\zeta: \zeta \notin D(0,r)} |\zeta| / n )$.  From \eqref{disp-upper} we conclude that
$$ \sum_{\zeta: \zeta \in D(0,r)} |\zeta| \ll \sum_{\zeta: \zeta \notin D(0,r)} |\zeta| + \Disp \{\zeta: \zeta \in D(0,r)\},$$
and the claim follows from \eqref{semi-norm}.
\end{proof}

Similarly to \eqref{psi-expand}, we also have
\begin{equation}\label{phi-expand}
    \varphi(z) = \sum_{p(\eta)=0} \frac{1}{z-\eta}
\end{equation}
where $\eta$ ranges over the $n$ zeroes of $p$, counting multiplicity, as well as the variant formulae
\begin{equation}\label{log-phi-expand}
    \frac{\varphi'(z)}{\varphi(z)} = \psi(z) - \phi(z) = \sum_\zeta \frac{1}{z-\zeta} - \sum_{p(\eta)=0} \frac{1}{z-\eta}
\end{equation}
and
\begin{equation}\label{log-psi-expand}
    \frac{\psi'(z)}{\psi(z)} = \sum_{p''(\xi)=0} \frac{1}{z-\xi} - \sum_{\zeta} \frac{1}{z-\zeta}
\end{equation}
where $\xi$ ranges over the $n-2$ roots of $p''$, counting multiplicity.

\subsection{Conjugation}\label{conj-sec}

Given a polynomial $p$ of degree $n$, we define its \emph{conjugate} $\tilde p$ to be the polynomial
\begin{equation}\label{tildep-def}
    \tilde p(z) \coloneqq \overline{p(\overline{z})},
\end{equation}
that is to say the polynomial formed by applying complex conjugation to each of the coefficients of $p$.  Note that if $z$ is on the circle $\partial D(0,r)$ for some $r>0$, we have $\overline{z} = r^2/z$, and hence the condition $|p(z)|=1$ is equivalent on this circle to the degree $2n$ equation
\begin{equation}\label{tilde-r}
z^n p(z) \tilde p(r^2/z) - z^n = 0.
\end{equation}
In particular, we see that the lemniscate $\partial E_1(p)$ either intersects the circle $\partial D(0,r)$ in at most $2n$ points, or contains the entire circle; this can also be seen from Bezout's theorem, and was also noted explicitly in \cite[Lemma 1]{eremenko}.

\section{Arclength formulae}\label{arclength-sec}

One obvious way to compute the length of lemniscates $\ell(\partial E_1(p))$ is by parameterizing the curves that comprise the lemniscate.  A potential difficulty is caused by critical points $\zeta$, around which the lemniscate may self-intersect or have cusps.  However, provided one restricts to open sets that avoid the critical points, we have explicit formulae\footnote{See \cite[Theorem 1]{kosukhin} for a version of \eqref{arcl-1} that can handle critical points within $\Omega$.}:

\begin{lemma}[Arclength formulae]\label{arclength}  Let $p$ be a non-zero polynomial, and let $\Omega$ be a semialgebraic open set that does not contain any critical points of $p$ in its closure.
\begin{itemize}
    \item[(i)] (First arclength formuula) We have have the arclength formula
\begin{equation}\label{arcl-1}
 \ell(\partial E_1(p) \cap \Omega) = \int_{-\pi}^\pi \sum_{z \in \Omega: p(z) = e^{i\alpha}} \frac{1}{|p'(z)|}\ d\alpha.
\end{equation}
Note that the sum over $z$ is finite by the fundamental theorem of algebra, and is multiplicity-free since $\Omega$ avoids critical points.
\item[(ii)] (Second arclength formula)  If in addition $\Omega$ avoids the origin $0$, and we have the transversality condition
\begin{equation}\label{radial}
    \arg \frac{p(z)}{zp'(z)} \neq 0 \pmod \pi
\end{equation}
for all $z \in \partial E_1(p) \cap \Omega$,
then we have
\begin{equation}\label{arcl-2}
 \ell(\partial E_1(p) \cap \Omega) = \int_0^\infty \sum_{z \in \partial E_1(p) \cap \Omega \cap \partial D(0,r)} \frac{1}{\left|\sin \arg \frac{p(z)}{zp'(z)}\right|} \ dr,
\end{equation}
where the sum over $z$ is again finite and multiplicity-free. In particular, we have the lower bound
$$ \ell(\partial E_1(p) \cap \Omega) \geq \int_0^\infty \# (\partial E_1(p) \cap \Omega \cap \partial D(0,r)) \ dr.$$
\end{itemize}
\end{lemma}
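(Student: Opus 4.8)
The plan is to realize the lemniscate $L \coloneqq \partial E_1(p) \cap \Omega$ as a finite union of smooth curves and compute its length by a change-of-variables (coarea-type) argument, using the angular coordinate $\arg p(z)$ for part (i) and the radial coordinate $|z|$ for part (ii). The computations involved are elementary; essentially all the content lies in setting up the correct parametrization and invoking the semialgebraic structure to keep everything finite.

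First I would record the structural fact underlying both parts. Since $|p(z)| \to \infty$ as $|z| \to \infty$, the set $L$ is bounded, and since $p$ is a polynomial, $L$ is semialgebraic of dimension at most one, hence by the semialgebraic stratification theorem \cite{bochnak} a finite disjoint union of points and smooth arcs. On $L$ the real gradient of $|p|^2 - 1$ has magnitude $2|p(z)||p'(z)| = 2|p'(z)|$, which is nonzero precisely because $\overline{\Omega}$ contains no critical point of $p$; this excludes the isolated points and shows $L$ is a genuine one-dimensional submanifold, each component of which is an open arc or a closed loop. Parametrizing a component by arclength as $\gamma(t)$ and writing $p(\gamma(t)) = e^{i\alpha(t)}$ for a smooth real branch $\alpha$, differentiation gives $p'(\gamma(t))\gamma'(t) = i\alpha'(t) e^{i\alpha(t)}$, hence $|\alpha'(t)| = |p'(\gamma(t))|$; since $p'$ is nonvanishing on $\overline{\Omega}$, the branch $\alpha$ is strictly monotonic along each component.

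For part (i) I would change variables from arclength $t$ to $\alpha$ on each component, which replaces $\int_\gamma dt$ by $\int \frac{d\alpha}{|p'(\gamma(t(\alpha)))|}$; on a closed loop one keeps track of the (nonzero) winding number of $\alpha$ so that each value $e^{i\alpha}$ is counted with the correct multiplicity, and on an open arc one exhausts by compact subarcs. Summing over all components and grouping the contributions by the value $e^{i\alpha}$ then yields \eqref{arcl-1}, the inner sum being finite and multiplicity-free by the fundamental theorem of algebra and the absence of critical points in $\Omega$. For part (ii) I would instead work with $r(t) \coloneqq |\gamma(t)|$: substituting $\gamma'(t) = i\alpha'(t) p(\gamma(t))/p'(\gamma(t))$ into $\frac{d}{dt}\log\gamma(t) = \gamma'(t)/\gamma(t)$, taking real parts, and simplifying using $|p(\gamma(t))| = 1$ and $|p'(\gamma(t))| = |\alpha'(t)|$, one obtains $|r'(t)| = \bigl|\sin\arg\frac{p(\gamma(t))}{\gamma(t)p'(\gamma(t))}\bigr|$ (a well-defined quantity, since $z \neq 0$ on $\Omega$, $p'$ is nonvanishing on $\overline{\Omega}$, and $p$ is nonvanishing on the lemniscate). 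The transversality hypothesis \eqref{radial} forces this to be nonzero everywhere on $L$, so $r$ is strictly monotonic along each component --- in particular $L$ has no closed-loop component, as such a loop would have an extremum of $|z|$ --- and changing variables from $t$ to $r$ on each component, then summing over components and over the points of $\partial E_1(p) \cap \Omega \cap \partial D(0,r)$, gives \eqref{arcl-2}. The stated lower bound is then immediate from $|\sin(\cdot)| \leq 1$, noting that the transversality hypothesis also prevents $L$ from being tangent to, let alone containing an arc of, any circle $\partial D(0,r)$, so $\partial E_1(p) \cap \Omega \cap \partial D(0,r)$ is finite for every $r > 0$.

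I expect the only genuine difficulty to be bookkeeping rather than mathematics: correctly accounting for multiplicities when a component of $L$ in part (i) is a closed loop on which $\arg p$ winds several times, and handling components that are open arcs whose closure meets $\partial \Omega$ by a routine exhaustion by compact subarcs. The remaining ingredients --- the manifold structure of $L$, the two one-line differential identities $|\alpha'| = |p'|$ and $|r'| = |\sin\arg\frac{p}{zp'}|$, and the finiteness of the various point sets involved --- are each either a direct computation or an immediate consequence of semialgebraicity.
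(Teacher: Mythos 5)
Your proposal is correct and follows essentially the same route as the paper: decompose $\partial E_1(p)\cap\Omega$ into finitely many smooth arcs via semialgebraicity and the absence of critical points, then change variables to $\alpha=\arg p(z)$ for (i) (with Jacobian $|p'|$) and to $r=|z|$ for (ii) (with Jacobian $|\sin\arg\frac{p}{zp'}|$, which the transversality hypothesis keeps nonzero). The only cosmetic difference is that you derive the two Jacobian identities from an arclength parametrization, whereas the paper parametrizes directly by $\alpha$ (via the local diffeomorphism $p$) and by $r$ (via the inverse function theorem in polar coordinates); the computations are equivalent.
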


In practice\footnote{In principle, one could use other one-dimensional arclength formulae as well, such as the Crofton formula; while versions of this formula have been useful in other literature on this problem (see, e.g., \cite[\S 5]{fryntov}), we were unable to make good use of it here.  On the other hand, we will certainly make use of two-dimensional arclength formulae arising from Stokes' theorem; see Section \ref{stokes-sec}.}, the first arclength formula \eqref{arcl-1} will be useful to control outer regions $\partial E_1(p) \cap \Ann(0,r_+,1+\sigma)$ of the lemniscate when $\|p\|$ is small and $r_+, 1+\sigma$ are reasonably close to $1$, while the second arclength formula \eqref{arcl-2} is useful to control intermediate regions $\partial E_1(p) \cap \Ann(r_-, r_+)$ of the lemniscate when $\|p\|$ is small and $r_-, r_+$ are bounded away from both $0$ and $1$, though in both cases one should remove exceptional sets corresponding to regions that are too close to a critical point.  For future reference we observe that
\begin{equation}\label{cosec-asym}
\frac{1}{\left|\sin \arg \frac{p(z)}{zp'(z)}\right|} = 1 + O\left( \dist\left(\arg \frac{p(z)}{zp'(z)},\frac{\pi}{2} + \pi \Z \right)^2 \right)
\end{equation}
if $\arg \frac{p(z)}{zp'(z)}$ is bounded away from $\pi \Z$.

\begin{proof} By the hypotheses on $\Omega$, the partial lemniscate $\partial E_1(p) \cap \Omega$ is smooth, semi-algebraic, and one-dimensional, and is thus the union of a finite number $\gamma_1,\dots,\gamma_m$ of smooth curves.  The polynomial $p$ maps each curve $\gamma_j$ to the unit circle by a local diffeomorphism; by decomposing the curves (and removing some endpoints), we may assume that each curve maps to an arc $\{ e^{i\alpha}: \alpha \in I_j \}$ for some open interval $I_j \subset (-\pi,\pi)$.  Making the change of variables $p(z) = e^{i\alpha}$, we obtain
$$ \ell(\gamma_j) = \int_{I_j} \frac{1}{|p'(z)|}\ d\alpha,$$
and (i) follows by summing over $j$.

Now we prove (ii). If $z \in \gamma_j$ is non-zero and $v$ is a tangent vector to $\gamma_j$ at $z$, then $p'(z) v$ must be tangent to the unit circle at $p(z)$, thus $\frac{p'(z) v}{p(z)}$ is imaginary.  Assuming \eqref{radial}, this shows that the tangent vector $v$ cannot be orthogonal to $z$.  By the inverse function theorem, we can therefore parameterize $\gamma_j$ in polar coordinates as $z = r e^{i\theta_j(r)}$ for some smooth function $\theta_j \colon J_j \to \R$ on some open interval $J_j \subset (0,\infty)$.  Differentiating the identity
$$ |p(r e^{i\theta_j(r)})|^2 = 1$$
in $r$, we see that
$$ \Re\left[ \overline{p}(re^{i\theta_j(r)}) p'(re^{i\theta_j}(r)) (1 + i r \theta'_j(r)) e^{i\theta_j(r)} \right] = 0$$
which on taking arguments and writing $z = re^{i\theta_j(r)}$ gives
$$ \arctan(r \theta'_j(r)) - \arg \frac{p(z)}{z p'(z)} = \pm \frac{\pi}{2} \pmod{\pi},$$
and hence
$$ \theta'_j(r) = \left(r \tan \arg \frac{p(z)}{z p'(z)}\right)^{-1}.$$
Reparameterizing $\gamma_j$ by $r$, we conclude that
\begin{align*}
     \ell(\gamma_j) &= \int_{J_j} |1 + ir \theta'_j(r)|\ dr\\
     &= \int_{J_j} \frac{1}{\left|\sin \arg \frac{p(z)}{z p'(z)}\right|} \ dr,
\end{align*}
and the claim (ii) follows by summing over $j$.
\end{proof}

We illustrate these formulae in the model case $p=p_0(z) = z^n - 1$.

\begin{lemma}[The lemniscate for $p_0$]\label{out}  We have
\begin{equation}\label{lbo-ident}
 \ell(\partial E_1(p_0)) = \int_{-\pi}^\pi |1+e^{i\alpha}|^{-\frac{n-1}{n}}\ d\alpha = B\left(\frac{1}{2},\frac{1}{2n}\right)
\end{equation}
and
\begin{equation}\label{lbo-in} \ell(\partial E_1(p_0) \cap D(0,r_0)) = 2nr_0 + O(r_0^{2n+1})
\end{equation}
for any $0 < r_0 \leq 1$.  In particular,
\begin{equation}\label{lout}
    \ell(\partial E_1(p_0) \backslash D(0,r_0)) = \int_{I_{r_0}} |1 + e^{i\alpha}|^{-\frac{n-1}{n}} \ d\alpha = \ell(\partial E_1(p_0)) - 2nr_0 + O( r_0^{2n+1})
\end{equation}
where $I_{r_0}$ is the set of all $-\pi < \alpha < \pi$ with $|1+e^{i\alpha}|^{1/n} \geq r_0$.
\end{lemma}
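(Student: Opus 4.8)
The plan is to reduce all three assertions to the first arclength formula \eqref{arcl-1}, applied on annuli that carefully avoid the unique critical point of $p_0$. Since $p_0'(z) = nz^{n-1}$, the polynomial $p_0$ has a single critical point, $z = 0$, of multiplicity $n-1$; as $|p_0(0)| = 1$ this point lies \emph{on} the lemniscate, which is the only genuine source of difficulty. Also, $|z^n - 1| = 1$ forces $|z| \leq 2^{1/n} < 2$, so $\partial E_1(p_0) \subset D(0,2)$.

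First I would fix $0 < \eps < R$ and apply \eqref{arcl-1} to the semialgebraic open set $\Omega = \Ann(0,\eps,R)$, whose closure avoids $0$. For $\alpha \neq \pm\pi$, the equation $p_0(z) = e^{i\alpha}$, that is $z^n = 1 + e^{i\alpha}$, has exactly $n$ solutions, all of modulus $\rho_\alpha \coloneqq |1+e^{i\alpha}|^{1/n}$, at each of which $|p_0'(z)| = n\rho_\alpha^{n-1} = n|1+e^{i\alpha}|^{(n-1)/n}$. Hence the inner sum in \eqref{arcl-1} equals $|1+e^{i\alpha}|^{-(n-1)/n}$ precisely when $\eps^n \leq |1+e^{i\alpha}| < R^n$ and vanishes otherwise, giving
\[ \ell(\partial E_1(p_0) \cap \Ann(0,\eps,R)) = \int_{\{\alpha \in (-\pi,\pi)\,:\, \eps^n \le |1+e^{i\alpha}| < R^n\}} |1+e^{i\alpha}|^{-\frac{n-1}{n}}\, d\alpha. \]
Specializing $R = 2$ and letting $\eps \to 0$ — the left side increasing to $\ell(\partial E_1(p_0))$ and the right side increasing, by monotone convergence and the integrability of $|1+e^{i\alpha}|^{-(n-1)/n}$ near $\pm\pi$ (the exponent $(n-1)/n$ is $<1$), to $\int_{-\pi}^\pi |1+e^{i\alpha}|^{-(n-1)/n}\, d\alpha$ — proves the first equality in \eqref{lbo-ident}. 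For the Beta-function value I would substitute $|1+e^{i\alpha}| = 2|\cos(\alpha/2)|$ and $\beta = \alpha/2$, reducing to $2^{1/n} \cdot 2\int_0^{\pi/2} (\cos\beta)^{-(n-1)/n}\, d\beta$, and apply the classical identity $\int_0^{\pi/2}(\cos\beta)^s\, d\beta = \tfrac12 B(\tfrac{s+1}{2},\tfrac12)$ with $s = -\tfrac{n-1}{n}$ (so $\tfrac{s+1}{2} = \tfrac{1}{2n}$), obtaining the value $2^{1/n} B(\tfrac12,\tfrac1{2n})$ of \eqref{lbo}.

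For \eqref{lbo-in} I would instead take $R = r_0 \le 1$ and let $\eps \to 0$, which gives $\ell(\partial E_1(p_0) \cap D(0,r_0)) = \int_{(-\pi,\pi)\setminus I_{r_0}} |1+e^{i\alpha}|^{-(n-1)/n}\, d\alpha$. Since $r_0 \le 1$, the constraint $2|\cos(\alpha/2)| = |1+e^{i\alpha}| < r_0^n$ confines $\alpha$ to small neighbourhoods of $\pm\pi$; writing $\alpha = \pi - 2t$ near $\alpha = \pi$ turns it into $0 < t < t_0 \coloneqq \arcsin(r_0^n/2)$ with $|1+e^{i\alpha}| = 2\sin t$, and by the symmetry $\alpha \mapsto -\alpha$ the two ends contribute equally, so
\[ \ell(\partial E_1(p_0) \cap D(0,r_0)) = 4 \int_0^{t_0} (2\sin t)^{-\frac{n-1}{n}}\, dt = 2 \cdot 2^{1/n} \int_0^{t_0} (\sin t)^{-\frac{n-1}{n}}\, dt. \]
Then $\sin t = t(1 + O(t^2))$ uniformly for $t \in (0,t_0)$, and since the exponent lies in $[0,1)$ one has $(\sin t)^{-(n-1)/n} = t^{-(n-1)/n}(1 + O(t^2))$ with an \emph{absolute} implied constant, whence $\int_0^{t_0}(\sin t)^{-(n-1)/n}\, dt = n\, t_0^{1/n} + O(t_0^{2+1/n})$. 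Finally $t_0 = \arcsin(r_0^n/2) = \tfrac{r_0^n}{2}(1 + O(r_0^{2n}))$, so $t_0^{1/n} = 2^{-1/n} r_0 (1 + O(r_0^{2n}/n)) = 2^{-1/n} r_0 + O(r_0^{2n+1}/n)$ — the factor $1/n$ here precisely cancels the leading $n$ — while $t_0 \le r_0^n$ gives $t_0^{2+1/n} \le r_0^{2n+1}$; assembling these yields $\ell(\partial E_1(p_0) \cap D(0,r_0)) = 2nr_0 + O(r_0^{2n+1})$.

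The last assertion \eqref{lout} is then essentially free. Its first equality is the $\eps = r_0$, $R = 2$ instance of the displayed identity above (no limiting is needed, since $\eps = r_0 > 0$ already excludes the critical point and $\partial E_1(p_0) \subset D(0,2)$ with $r_0^n \le 1 < 2^n$). For the second equality, I would note that $\partial E_1(p_0) \cap \partial D(0,r_0)$ is finite — at most $2n$ points, by Bezout or \eqref{tilde-r} — hence of zero arclength, so that $\ell(\partial E_1(p_0)) = \ell(\partial E_1(p_0) \cap D(0,r_0)) + \ell(\partial E_1(p_0) \setminus D(0,r_0))$, and subtract \eqref{lbo-in}. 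The one genuinely delicate point in the whole argument is the one flagged in the proof of \eqref{lbo-in}: one must track the $n$-dependence carefully enough to see that, after multiplying the expansion of $t_0^{1/n}$ by $n$, the error remains a \emph{uniform} $O(r_0^{2n+1})$ and does not degrade to $O(n\, r_0^{2n+1})$. Everything else is routine, aside from the minor bookkeeping needed to exhaust past the critical point sitting at the origin.
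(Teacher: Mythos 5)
Your proposal is correct, and it diverges from the paper's proof in one substantive place: the middle claim \eqref{lbo-in}. The paper proves \eqref{lbo-in} with the \emph{second} arclength formula \eqref{arcl-2}, parameterizing the inner part of the lemniscate radially, verifying the transversality condition via $zp_0'(z) = n(1+p_0(z))$, showing that $\partial E_1(p_0)$ meets each circle $\partial D(0,r)$ with $r<r_0$ in exactly $2n$ points, and then integrating $1+O(r^{2n})$ in $r$ using \eqref{cosec-asym}. You instead stay entirely within the \emph{first} arclength formula \eqref{arcl-1}, obtaining $\ell(\partial E_1(p_0)\cap D(0,r_0))$ as the Beta-type integral over $(-\pi,\pi)\setminus I_{r_0}$ and extracting the asymptotics $2nr_0+O(r_0^{2n+1})$ by a direct endpoint expansion of $\int_0^{t_0}(\sin t)^{-(n-1)/n}\,dt$. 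Your route avoids the transversality/intersection-counting discussion but shifts the burden onto the uniform-in-$n$ bookkeeping of the expansion; I checked your error terms ($nt_0^{1/n}$ producing $2^{-1/n}nr_0 + O(r_0^{2n+1})$ after the $1/n$ in the exponent cancels the prefactor $n$, and $t_0^{2+1/n}\le r_0^{2n+1}$) and they are right. It also has the aesthetic advantage that the first equality of \eqref{lout} and the identity for \eqref{lbo-in} come from a single displayed formula on annuli, so all three claims are instances of one computation. Two minor remarks: the paper's $\Ann(0,\eps,R)$ is half-open rather than open, so one should formally use the open annulus (the discrepancy is a finite, hence null, set of $\alpha$); and the value you derive, $2^{1/n}B(\tfrac12,\tfrac1{2n})$, agrees with \eqref{lbo} and with the paper's own proof, whereas the second equality in \eqref{lbo-ident} as printed omits the factor $2^{1/n}$ --- that is a typo in the statement, not a defect of your argument.
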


\begin{proof}  Substituting $p = p_0$ into \eqref{arcl-1} with $\Omega = \C \backslash D(0,\eps)$, and then sending $\eps \to 0$, we obtain
$$ \ell(\partial E_1(p_0)) = \int_{-\pi}^\pi \sum_{z: z^n = 1+e^{i\alpha}} \frac{1}{n|z|^{n-1}}\ d\alpha.$$
For each $\alpha \in (-\pi,\pi)$, there are $n$ roots to the equation $z^n = 1+e^{i\alpha}$, each with $|z|^{n-1} = |1+e^{i\alpha}|^{\frac{n-1}{n}}$.  This gives the first part of \eqref{lbo-ident}.  For the second part, we make the change of variables $t = \alpha/2$ and note that $|1+e^{i\alpha}| = 2 \cos t$, and use symmetry, to write
$$ \int_{-\pi}^\pi |1+e^{i\alpha}|^{-\frac{n-1}{n}}\ d\alpha = 2^{\frac{n+1}{n}} \int_0^{\pi/2} (\cos t)^{-\frac{n-1}{n}}\ dt.$$
The claim \eqref{lbo-ident} now follows from the standard beta function identity
$$ B(p,q) = 2 \int_0^{\pi/2} (\sin t)^{2p-1} (\cos t)^{2q-1}\ dt$$
for $p,q > 0$.

To prove \eqref{lbo-in}, we use \eqref{arcl-2} with $\Omega = D(0,r_0) \backslash D(0,\eps)$ (and then send $\eps \to 0$ as before).  If $z \in \partial D(0,r)$ for some $0 < r < r_0$ non-zero with $p_0(z) = e^{i\alpha}$, then from the identity $z p'_0(z) = n(1 + p_0(z))$ we have
$$ \arg \frac{p_0(z)}{zp_0'(z)} = \arg \frac{e^{i\alpha}}{1 + e^{i\alpha}} = \frac{\alpha}{2} \mod \pi$$
ensuring \eqref{arcl-2} (since $r < r_0 \leq 1$ forces $\alpha$ bounded away from zero); then by \eqref{cosec-asym} we have
$$ \frac{1}{\left|\sin \arg \frac{p_0(z)}{zp_0'(z)}\right|} = 1 + O((\pi - |\alpha|)^2) = 1 + O(|1 + e^{i\alpha}|^2) = 1 + O(r^{2n}).$$
From \eqref{arcl-2} and a perturbative analysis near the origin we also see that $\partial E_1(p_0) \cap D(0,r_0)$ consists of $2n$ disjoint curves emenating from the origin to the boundary $\partial D(0,r_0)$ that intersect each intermediate circle $\partial D(0,r)$ transversally; thus $\partial E_1$ intersects $\partial D(0,r)$ in exactly $2n$ points for $0 < r < r_0$.  We conclude from \eqref{arcl-2} that
$$
 \ell(\partial E_1(p) \cap \Omega) = \int_0^{r_0} n \left( 1 + O( r^{2n} ) \right)\ dr$$
 and the claim \eqref{lbo-in}  follows.  Finally, the first identity in \eqref{lout} follows from a further appeal to \eqref{arcl-1}, and the second identity then follows by subtracting \eqref{lbo-in} from \eqref{lbo-ident}.
\end{proof}

\section{Area theorems}\label{area-sec}

In this section we record some consequences of the classical area theorems of complex analysis for a normalized maximizing polynomial $p$ of degree $n$.  These theorems will not be strong in the regime where $\|p\|$ is bounded or small, but are valid even when $\|p\|$ is large, and will be useful for controlling various error terms, as well as placing regions such as $E_2$ or $E_4$ in a slight enlargement of the standard unit disk $D(0,1)$.

For any $R>0$, the map $p$ is an $n$-to-one holomorphic map from $E_R(p)$ onto $D(0,R)$, and hence by the change of variables formula
\begin{equation}\label{area-form}
    \int_{E_R(p)} |p'|^2\ dA = n \int_{D(0,R)}\ dA = \pi n R^2.
\end{equation}

By definition of normalized maximizing polynomial, the complement of $\overline{E_1(p)}$ in the Riemann sphere $\C \cup \{\infty\}$ is simply connected, and hence by the Riemann mapping theorem (and the Schwartz lemma) there exists a unique conformal map with a Laurent series expansion
$$ z = a_{-1} w + a_0 + \frac{a_1}{w} + \frac{a_2}{w^2} + \dots$$
from $\C \backslash \overline{D(0,1)}$ to $\C \backslash \overline{E_1(p)}$, with $a_{-1}$ a positive real (the logarithmic capacity of $E_1(p)$).  Indeed, this map must equal the inverse of a branch of $p(z)^{1/n}$ on $\C \backslash E_1(p)$.  Since $p$ is normalized, we have $p(z)^{1/n} = z + O(1/z)$ as $|z| \to \infty$, and hence $a_{-1}=1$ and $a_0=0$.  We conclude the identity
\begin{equation}\label{p-power}
 p\left( w + \sum_{k=1}^\infty \frac{a_k}{w^k} \right)^n = w
\end{equation}
for all $w \in \C \backslash \overline{E_1(p)}$.  In particular, for $R>1$, $E_R(p)$ is connected, whose boundary curve $\partial E_R(p)$ is simple, smooth, and parameterized by the formula
\begin{equation}\label{param}
\partial E_R(p) = \left\{ R^{1/n} e^{i\theta} + \sum_{k=1}^\infty \frac{a_k}{R^{k/n} e^{in\theta}}: 0 \leq \theta\leq 2\pi\right\}.
\end{equation}
A standard application of Stokes' theorem then gives the familiar Gr\"onwall area formula
\begin{equation}\label{area-formula}
    |E_R| = \pi R^{2/n} - \pi \sum_{k=1}^\infty \frac{k|a_k|^2}{R^{2k/n}}
\end{equation}
for $R>1$, which is of course consistent with \eqref{polya}, but describes precisely the defect in this inequality.  From \eqref{rE-def} we thus have
\begin{equation}\label{area-formula-alt}
    \r[E_R] = \left(R^{2/n} - \sum_{k=1}^\infty \frac{k |a_k|^2}{R^{2k/n}}\right)^{1/2}
\end{equation}
or equivalently
\begin{equation}\label{area-formula-2}
\sum_{k=1}^\infty \frac{k |a_k|^2}{R^{2k/n}} = R^{2/n} - \r[E_R]^2.
\end{equation}
We also obtain the arclength formula
\begin{equation}\label{arc}
\ell(\partial E_R(p)) = \int_0^{2\pi} \left| R^{1/n} i e^{i\theta} - \sum_{k=1}^\infty \frac{ik a_k}{R^{k/n} e^{ik\theta}} \right|\ d\theta,
\end{equation}
By Cauchy--Schwarz and Plancherel we thus have
\begin{equation}\label{arc-bound}
\begin{split}
    \ell(\partial E_R(p)) &\leq \left( 2\pi \int_0^{2\pi} \left| R^{1/n} i e^{i\theta} - \sum_{k=1}^\infty \frac{ik a_k}{R^{k/n} e^{ik\theta}} \right|^2\ d\theta\right)^{1/2} \\
    &= 2\pi \left( R^{2/n} + \sum_k \frac{k^2 |a_k|^2}{R^{2k/n}} \right)^{1/2}.
\end{split}
\end{equation}
In practice, this will mean that $\partial E_R(p)$ will be considerably shorter than $\partial E_1(p)$ for say $R=4$. Indeed, $E_R(p)$ typically becomes significantly ``rounder'' and more disk-like as $R$ increases: see Figure \ref{fig:lemniscates}.

\begin{figure}[t]
\centering
\includegraphics[width=5in]{./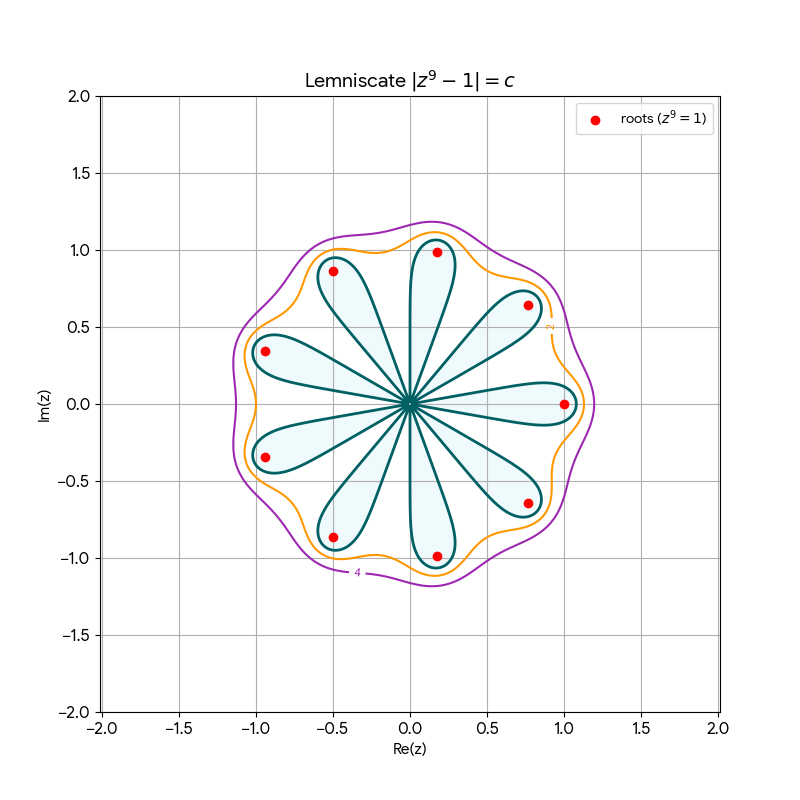}
\caption{The lemniscates $\partial E_R(p_0)$ for $n=9$ and $R=1,2,4$.  Note how much closer $E_2(p_0)$ and $E_4(p_0)$ are to disks than $E_1(p_0)$.}
\label{fig:lemniscates}
\end{figure}

From \eqref{param}, the triangle inequality, and Rouche's theorem we also obtain the inclusions
\begin{equation}\label{inclusions}
    D\left(0, R^{1/n} - \sum_{k=1}^\infty \frac{|a_k|}{R^{k/n}}\right) \subset E_R(p) \subset \overline{D\left(0, R^{1/n} + \sum_{k=1}^\infty \frac{|a_k|}{R^{k/n}}\right)}
\end{equation}
(with the former inclusion vacuous if the radius is zero or negative).  In particular, since the lemniscate contains all the critical points, one has
\begin{equation}\label{critical}
    |\zeta| \leq R^{1/n} + \sum_{k=1}^\infty \frac{|a_k|}{R^{k/n}}
\end{equation}
for all critical points $\zeta$ (this can also be obtained from the Gauss--Lucas theorem).

\section{Riesz potentials}\label{riesz-sec}

Given a bounded measurable set $E \subset \C$, define the \emph{Riesz potential} $I_1 1_E \colon \C \to \R$ to be the function
$$ I_1 1_E(z_0) \coloneqq \int_{E} \frac{1}{|z-z_0|}\ dA(z)$$
where we recall that  $dA$ is area measure.  In this section we record various basic properties of this potential.

The Hardy-Littlewood rearrangement inequality \cite[Theorems 368--370, 378]{hl} implies that for any bounded measurable set $E$, one has
\begin{equation}\label{riesz-bound}
I_1 1_E(z_0) \leq I_1 1_{D(0, \r[E])}(0) = 2 \pi \r[E]
\end{equation}
for all $z_0$, where $\r[E]$ was defined in \eqref{rE-def}.  In particular, for any complex numbers $\zeta_1,\dots,\zeta_m$, one has
\begin{equation}\label{multip}
     \int_E \sum_{j=1}^m \frac{1}{|z-\zeta_j|}\ dA(z) \leq 2 \pi m \r[E]
\end{equation}
for all $E$; applying this to a superlevel set $E = \{ z:\sum_{j=1}^m \frac{1}{|z-\zeta_j|} \geq \lambda \}$ and rearranging, one obtains the distributional bound
\begin{equation}\label{distrib}
 \left|\left\{ z :\sum_{j=1}^m \frac{1}{|z-\zeta_j|} \geq \lambda \right\}\right| \leq \frac{4\pi m^2}{\lambda^2}
\end{equation}
for any $\lambda>0$.  (The constant $4\pi$ here can likely be improved, but we will not need to do so here.)

\begin{remark}  One can obtain defect versions of the inequalities \eqref{riesz-bound}, \eqref{multip}, \eqref{distrib}.  In the case of \eqref{multip}, for instance, it is possible to extract additional gains proportional to the $\ell^2$ dispersion $\|p\|_2$ (assuming $p$ is normalized).  However, as discussed in Remark \ref{l2}, any control on $\ell^2$ dispersion $\|p\|_2$ obtained by such methods will be inferior to control on $\ell^1$ dispersion $\|p\|_1$, which as we shall see in Section \ref{defect-sec} can be extracted from defect inequalities for the triangle inequality \eqref{triangle}.
\end{remark}




\section{Elementary bounds}\label{elem-sec}

Let $p$ be a monic polynomial of degree $n$ obeying \eqref{zeta-mean}.  In this section we collect some elementary estimates on $p$ in terms of the critical points $\zeta$.  It will be useful to introduce the normalized distance function
\begin{equation}\label{delta-def}
    \delta(z) \coloneqq \frac{\min_\zeta |z-\zeta|}{|z|}
\end{equation}
to the critical points, for any $z \neq 0$.  Note from \eqref{markov-many} and the triangle inequality that
\begin{equation}\label{delta-upper}
    \delta(z) \ll 1
\end{equation}
whenever $|z| \gg \|p\|_1/n$.

Observe that if the dispersion $\|p\|_1 = \sum_\zeta |\zeta|$ vanishes, then
$$ p'(z) = nz^{n-1}$$
and hence also
$$ p(z) = z^n + p(0).$$
In particular, $p$ is nearly constant near the origin.  The following proposition shows that this behavior persists when the dispersion is merely small, rather than vanishing.

\begin{proposition}[Near-constancy near origin]\label{pvar}  Let $z \in \C$.
    \begin{itemize}
        \item[(i)]  If $z \neq 0$, we have the upper bound
\begin{equation}\label{p'-bound}
    |p'(z)| \leq n |z|^{n-1} e^{O(\min(\|p\|_1/|z|, \|p\|_1^2/|z|^2))}
\end{equation}
and the lower bound
\begin{equation}\label{p'-bound-lower}
    |p'(z)| \geq n |z|^{n-1} e^{-O(\min(\|p\|_1/|z|, \|p\|_1^2/|z|^2))} \delta(z)^{O(\|p\|_1/|z|)}.
\end{equation}
and
\begin{equation}\label{poz}
     p(z) = p(0) + O(|z|^n e^{O(\min(\|p\|_1/|z|, \|p\|_1^2/|z|^2))}).
\end{equation}
If $|z| > 2\|p\|_1$, the $\delta(z)^{O(\|p\|_1/|z|)}$ factor in \eqref{p'-bound-lower} can be omitted.

\item[(ii)]  If $z = O(\|p\|_1/n)$, then
\begin{equation}\label{ppcl}
     |p'(z)| \leq e^{O(n)} n^{-n} \|p\|_1^{n-1}
\end{equation}
and
\begin{equation}\label{pocl}
     p(z) = p(0) + O(e^{O(n)} n^{-n} \|p\|_1^n).
\end{equation}
\end{itemize}
\end{proposition}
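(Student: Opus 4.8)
The plan is to exploit the factorization $p'(z)=n\prod_\zeta(z-\zeta)$, which gives $|p'(z)|=n|z|^{n-1}\prod_\zeta|1-\zeta/z|$, so that \eqref{p'-bound} and \eqref{p'-bound-lower} are equivalent to two-sided estimates for $\sum_\zeta\log|1-\zeta/z|=\log\frac{|p'(z)|}{n|z|^{n-1}}$. For the upper bound I would first use the crude inequality $\log|1-\zeta/z|\le\log(1+|\zeta|/|z|)\le|\zeta|/|z|$ and sum over $\zeta$ to get $\sum_\zeta\log|1-\zeta/z|\le\|p\|_1/|z|$; this already yields the factor $e^{O(\|p\|_1/|z|)}$, which suffices whenever $|z|\lesssim\|p\|_1$. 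When $|z|\ge2\|p\|_1$ every $|\zeta|/|z|\le1/2$, so one can instead Taylor expand $\log(1-w)=-w+O(|w|^2)$, kill the linear term using the mean-zero relation $\sum_\zeta\zeta=0$ from \eqref{zeta-mean}, and bound the quadratic remainder by $\sum_\zeta|\zeta|^2/|z|^2\le\|p\|_1^2/|z|^2$ via \eqref{l1-l2}; taking real parts gives the sharper factor $e^{O(\|p\|_1^2/|z|^2)}$. Combining the two regimes produces $e^{O(\min(\|p\|_1/|z|,\|p\|_1^2/|z|^2))}$, establishing \eqref{p'-bound}.

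For the lower bound \eqref{p'-bound-lower}, the same Taylor computation applies verbatim when $|z|>2\|p\|_1$ and needs no $\delta(z)$ factor (consistent with the final sentence of part (i)). For $|z|\le2\|p\|_1$ I would split the critical points: those with $|\zeta|\le|z|/2$ satisfy $|\log|1-\zeta/z||\le2|\zeta|/|z|$ and so contribute at least $-2\|p\|_1/|z|$ in total, while those with $|\zeta|>|z|/2$ number at most $2\|p\|_1/|z|$ by Markov's inequality \eqref{markov}, and each contributes at least $\log\frac{|z-\zeta|}{|z|}\ge\log\delta(z)$, which is exactly the source of the $\delta(z)^{O(\|p\|_1/|z|)}$ loss. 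Here the $\delta(z)$ factor only represents a genuine loss when $z$ is close to a critical point, and for $|z|>2\|p\|_1$ every critical point lies in $D(0,|z|/2)$ so it can be dropped.

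For \eqref{poz} I would not integrate \eqref{p'-bound} directly, since $\int_0^{|z|}t^{n-1}e^{O(\|p\|_1/t)}\,dt$ diverges at the origin. Instead, writing $p(z)-p(0)=z\int_0^1 p'(tz)\,dt$ and $|p'(tz)|\le n\prod_\zeta(t|z|+|\zeta|)$ gives, after substituting $\sigma=t|z|$, the majorization $|p(z)-p(0)|\le n\int_0^{|z|}\prod_\zeta(\sigma+|\zeta|)\,d\sigma$; the arithmetic--geometric mean inequality $\prod_\zeta(\sigma+|\zeta|)\le(\sigma+\|p\|_1/(n-1))^{n-1}$ then yields $|p(z)-p(0)|\le(|z|+\|p\|_1/(n-1))^n\le|z|^n e^{2\|p\|_1/|z|}$, which is the claim for $|z|<2\|p\|_1$ (where $\min(\|p\|_1/|z|,\|p\|_1^2/|z|^2)\asymp\|p\|_1/|z|$). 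For $|z|\ge2\|p\|_1$ this majorant gives only the weaker exponent, so I would pass instead to the explicit coefficient expansion: from $p'(z)=n\prod_\zeta(z-\zeta)$ one has $p(z)-p(0)=z^n+\sum_{k=1}^{n-2}\frac{n}{k}(-1)^{n-k}e_{n-k}(\zeta)z^k$, where $e_j$ denotes the $j$-th elementary symmetric function of the critical points, the $z^{n-1}$ term vanishing precisely because $\sum_\zeta\zeta=0$; bounding $|e_{n-k}(\zeta)|\le e_{n-k}(|\zeta|)\le\|p\|_1^{n-k}/(n-k)!$ and using $\|p\|_1\le|z|/2$ gives $|p(z)-p(0)-z^n|=O(\|p\|_1^2|z|^{n-2})$, i.e. $p(z)=p(0)+z^n(1+O(\|p\|_1^2/|z|^2))$, slightly more than \eqref{poz} demands. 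Part (ii) is then immediate from the same arithmetic--geometric mean bounds with $|z|=O(\|p\|_1/n)$: one gets $|p'(z)|=n\prod_\zeta|z-\zeta|\le n(|z|+\|p\|_1/(n-1))^{n-1}\le e^{O(n)}n^{-n}\|p\|_1^{n-1}$, and similarly $|p(z)-p(0)|\le(|z|+\|p\|_1/(n-1))^n\le e^{O(n)}n^{-n}\|p\|_1^n$.

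The step I expect to be the main obstacle is the sharp form of \eqref{poz} for large $|z|$: the exponential upper bound \eqref{p'-bound} is too lossy near the origin to integrate, and splitting the contour at radius $\asymp\|p\|_1$ leaves an inner contribution of size $\asymp(2\|p\|_1)^n$ that swamps the target error $O(\|p\|_1^2|z|^{n-2})$ as soon as $|z|\gg\|p\|_1$; this is what forces one to argue at the level of the coefficients $\frac{n}{k}e_{n-k}(\zeta)$ and to use the cancellation encoded in $\sum_\zeta\zeta=0$, rather than via a purely contour-integral estimate. A secondary point of care is simply keeping track of which of the two competing exponents $\|p\|_1/|z|$ and $\|p\|_1^2/|z|^2$ is the relevant one in each range of $|z|$, and noting that the $\delta(z)$ factor in \eqref{p'-bound-lower} is only active in the regime $|z|\le2\|p\|_1$.
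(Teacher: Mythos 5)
Your argument is correct, and for the two-sided bounds on $|p'|$ it coincides in substance with the paper's: the paper packages the cancellation of the linear term as the Weierstrass-type factorization $|p'(z)| = n|z|^{n-1}\prod_\zeta |(1-\zeta/z)e^{\zeta/z}|$ together with the global bound $|(1-w)e^w|\le e^{O(|w|^2)}$, which is exactly your Taylor expansion plus $\sum_\zeta\zeta=0$; your restriction of the quadratic regime to $|z|\ge 2\|p\|_1$ is harmless since the two exponents are comparable for $\|p\|_1\le|z|\le 2\|p\|_1$. Where you genuinely diverge is in \eqref{poz} and in part (ii). The paper proves \eqref{poz} by a monotonicity/differential-inequality argument: it shows the radial derivative of $|p(z)-p(0)| - C|z|^n e^{C\|p\|_1/|z|}$ is negative for $|z|\ge 2C\|p\|_1/n$ (using \eqref{p'-bound}), integrates outward from the regime covered by part (ii), and then repeats with the exponent $C'\|p\|_1^2/|z|^2$ for $|z|\ge 3C'\|p\|_1$; part (ii) itself is obtained from \eqref{p'-bound} on the circle $|z|\asymp\|p\|_1/n$ plus the maximum principle. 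You instead majorize $|p'(tz)|\le n\prod_\zeta(t|z|+|\zeta|)$, apply AM--GM, and integrate (which handles both the small-$|z|$ case of \eqref{poz} and all of part (ii) in one stroke), and for $|z|\ge 2\|p\|_1$ you pass to the elementary symmetric functions $e_j(\zeta)$, use $|e_j|\le\|p\|_1^j/j!$ and the vanishing of $e_1$ to get the sharper $p(z)=p(0)+z^n(1+O(\|p\|_1^2/|z|^2))$. Your route is arguably more elementary and yields a slightly stronger conclusion in the outer regime; the paper's differential-inequality device is the more flexible template (it reappears in spirit elsewhere in the paper), but nothing in your version fails, and your diagnosis of why naive integration of \eqref{p'-bound} breaks down at the origin matches the reason the paper starts its integration at radius $\asymp\|p\|_1/n$.
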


\begin{proof}  We begin with (i).  From \eqref{pp-factor} we can write
$$ |p'(z)| = n |z|^{n-1} \prod_\zeta \left| 1 - \frac{\zeta}{z} \right|.$$
In the spirit of the Weierstrass factorization theorem, we can also use \eqref{zeta-mean} to obtain the alternate factorization
$$ |p'(z)| = n |z|^{n-1} \prod_\zeta \left| \left( 1 - \frac{\zeta}{z} \right) e^{\zeta/z}  \right|.$$
From Taylor expansion we have the upper bounds
$$\left| 1 - \frac{\zeta}{z} \right| \leq e^{|\zeta|/|z|}$$
and
$$\left| \left( 1 - \frac{\zeta}{z} \right) e^{\zeta/z} \right| \leq e^{O(|\zeta|^2/|z|^2)};$$
inserting these bounds and using \eqref{l1-l2} we obtain \eqref{p'-bound}.  Similarly, for $\zeta \in D(0,|z|/2)$, we have the lower bounds
$$\left| 1 - \frac{\zeta}{z} \right| \geq e^{-O(|\zeta|/|z|)}$$
and
$$\left| \left( 1 - \frac{\zeta}{z} \right) e^{\zeta/z} \right| \geq e^{-O(|\zeta|^2/|z|^2)}$$
while for $\zeta \notin D(0,|z|/2)$, we have the variant lower bounds
$$\left| 1 - \frac{\zeta}{z} \right| \geq \delta(z) e^{-O(|\zeta|/|z|)}$$
and
$$\left| \left( 1 - \frac{\zeta}{z} \right) e^{\zeta/z} \right| \geq \delta(z) e^{-O(|\zeta|^2/|z|^2)}.$$
By \eqref{markov}, there are only $O(\|p\|_1/|z|)$ values of $\zeta$ outside of $D(0,|z|/2)$ (and none at all if $|z| > 2 \|p\|_1$), so inserting these lower bounds into the two factorizations of $|p'(z)|$ gives \eqref{p'-bound-lower}.

We skip \eqref{poz} for now and turn to (ii).  For $|z| \asymp \|p\|_1/n$ the claim \eqref{ppcl} comes from \eqref{p'-bound}; the case $|z| \ll \|p\|_1/n$ then follows from the maximum principle.  To obtain \eqref{pocl}, we integrate \eqref{ppcl}.
Finally we now turn to \eqref{poz}.  Let $C$ be a sufficiently large constant.  For $|z| \leq 2C\|p\|_1/n$, the claim follows from (ii) (after adjusting constants), so we may assume $|z| \geq 2C\|p\|_1/n$. Consider the quantity
$$ |p(z)-p(0)| - C |z|^n e^{C\|p\|_1/|z|}.$$
The radial derivative of this quantity is at most
$$ |p'(z)| - C n |z|^{n-1} e^{C\|p\|_1/|z|} + C^2 \|p\|_1|z|^{n-2} e^{C\|p\|_1/|z|}.$$
By \eqref{p'-bound}, this derivative is negative for $|z| \geq 2C\|p\|_1/n$ if $C$ is large enough.  Integrating and using (ii) we conclude that
$$ |p(z)-p(0)| - C |z|^n e^{C\|p\|_1/|z|} \ll_C e^{O_C(n)} n^{-n} \|p\|_1^n$$
for $|z| \geq 2C\|p\|_1/n$.  This gives the bound
$$ p(z) = p(0) + O(|z|^n e^{O(\|p\|_1/|z|)})$$
(after adjusting constants).  Thus, if we let $C'$ be another large constant, we have obtained \eqref{poz} when $|z| \leq 3C' \|p\|_1$.  To handle the case $|z| \geq 3C'\|p\|_1$, we work with the
$$ |p(z)-p(0)| - C' |z|^n e^{C'\|p\|_1^2/|z|^2},$$
whose radial derivative is at most
$$ |p'(z)| - C' n |z|^{n-1} e^{C'\|p\|_1^2/|z|^2} + 2(C')^2 \|p\|_1|z|^{n-2} e^{C'\|p\|_1^2/|z|^2}.$$
Again, \eqref{p'-bound} ensures that this derivative is negative for $|z| \geq 3C'\|p\|_1$ if $C'$ is large enough.  Integrating and using the previous case, we obtain the $|z| \geq 3C' \|p\|_1$ case of \eqref{poz}.
\end{proof}

\begin{corollary}[Near constancy near origin, II]\label{pvar-cor} We have the estimates
    \begin{equation}\label{pin}
 p(z) = -1 + O(e^{O(n)} n^{-n} \|p\|^n)
\end{equation}
for $z = O(\|p\|/n)$ and
\begin{equation}\label{pin-2}
    p(z) = -1 + O(|z|^n e^{O(\min(\|p\|/|z|,\|p\|^2/|z|^2))})
\end{equation}
for $z \neq 0$.
\end{corollary}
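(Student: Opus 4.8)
The plan is to deduce Corollary \ref{pvar-cor} directly from Proposition \ref{pvar}, together with two trivial observations: that $\|p\|_1 \le \|p\|$ and $\|p\|_0 \le \|p\|$ (from \eqref{sdef}), and that the constant term $p(0)$ is itself close to $-1$. For the latter, the definition \eqref{origin-repulse} of the origin repulsion gives $|1+p(0)| = n^{-n}\|p\|_0^n \le n^{-n}\|p\|^n$, hence
\[ p(0) = -1 + O(n^{-n}\|p\|^n). \]
This is the only point at which $p(0)$ intervenes; the remaining work is to propagate the estimates \eqref{poz} and \eqref{pocl} and absorb this error into them.

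For \eqref{pin-2} I would start from \eqref{poz}. Since $\|p\|_1 \le \|p\|$, we have $\min(\|p\|_1/|z|,\|p\|_1^2/|z|^2) \le \min(\|p\|/|z|,\|p\|^2/|z|^2)$, so \eqref{poz} together with the bound on $p(0)$ yields
\[ p(z) = -1 + O\left(n^{-n}\|p\|^n\right) + O\left(|z|^n e^{O(\min(\|p\|/|z|,\|p\|^2/|z|^2))}\right). \]
It remains to check that the first error term is absorbed by the second, i.e. that $(\|p\|/(n|z|))^n \le e^{O(\min(\|p\|/|z|,\|p\|^2/|z|^2))}$. When $|z| \ge \|p\|/n$ the left-hand side is at most $1$; when $|z| < \|p\|/n$ one has $\|p\|/|z| > n \ge 1$, so $\min(\|p\|/|z|,\|p\|^2/|z|^2) = \|p\|/|z|$, and writing $s = \|p\|/(n|z|) > 1$ the required bound becomes $n\log s \le O(ns)$, which follows from $\log s \le s$. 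This absorption inequality is the only step that is not pure bookkeeping, and even it is elementary.

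For \eqref{pin} I would split the range $z = O(\|p\|/n)$ according to the size of $|z|$ relative to $\|p\|_1/n$. If $|z| = O(\|p\|_1/n)$, then \eqref{pocl} gives $p(z) = p(0) + O(e^{O(n)}n^{-n}\|p\|_1^n)$, and we conclude using $\|p\|_1 \le \|p\|$ and the displayed bound on $p(0)$. If instead $\|p\|_1/n \ll |z| = O(\|p\|/n)$, we apply \eqref{poz}: here $\|p\|_1/|z| \ll n$ so the exponential factor is $e^{O(n)}$, while $|z| = O(\|p\|/n)$ forces $|z|^n \le e^{O(n)} n^{-n}\|p\|^n$ (the implied constant raised to the power $n$ contributes only an $e^{O(n)}$ factor), giving $p(z) = p(0) + O(e^{O(n)}n^{-n}\|p\|^n)$, and hence \eqref{pin} via the bound on $p(0)$. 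These two ranges exhaust all $z = O(\|p\|/n)$ — the case $z = 0$ being the displayed bound on $p(0)$ itself — so the proof is complete. I do not expect a genuine obstacle here; the only point demanding a moment's care is the absorption inequality used for \eqref{pin-2}.
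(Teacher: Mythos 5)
Your proposal is correct and follows exactly the paper's route: the paper's own proof is the two-line observation that $p(0) = -1 + O(n^{-n}\|p\|^n)$ by \eqref{origin-repulse} and \eqref{size-def}, that $\|p\|_1 \le \|p\|$, and that the claims then follow from Proposition \ref{pvar} and the triangle inequality. You have merely spelled out the absorption of the $n^{-n}\|p\|^n$ error and the case split that the paper leaves implicit, and those verifications are all sound.
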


\begin{proof}  From \eqref{origin-repulse}, \eqref{size-def} one has $p(0) = -1 + O(n^{-n} \|p\|^n)$ and $\|p\|_1 \leq \|p\|$.  The claims now follow from Proposition \ref{pvar} and the triangle inequality.
\end{proof}

The next bound explains why we view $\|p\|_0$ as a measure of origin repulsion.

\begin{lemma}[Origin repulsion]\label{origin-repulsion}  We have
$$ \|p\|_0 \ll \| p \|_1 + n \operatorname{dist}(0, \partial E_1(p)).$$
In particular, by \eqref{size-def}, one has
$$ \|p\| \ll \| p \|_1 + n \operatorname{dist}(0, \partial E_1(p)).$$
\end{lemma}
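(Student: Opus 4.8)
The plan is to exhibit a point $z_*$ of the lemniscate $\partial E_1(p)$ realizing the distance $d \coloneqq \dist(0,\partial E_1(p))$ to the origin, and then use the near-constancy of $p$ near the origin (Proposition \ref{pvar}) to transport the normalization $|p(z_*)|=1$ back to $z=0$; since $p(0)$ is a non-positive real, this pins down $|1+p(0)|$, and hence $\|p\|_0 = n|1+p(0)|^{1/n}$.

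First I would dispose of the degenerate case $d=0$: then $0$ lies on the closed lemniscate, so $|p(0)|=1$, and as $p(0)\le 0$ is real this forces $p(0)=-1$, giving $\|p\|_0=0$. So assume $d>0$. The lemniscate is nonempty (as $p$ is non-constant) and bounded (as $|p(z)|\to\infty$ when $|z|\to\infty$), hence compact, so the infimum defining $d$ is attained at some $z_*$ with $|z_*|=d$ and $|p(z_*)|=1$. Since $p(0)\le 0$ is real, the reverse triangle inequality gives
\begin{equation*}
 n^{-n}\|p\|_0^n = |1+p(0)| = \bigl|\,1-|p(0)|\,\bigr| = \bigl|\,|p(z_*)|-|p(0)|\,\bigr| \le |p(z_*)-p(0)|,
\end{equation*}
so everything reduces to bounding $|p(z_*)-p(0)|$ and taking an $n$-th root.

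I would then split according to the size of $d$. If $d\le \|p\|_1/n$, then $z_*=O(\|p\|_1/n)$, so \eqref{pocl} yields $|p(z_*)-p(0)| \ll e^{O(n)} n^{-n}\|p\|_1^n$; taking $n$-th roots (where the factor $e^{O(n)}$ contributes only $e^{O(1)}$) gives $\|p\|_0 \ll \|p\|_1$. If instead $d > \|p\|_1/n$, then $\|p\|_1/d < n$, hence $\min(\|p\|_1/d,\|p\|_1^2/d^2) \le \|p\|_1/d < n$, and \eqref{poz} applied at $z_*$ gives $|p(z_*)-p(0)| \ll d^n e^{O(n)}$; taking $n$-th roots gives $\|p\|_0 \ll nd$. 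In both cases $\|p\|_0 \ll \|p\|_1 + nd$, proving the lemma; the ``in particular'' statement then follows from $\|p\| = \|p\|_1+\|p\|_0$. The main subtlety---and essentially the only place any care is needed---is the choice of which near-constancy estimate to apply: \eqref{poz} is useless for small $d$, since its error term $d^n e^{O(\|p\|_1/d)}$ in fact diverges as $d\to 0$, so the $d$-uniform bound \eqref{pocl} must be used precisely when $d \lesssim \|p\|_1/n$; in the complementary range the exponent in \eqref{poz} is only $O(n)$, which disappears harmlessly after extracting the $n$-th root. Everything else is routine bookkeeping of constants.
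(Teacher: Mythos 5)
Your proposal is correct and follows essentially the same route as the paper: both arguments take the closest lemniscate point $z_*$ to the origin, reduce to lower-bounding $|p(z_*)-p(0)|$ by $n^{-n}\|p\|_0^n$ (your reverse-triangle-inequality step is exactly the paper's \eqref{npz}), and then split on whether $|z_*|$ exceeds $\|p\|_1/n$, applying \eqref{poz} in the large-distance regime and \eqref{pocl} in the small-distance regime before extracting $n$-th roots. Your explicit treatment of the $d=0$ case and the remark on why \eqref{poz} fails for small $d$ are correct but not substantive departures.
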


Informally: if $\|p\|_0$ is the dominant contribution to the total size $\|p\|$, then the lemniscate $E_1(p)$ is repelled away from the origin by $\gg \|p\|_0/n$.  In practice, this will cause the lemniscate length to decrease by an amount proportional to $\|p\|_0$, as per Heuristic \ref{main-heuristic}.

\begin{proof}  Let $z$ be the closest point in $\partial E_1(p)$ to the origin, thus our task is to show that $\|p\|_0 \ll \| p \|_1 + n |z|$.  If $|z| \geq \|p\|_1/n$, then from \eqref{poz}, \eqref{npz} one has
$$ n^{-n} \|p\|_0^n \ll |z|^n$$
giving the claim; if instead $|z| < \|p\|_1/n$, then from \eqref{pocl}, \eqref{npz} one has
$$ n^{-n} \|p\|_0^n \ll e^{O(n)} n^{-n} \|p\|_1^n$$
again giving the claim.
\end{proof}

For the model polynomial $p_0(z) = z^n - 1$ we have the identity
$$ p_0(z) = -1 + \frac{z p_0'(z)}{n}.$$
It turns out that we can obtain an approximate version of this identity as long as one stays away from the origin and from the critical points.

\begin{proposition}[More precise formula for $p$]\label{pform}  Suppose that $|z| \gg \|p\|$ and $\delta(z) \gg 1/n$.  Then we have
    $$ p(z) = -1 + \frac{zp'(z)}{n} \left( 1 + O\left(\frac{\|p\|^2}{n |z|^2 \delta(z)} \right) \right).$$
    In particular, from \eqref{p'-bound-lower}, we have $p(z) \neq -1$.
\end{proposition}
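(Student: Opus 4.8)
The idea is to study the quantity $h(z) \coloneqq p(z) + 1 - \frac{zp'(z)}{n}$, which vanishes identically for $p = p_0$, by writing it out as an explicit polynomial in $z$ whose coefficients we bound in terms of $\|p\|_1$ and $\|p\|_0$, and then to compare it against the lower bound \eqref{p'-bound-lower} for $|p'(z)|$.

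\emph{Step 1: an exact formula for $h$.} Expanding $\frac{p'(z)}{n} = \prod_\zeta(z-\zeta) = \sum_{m=0}^{n-1}(-1)^m e_m z^{n-1-m}$, where $e_m = e_m(\zeta)$ is the $m$th elementary symmetric function of the critical points, and integrating termwise, one finds $p(z) = p(0) + n\sum_{m=0}^{n-1}\frac{(-1)^m e_m}{n-m}z^{n-m}$, while $\frac{zp'(z)}{n} = \sum_{m=0}^{n-1}(-1)^m e_m z^{n-m}$. Subtracting, and using $e_0 = 1$ together with $e_1 = \sum_\zeta\zeta = 0$ from \eqref{zeta-mean} (so the $m=0,1$ terms drop out), we obtain
$$ h(z) = (1+p(0)) + \sum_{m=2}^{n-1}\frac{m(-1)^m e_m}{n-m}\,z^{n-m}. $$

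\emph{Step 2: bounding $h$.} Since $|e_m(\zeta)| \le e_m(|\zeta_1|,\dots,|\zeta_{n-1}|) \le \frac{1}{m!}\bigl(\sum_\zeta|\zeta|\bigr)^m = \frac{\|p\|_1^m}{m!}$, and since the concave function $m \mapsto (m-1)(n-m)$ attains its minimum $n-2$ at the endpoints of $[2,n-1]$, we have $\frac{m}{(n-m)m!} = \frac{1}{(n-m)(m-1)!} \le \frac{1}{(n-2)(m-2)!}$, whence
$$ |h(z)| \le |1+p(0)| + \frac{\|p\|_1^2|z|^{n-2}}{n-2}\sum_{m\ge2}\frac{(\|p\|_1/|z|)^{m-2}}{(m-2)!} \ll |1+p(0)| + \frac{\|p\|_1^2|z|^{n-2}}{n}, $$
using $\|p\|_1/|z| = O(1)$ (from $|z| \gg \|p\| \ge \|p\|_1$) to sum the exponential series, and $n-2 \asymp n$. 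By \eqref{origin-repulse} we have $|1+p(0)| = n^{-n}\|p\|_0^n$, and $\|p\|_0,\|p\|_1 \le \|p\|$ by \eqref{sdef}, so $|h(z)| \ll n^{-n}\|p\|^n + \frac{\|p\|^2|z|^{n-2}}{n}$.

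\emph{Step 3: conclusion.} The hypothesis $|z| \gg \|p\|$ may be taken strong enough to force $|z| > 2\|p\|_1$, so \eqref{p'-bound-lower} (with its $\delta$-factor omitted) gives $|p'(z)| \gg n|z|^{n-1}$, and hence $\frac{|z||p'(z)|}{n}\cdot\frac{\|p\|^2}{n|z|^2\delta(z)} \gg \frac{|z|^{n-2}\|p\|^2}{n\delta(z)}$. Comparing with Step 2, the term $\frac{\|p\|^2|z|^{n-2}}{n}$ is dominated since $\delta(z) = O(1)$ by \eqref{delta-upper}, and the term $n^{-n}\|p\|^n$ is dominated since $n^{1-n}\delta(z)(\|p\|/|z|)^{n-2} = O(1)$; this gives the stated estimate $p(z) = -1 + \frac{zp'(z)}{n}\bigl(1 + O(\|p\|^2/(n|z|^2\delta(z)))\bigr)$. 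Finally, $p(z) \neq -1$: under the hypotheses the error term $O(\|p\|^2/(n|z|^2\delta(z)))$ is less than $1$ in modulus, so the factor $1 + O(\cdots)$ is nonzero, and $p'(z) \neq 0$ by \eqref{p'-bound-lower}, while $z \neq 0$; thus $p(z)+1 = \frac{zp'(z)}{n}(1+O(\cdots)) \neq 0$.

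The only delicate point is securing the extra factor of $1/n$ in Step 2: a crude estimate of size $O(\|p\|_1^2|z|^{n-2})$ would be off by a factor of $n$ and would fail exactly when $\delta(z) \asymp 1$. That gain comes precisely from the denominators $n-m$ produced by integrating the series for $p$, via the elementary bound $(m-1)(n-m) \ge n-2$.
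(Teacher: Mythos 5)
Your computation is correct as far as it goes, and it is a genuinely different route from the paper's. The paper studies $f(t)=p(tz)+1-\tfrac{tzp'(tz)}{n}$ and integrates $f'(t)$ along the ray from $0$ to $z$, extracting the factor $1/n$ from $\int_{t_0}^1 t^{n-O(1)}\,dt$; you instead write $h=p+1-\tfrac{zp'}{n}$ as an explicit polynomial via elementary symmetric functions of the critical points and harvest the $1/n$ from the denominators $n-m$ produced by termwise integration, via the correct and pleasantly sharp observation $(m-1)(n-m)\ge n-2$. Steps 1 and 2 are clean and I verified them; the bound $|h(z)|\ll n^{-n}\|p\|_0^n+\|p\|_1^2|z|^{n-2}/n$ is right.

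The problem is Step 3, where you convert this absolute bound on $|h|$ into a multiplicative error on $\tfrac{zp'(z)}{n}$ by invoking $|p'(z)|\gg n|z|^{n-1}$, which forces you to strengthen the hypothesis to $|z|>2\|p\|_1$ so that the factor $\delta(z)^{O(\|p\|_1/|z|)}$ in \eqref{p'-bound-lower} can be dropped. This proves the proposition only under the reading ``$|z|\ge C\|p\|$ for a sufficiently large absolute constant $C$.'' The paper's proof (note the $O_c$ constants attached to the choice $t_0=c\|p\|/|z|$) establishes, and its later applications require, the stronger reading in which the constant in ``$|z|\gg\|p\|$'' may be an arbitrarily small fixed constant, with the conclusion's implied constant depending on it: in Proposition \ref{annulus-2} the formula is applied for $|z|$ as small as $\|p\|/C_0^2$ with $C_0$ large, and in Propositions \ref{annulus} and \ref{ets} for $|z|\ge\eps$ with $\eps$ small and $\|p\|\ll 1$. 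In the regime $c_0\|p\|\le|z|\le 2\|p\|_1$ (nonempty when, say, $\|p\|_0=0$), \eqref{p'-bound-lower} only yields $|p'(z)|\gg n|z|^{n-1}\delta(z)^{O(1/c_0)}$, and since $\delta(z)$ may be as small as $\asymp 1/n$ this costs a factor $n^{-O(1/c_0)}$ that the single power of $1/n$ gained in Step 2 cannot absorb. The paper avoids this entirely because it never compares the main term against $n|z|^{n-1}$: it bounds $f(1)-f(t_0)$ directly as a multiple of $|p'(z)|$ through the ratio $|p'(tz)/p'(z)|=\prod_{\zeta'}|tz-\zeta'|/|z-\zeta'|$, so the estimate survives even where $|p'(z)|$ is anomalously small. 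If you only need the large-constant version (as in Proposition \ref{outside-again}, where $|z|\ge C_0^2\|p\|$), your argument is a complete and simpler substitute; to recover the full statement you would need to bound $h(z)$ against $|p'(z)|$ itself rather than against $n|z|^{n-1}$.
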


\begin{proof}  For any $0 < t \leq 1$, we introduce the function
    $$ f(t) \coloneqq p(tz) + 1 - \frac{tzp'(tz)}{n},$$
then our task is to establish the bound
$$ f(1) \ll \frac{|p'(z)| \|p\|^2}{n^2 |z| \delta(z)}.$$
Set $t_0 \coloneqq \frac{c\|p\|}{|z|}$ for a small constant $c>0$, then $0 < t_0 \leq 1$.
From Proposition \ref{pvar}(i) (bounding $\|p\|_1$ by $\|p\|$) and \eqref{pin-2} we have
\begin{equation}\label{1eps}
 \begin{split}
    f(t_0) &= p(t_0 z) - \frac{t_0 p'(t_0 z)}{n} \\
    &= -1 + O_c\left( (c\|p\|)^n \right)
 \end{split}
\end{equation}
and also
$$
\frac{|p'(z)| \|p\|^2}{n^2 |z| \delta(z)} \gg n^{-O(1)} |z|^{n-2} \|p\|^2 \delta(z)^{O(1)} \gg (c\|p\|)^n
$$
for $c$ small enough, by the lower bounds on $|z|$ and $\delta(z)$.  Thus, by the triangle inequality, it will suffice to show that
$$ f(1) - f(t_0) \ll_c \frac{|p'(z)| \|p\|^2}{n^2 |z| \delta(z)}.$$
From the fundamental theorem of calculus and the triangle inequality it will suffice to show that
$$ \int_{t_0}^1 |f'(t)|\ dt \ll_c \frac{|p'(z)| \|p\|^2}{n^2 |z| \delta(z)}.$$
We therefore turn to estimating $f'(t)$ for $t_0 \leq t \leq 1$.  Using \eqref{psi-expand}, \eqref{zeta-mean} we bound
\begin{align*}
 f'(t)
&= \frac{n-1}{n} zp'(tz) - \frac{tz^2 p''(tz)}{n} \\
&= \frac{zp'(tz)}{n} \left( (n-1) - \sum_\zeta \frac{tz}{tz-\zeta} \right) \\
&= - \frac{zp'(z)}{n} \frac{p'(tz)}{p'(z)} \sum_\zeta \frac{\zeta}{tz-\zeta} \\
&= - \frac{zp'(z)}{n} \frac{p'(tz)}{p'(z)} \left( \sum_\zeta \frac{\zeta}{tz-\zeta} - \frac{\zeta}{tz} \right) \\
&= \frac{p'(z)}{tn} \frac{p'(tz)}{p'(z)} \sum_\zeta \frac{\zeta^2}{tz-\zeta} \\
&\ll \frac{|p'(z)|}{tn} \left( \sum_\zeta \frac{|\zeta|^2}{|z-\zeta|} \right) \prod_{\zeta' \neq \zeta} \frac{|tz-\zeta'|}{|z-\zeta'|} \\
&\ll \frac{|p'(z)| \sum_\zeta |\zeta|^2}{n t |z| \delta(z)} \sup_\zeta \prod_{\zeta' \neq \zeta} \frac{|tz-\zeta'|}{|z-\zeta'|}
\end{align*}
and so by \eqref{l1-l2} we reduce to showing that
$$ \int_{t_0}^1 \sup_\zeta \prod_{\zeta' \neq \zeta} \frac{|tz-\zeta'|}{|z-\zeta'|}\ \frac{dt}{t} \ll_c \frac{1}{n}.$$
If $\zeta' \in D(0, c\|p\|/4)$, then from Taylor expansion we have
$$\frac{|tz-\zeta'|}{|z-\zeta'|} = t \frac{|1-\zeta'/tz|}{|1-\zeta'/z|} = t \exp(O(|\zeta'|/t|z|)) $$
while for $\zeta' \notin D(0, c\|p\|/4)$ we have
$$  \frac{|tz-\zeta'|}{|z-\zeta'|} \leq 1 + \frac{(1-t)|z|}{|z-\zeta'|} \leq 1 + O(n(1-t))$$
since $\delta(z) \gg 1/n$.  By \eqref{size-def} only $O_c(1)$ of the critical points are in the latter category.  Applying \eqref{size-def} again, we conclude that
$$ \sup_\zeta \prod_{\zeta' \neq \zeta} \frac{|tz-\zeta'|}{|z-\zeta'|} \ll t^{n-O_c(1)} (1 + O(n(1-t)))^{O(1)} \exp( O( \|p\| / t|z| )).$$
Since $t \geq t_0$, we have $\exp( O( \|p\| / t|z| )) = O_c(1)$.  Thus we reduce to showing that
$$ \int_{t_0}^1 t^{n-O_c(1)} (1 + n(1-t))^{O(1)}\ dt \ll_c \frac{1}{n}.$$
Performing the substitution $t = 1 - \frac{s}{n}$, we reduce to showing that
$$\int_0^\infty \left(1-\frac{s}{n}\right)^{n-O_c(1)} (1 + s)^{O_c(1)}\ ds \ll_c 1.$$
But we can bound $\left(1-\frac{s}{n}\right)^{n-O_c(1)} \ll_c e^{-s/2}$, giving the claim.
\end{proof}

The above bounds are mostly useful in the regime where $\|p\|$ or $\|p\|_1$ is bounded or small.  However, even when $\|p\|_1$ is somewhat large, we can still obtain some useful control on $p'$ that improves upon the Cauchy inequality:

\begin{lemma}[Improved Cauchy inequality]\label{improv}  If $D(z_0,s)$ is a disk obeying the condition
\begin{equation}\label{zr}
|z_0| + \frac{s}{10} > \frac{2 \|p\|_1}{n},
\end{equation}
the one has the improved Cauchy inequality
\begin{equation}\label{cauchy-improv}
    |p'(z_0)| \ll e^{-\frac{cns + O(\|p\|_1)}{|z_0|+s}} \frac{\sup_{z \in D(z_0,s)} |p(z)|}{s}
\end{equation}
for some absolute constant $c>0$.
\end{lemma}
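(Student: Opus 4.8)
The plan is to compare $|p'(z_0)|$ with $|p'(z_1)|$ at a carefully chosen nearby point $z_1$, to bound $|p'(z_1)|$ by the ordinary Cauchy inequality on a sub-disk of $D(z_0,s)$, and to estimate the ratio $|p'(z_0)|/|p'(z_1)|$ via the product formula $p'(z)=n\prod_\zeta(z-\zeta)$ of \eqref{pp-factor}, using that by \eqref{markov} the polynomial $p$ has $\gg n$ critical points clustered within $\rho:=2\|p\|_1/n$ of the origin. Write $R:=|z_0|+s$ and $M:=\sup_{z\in D(z_0,s)}|p(z)|$. Since ordinary Cauchy already gives $|p'(z_0)|\le M/s$, we may (agreeing to take the implied constant in $O(\|p\|_1)$ at least $256$ times the final constant $c$) assume $\|p\|_1\le ns/256$, for otherwise $e^{-(cns+O(\|p\|_1))/R}\gg 1$ and Cauchy already suffices; then $\rho\le s/128$, the hypothesis \eqref{zr} is automatic, and by \eqref{markov} at most $n/2$ critical points lie outside $D(0,\rho)$.

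When $|z_0|\ge s/32$ (so $|z_0|\asymp R$ and $\rho\le|z_0|/4$), take $z_1$ in the small disk $\mathcal R:=D(z_0+c_0s\,\hat z_0,\ c_0s/10)$, where $\hat z_0=z_0/|z_0|$ and $c_0$ is a small absolute constant: then $\mathcal R\subset D(z_0,s)$, every $z_1\in\mathcal R$ has $|z_1|\ge|z_0|+c_0s/2$, the displacement $v:=z_1-z_0$ obeys $\Re(\bar v(z_0-\zeta))\ge c_0 s(9|z_0|-11|\zeta|)/10$ by a direct computation, and $\overline{D(z_1,\tfrac9{10}s)}\subset\overline{D(z_0,s)}$, so Cauchy gives $|p'(z_1)|\le 2M/s$. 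Using \eqref{multip} and \eqref{markov} to average the choice of $z_1$ over $\mathcal R$, we may in addition require $s\sum_{|\zeta|>\frac9{11}|z_0|}\frac1{|z_1-\zeta|}=O\big(\#\{|\zeta|>\tfrac9{11}|z_0|\}\big)=O(\|p\|_1/R)$. Now in $\log\frac{|p'(z_0)|}{|p'(z_1)|}=\sum_\zeta\log\frac{|z_0-\zeta|}{|z_1-\zeta|}$, the identity $|z_1-\zeta|^2=|z_0-\zeta|^2+2\Re(\bar v(z_0-\zeta))+|v|^2$ shows the $\ge n/2$ critical points with $|\zeta|\le|z_0|/2$ each contribute $\le -c_1 s/R$ for an absolute $c_1>0$ (using also $|z_0-\zeta|\le\tfrac32|z_0|\ll R$), the critical points with $|z_0|/2<|\zeta|\le\tfrac9{11}|z_0|$ contribute $\le 0$, and the remaining ones contribute $\le s\sum_{|\zeta|>\frac9{11}|z_0|}\frac1{|z_1-\zeta|}=O(\|p\|_1/R)$; hence $|p'(z_0)|\le e^{-c_1 ns/(2R)+O(\|p\|_1/R)}\cdot 2M/s$, as required. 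The case $|z_0|<s/32$ (where $D(z_0,s)$ contains the origin and $R\asymp s$) is handled the same way: take $z_1$ in the annulus $\Ann(0,s/4,s/3)\subset D(z_0,s)$, again chosen by averaging so that $s\sum_{|\zeta|\ge s/100}\frac1{|z_1-\zeta|}=O(\|p\|_1/R)$, use $\overline{D(z_1,\tfrac35 s)}\subset\overline{D(z_0,s)}$ for the Cauchy bound on $|p'(z_1)|$, and note that the $\ge n/4$ critical points in $D(0,\rho)$ satisfy $|z_0-\zeta|\le 5s/128$ and $|z_1-\zeta|\ge 31s/128$ and so each contribute $\le -\log 6$ to the logarithmic ratio, while the rest contribute $\le 0$ or $\le O(\|p\|_1/R)$.

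I expect the main obstacle to be precisely the contribution of the ``outer'' critical points — those lying within $O(s)$ of $z_0$ but outside the origin cluster — of which there may be $\asymp n$: a crude estimate of their effect on $\log(|p'(z_0)|/|p'(z_1)|)$ costs $O(n)$, which would swamp the main gain $\asymp ns/R$ whenever $s\ll R$. Two ingredients keep this loss down to $O(\|p\|_1/R)$. First, Markov's inequality \eqref{markov} limits the number of critical points whose modulus is comparable to $|z_0|$ (hence to $R$) to $O(\|p\|_1/R)$, and the intermediate-modulus critical points, though numerous, contribute non-positively thanks to the near-radial direction of $v$. Second, the freedom to place $z_1$ anywhere in a region of area $\asymp s^2$ lets one invoke the Riesz-potential estimate \eqref{multip} to make $s\sum_{\zeta\text{ outer}}\frac1{|z_1-\zeta|}$ only $O$ of the number of outer points — crucially with no spurious $\log n$ factor, which a naive ``avoid every outer critical point'' pigeonhole argument would introduce.
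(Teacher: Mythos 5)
Your proposal is correct and follows essentially the same route as the paper: compare $|p'(z_0)|$ with $|p'|$ at a point shifted radially outward by $\asymp s$, bound the latter by the ordinary Cauchy inequality on a subdisk of $D(z_0,s)$, and control the ratio via the factorization \eqref{pp-factor}, with the $\gg n$ critical points clustered within $2\|p\|_1/n$ of the origin each gaining a multiplicative factor $e^{\Omega(s/(|z_0|+s))}$ in distance and the $O(\|p\|_1/(|z_0|+s))$ outer critical points costing only $O(1)$ each after an averaging argument. The only differences are cosmetic: the paper averages one-dimensionally over the shift length $t\in[1/3,2/3]$ using the local integrability of the logarithm, whereas you average two-dimensionally over a small disk or annulus via the Riesz-potential bound \eqref{multip}, and you dispose of the regime $\|p\|_1\gg ns$ trivially rather than invoking \eqref{zr} directly.
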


Comparing \eqref{cauchy-improv} against the standard Cauchy inequality
\begin{equation}\label{standard-cauchy}
    |p'(z_0)| \leq \frac{\sup_{z \in D(z_0,s)} |p(z)|}{s},
\end{equation}
we see that the former inequality enjoys an exponential gain in the regime $s \gg \|p\|_1/n$.  This can be contrasted with the bounds in Proposition \ref{pvar} or Proposition \ref{pform}, which are significantly more precise than \eqref{cauchy-improv}, but are only effective under the condition $|z_0| \gg \|p\|_1$, which is often more restrictive in practice.

\begin{proof}  Write $z_0 = re^{i\theta}$ and $M \coloneqq \sup_{z \in D(z_0,s)} |p(z)|$.  From \eqref{standard-cauchy} we have
$$ p'((r+ts) e^{i\theta}) \ll \frac{M}{s}$$
for all $1/3 \leq t \leq 2/3$.  In particular,
$$
 \log |p'(re^{i\theta})| \leq \log \frac{M}{s} + \log \frac{|p'(re^{i\theta})|}{|p'((r+ts) e^{i\theta})|} + O(1).
$$
Applying \eqref{pp-factor} to the right-hand side, we conclude that
$$ \log |p'(re^{i\theta})| \leq \log \frac{M}{s} + \sum_\zeta \log \frac{|re^{i\theta}-\zeta|}{|(r+ts) e^{i\theta}-\zeta|} + O(1).$$
By \eqref{markov-many}, \eqref{zr} we see that $\gg n$ of the critical points $\zeta$ lie in the disk $D(0,|z_0| + s/10)$, while only $O(\frac{\|p\|_1}{|z_0|+s})$ lie outside this disk.  By elementary geometry, we see that
$$ -\log \frac{|re^{i\theta}-\zeta|}{|(r+ts) e^{i\theta}-\zeta|} \gg \frac{s}{|z_0|+s}$$
for all critical points in the former category.  We conclude that
$$ \log |p'(re^{i\theta})| \leq \log \frac{M}{s} - \frac{c n s}{|z_0|+s} + \sum_{\zeta \notin D(0,|z_0|+s/10)} \log \frac{|re^{i\theta}-\zeta|}{|(r+ts) e^{i\theta}-\zeta|} + O(1).$$
for all $1/3 \leq t \leq 2/3$ and some absolute constant $c>0$.  We average in $t$ to obtain
\begin{equation}\label{log-b}
     \log |p'(re^{i\theta})| \leq \log \frac{M}{s} - \frac{c n s}{|z_0|+s} + 3 \sum_{\zeta \notin D(0,|z_0|+s/10)} \int_{1/3}^{2/3} \log \frac{|re^{i\theta}-\zeta|}{|(r+ts) e^{i\theta}-\zeta|}\ dt + O(1).
\end{equation}
Routine computation shows that
$$ \int_{1/3}^{2/3} \log \frac{|z|}{|z + t|}\ dt \leq O(1)$$
for any complex number $z$. Indeed for $|z| \geq 2$ this is immediate from the triangle inequality, while for $|z| < 2$ we may discard the numerator and use the uniform local integrability of the logarithm on horizontal lines.  Rotating and rescaling, we conclude that the summands in \eqref{log-b} are bounded above by $O(1)$.  Thus by \eqref{markov} we have
$$ \log |p'(re^{i\theta})| \leq \log \frac{M}{s} - \frac{c n s}{|z_0|+s} + O\left(\frac{\|p\|_1}{|z_0|+s}\right) + O(1),$$
giving the claim.
\end{proof}

\section{The defect in the triangle inequality}\label{defect-sec}

The triangle inequality asserts that
$$ \left|\sum_{i=1}^n z_i\right| \leq \sum_{i=1}^n |z_i|$$
for any complex numbers $z_1,\dots,z_n$.  We can lower bound the defect in this inequality as follows:

\begin{lemma}[Defect version of triangle inequality]\label{defect}  For any complex numbers $z_1,\dots,z_n$ with $n \geq 2$, we have
$$ \sum_{i=1}^n |z_i| - \left|\sum_{i=1}^n z_i\right| \geq \frac{\lfloor n/2 \rfloor}{n(n-1)} \sum_{1 \leq i,j \leq n} |z_i| + |z_j| - |z_i+z_j|.$$
\end{lemma}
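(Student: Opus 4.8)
The plan is to reduce the estimate to the ordinary triangle inequality applied in two stages, followed by an averaging argument over pairings.

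\emph{Step 1: a single pairing.} Abbreviate $D_{ij} \coloneqq |z_i| + |z_j| - |z_i + z_j| \geq 0$. Call a \emph{pairing} any partition of $\{1,\dots,n\}$ into $\lfloor n/2\rfloor$ disjoint unordered pairs together with (when $n$ is odd) one leftover index $\ell$. Given a pairing $M$, I would regroup the sum $\sum_{i=1}^n z_i$ according to the pairs of $M$ and apply the triangle inequality to the regrouped sum, obtaining
\[ \left| \sum_{i=1}^n z_i \right| \leq \sum_{\{i,j\} \in M} |z_i + z_j| + [n \text{ odd}]\,|z_\ell|. \]
Subtracting this from the identity $\sum_{i=1}^n |z_i| = \sum_{\{i,j\}\in M}(|z_i| + |z_j|) + [n\text{ odd}]\,|z_\ell|$, in which the leftover term cancels, gives
\[ \sum_{i=1}^n |z_i| - \left|\sum_{i=1}^n z_i\right| \;\geq\; \sum_{\{i,j\}\in M} D_{ij}, \]
valid for \emph{every} pairing $M$.

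\emph{Step 2: averaging over pairings.} Since the bound in Step 1 holds for every $M$, it holds with the right-hand side replaced by its average over a uniformly random pairing. The uniform distribution on pairings is invariant under the natural $S_n$-action on $\{1,\dots,n\}$, so each unordered pair $\{i,j\}$ lies in the random pairing with the same probability; as a pairing has exactly $\lfloor n/2\rfloor$ pairs among $\binom{n}{2}$ candidates, this common probability is $\lfloor n/2\rfloor / \binom{n}{2} = \frac{2\lfloor n/2\rfloor}{n(n-1)}$. (For $n$ even this is the standard fact that a uniform perfect matching of $K_n$ contains a prescribed edge with probability $1/(n-1)$; for $n$ odd one may instead delete a uniformly random index first and reduce to the even case on $n-1$ points.) By linearity of expectation, the average of $\sum_{\{i,j\}\in M} D_{ij}$ equals
\[ \frac{2\lfloor n/2\rfloor}{n(n-1)} \sum_{1 \leq i < j \leq n} D_{ij} \;=\; \frac{\lfloor n/2\rfloor}{n(n-1)} \sum_{1 \leq i,j \leq n} D_{ij}, \]
using $D_{ij}=D_{ji}$ and $D_{ii}=0$. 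Combining with Step 1 yields the lemma. (Equivalently, one can avoid probabilistic language and simply sum the Step 1 inequality over all pairings, counting for each pair the number of pairings containing it.)

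\emph{Main difficulty.} There is essentially no analytic obstacle: the only genuine content beyond the triangle inequality is the combinatorial averaging identity that a given pair occurs in a uniform pairing with probability $\frac{2\lfloor n/2\rfloor}{n(n-1)}$, and the bookkeeping check that the floor $\lfloor n/2\rfloor$ (rather than $n/2$) emerges correctly in both parities, with the odd-case leftover term $|z_\ell|$ canceling harmlessly. I would treat that identity as the one point worth writing out carefully.
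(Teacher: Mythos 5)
Your proposal is correct and is essentially the paper's own argument: group the $z_i$ into $\lfloor n/2\rfloor$ pairs (plus a leftover index when $n$ is odd), apply the triangle inequality pairwise, and average — the paper averages over permutations of $z_1,\dots,z_n$ rather than over pairings, but this induces exactly the same uniform weight $\frac{2\lfloor n/2\rfloor}{n(n-1)}$ on each unordered pair. The bookkeeping (leftover term cancelling, $D_{ii}=0$, factor of $2$ from ordered versus unordered pairs) is handled correctly.
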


\begin{proof} By summing $z_i$ in pairs and using the triangle inequality, we have
$$ \left|\sum_{i=1}^n z_i\right| \leq \sum_{i=1}^{n/2} |z_{2i-1} + z_{2i}| $$
when $n$ is even and
$$ \left|\sum_{i=1}^n z_i\right| \leq \sum_{i=1}^{(n-1)/2} |z_{2i-1} + z_{2i}| + |z_n|$$
when $n$ is odd.  In either case, we conclude that
$$ \sum_{i=1}^n |z_i| - \left|\sum_{i=1}^n z_i\right| \geq \sum_{i=1}^{\lfloor n/2\rfloor} |z_{2i-1}| + |z_{2i}| - |z_{2i-1} + z_{2i}|.$$
Averaging over all permutations of the $z_1,\dots,z_n$, we obtain the claim.
\end{proof}

Specializing this to the expression \eqref{psi-expand}, we can lower bound the defect in \eqref{triangle}:

\begin{corollary}\label{defect-psi}  For any $z$ that is not a critical point, we have
\begin{equation}\label{def}
 \sum_\zeta \frac{1}{|z-\zeta|} - |\psi(z)| \gg \frac{1}{n} \sum_{\zeta,\zeta'} \frac{1}{|z-\zeta|} + \frac{1}{|z-\zeta'|} - \left| \frac{1}{z-\zeta} + \frac{1}{z-\zeta'}  \right|
\end{equation}
(recalling the convention that $\zeta, \zeta'$ both vary over critical points counting multiplicity).
\end{corollary}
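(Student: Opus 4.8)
The plan is to obtain \eqref{def} as an immediate specialization of Lemma \ref{defect}. Since $z$ is not a critical point, the $n-1$ complex numbers $w_\zeta \coloneqq \frac{1}{z-\zeta}$ (with $\zeta$ ranging over the critical points of $p$, counted with multiplicity) are all well-defined and finite; and since $n>2$ throughout, there are $n-1 \geq 2$ of them, so Lemma \ref{defect} applies with its parameter ``$n$'' taken to be $n-1$.

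First I would match up the two sides. By \eqref{psi-expand} we have $\sum_\zeta w_\zeta = \psi(z)$, while trivially $\sum_\zeta |w_\zeta| = \sum_\zeta \frac{1}{|z-\zeta|}$; hence the left-hand side of Lemma \ref{defect} is exactly the left-hand side of \eqref{def}. On the right-hand side, the double sum over ordered pairs $(\zeta,\zeta')$ in Lemma \ref{defect} is literally the double sum appearing in \eqref{def}; note in particular that the diagonal terms $\zeta=\zeta'$ contribute $2|w_\zeta| - |2w_\zeta| = 0$, so it makes no difference whether they are included. Thus Lemma \ref{defect} gives \eqref{def} with the explicit constant $\frac{\lfloor (n-1)/2 \rfloor}{(n-1)(n-2)}$ in place of $\frac{1}{n}$.

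It then only remains to check that this constant is $\gg \frac{1}{n}$. Using $\lfloor (n-1)/2 \rfloor \geq (n-2)/2$ and $n-2 \geq 1$ (both valid since $n>2$), we obtain
$$ \frac{\lfloor (n-1)/2 \rfloor}{(n-1)(n-2)} \geq \frac{(n-2)/2}{(n-1)(n-2)} = \frac{1}{2(n-1)} \geq \frac{1}{2n}, $$
which suffices. I do not expect any genuine difficulty here: the corollary is a direct consequence of Lemma \ref{defect}, and the only points requiring care are the bookkeeping between the $n-1$ critical points and the index ``$n$'' in the statement of Lemma \ref{defect}, and the (harmless) presence of the vanishing diagonal terms in the double sum.
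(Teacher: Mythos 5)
Your proposal is correct and is exactly the argument the paper intends: the corollary is stated as an immediate specialization of Lemma \ref{defect} to the $n-1$ numbers $\frac{1}{z-\zeta}$ via \eqref{psi-expand}, with no further proof given. Your bookkeeping of the constant $\frac{\lfloor (n-1)/2\rfloor}{(n-1)(n-2)} \geq \frac{1}{2n}$ and the observation that the diagonal terms vanish are both accurate.
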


We can control the summands on the right-hand side of \eqref{def} in an integral sense:

\begin{lemma}[Averaged defect in triangle inequality]\label{tridef}  Suppose that $\zeta \in D(z_0,r/2)$ and $\zeta' \in D(z_0, Cr)$ for some $z_0 \in \C$ and $C, r > 0$.  Then
$$ \int_{D(z_0,r)} \frac{1}{|z-\zeta|} + \frac{1}{|z-\zeta'|} - \left| \frac{1}{z-\zeta} + \frac{1}{z-\zeta'}  \right|\ dA(z) \asymp_C |\zeta-\zeta'|.$$
\end{lemma}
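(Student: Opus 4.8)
The plan is to pass to a normalized configuration and then estimate the integrand
$$g(z) := \frac{1}{|z-\zeta|} + \frac{1}{|z-\zeta'|} - \left| \frac{1}{z-\zeta} + \frac{1}{z-\zeta'} \right|$$
directly, treating the upper and lower bounds separately. Rescaling $z \mapsto z_0 + rz$ multiplies both the integral and $|\zeta - \zeta'|$ by $r$, so we may take $z_0 = 0$, $r = 1$; then $\zeta \in D(0,\tfrac12)$, $\zeta' \in D(0,C)$, and $d := |\zeta - \zeta'| \le \tfrac12 + C$ (the case $d = 0$ being trivial, as then $g \equiv 0$). Writing $w_1 := \tfrac{1}{z-\zeta}$ and $w_2 := \tfrac{1}{z-\zeta'}$, so $g = |w_1| + |w_2| - |w_1 + w_2|$, the starting point for both directions is the identity $(|w_1| + |w_2|)^2 - |w_1+w_2|^2 = 2|w_1||w_2|(1 - \cos\phi)$, where $\phi \in [0,\pi]$ is the angle between $w_1$ and $w_2$ — equivalently $\dist(\arg\tfrac{z-\zeta'}{z-\zeta}, 2\pi\Z)$, equivalently the angle subtended at $z$ by the segment $[\zeta,\zeta']$. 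This rearranges to $g = 2|w_1||w_2|(1 - \cos\phi) / (|w_1| + |w_2| + |w_1 + w_2|)$.

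For the upper bound I split $D(0,1)$ into $N := D(\zeta, 2d) \cup D(\zeta', 2d)$ and its complement. On $N$ I use only the crude estimate $g \le 2\min(|w_1|,|w_2|) = \tfrac{2}{\max(|z-\zeta|,|z-\zeta'|)} \le \tfrac{4}{|z-\zeta| + |z-\zeta'|} \le \tfrac4d$ (the last step from $|z-\zeta|+|z-\zeta'| \ge d$), whence $\int_N g \le \tfrac4d |N| \le 32\pi d$. Off $N$ one has $|z-\zeta|, |z-\zeta'| \ge 2d$; then $\bigl|\tfrac{\zeta'-\zeta}{z-\zeta}\bigr| \le \tfrac12$, and the elementary bound $|\arg(1-w)| \le 2|w|$ for $|w| \le \tfrac12$ applied to $\tfrac{z-\zeta'}{z-\zeta} = 1 - \tfrac{\zeta'-\zeta}{z-\zeta}$ (and the same with $\zeta \leftrightarrow \zeta'$) gives $\phi \le \tfrac{2d}{\max(|z-\zeta|,|z-\zeta'|)}$; feeding $1 - \cos\phi \le \tfrac{\phi^2}{2}$ into the formula for $g$ yields $g \le \tfrac{\phi^2}{|z-\zeta| + |z-\zeta'|} \le \tfrac{4d^2}{\max(|z-\zeta|,|z-\zeta'|)^3}$. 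Since $\int_{\{|z-\zeta| \ge 2d\}} |z-\zeta|^{-3}\, dA = \tfrac\pi d$, the complementary region contributes $\le 4d^2 \cdot \tfrac\pi d = 4\pi d$. Hence $\int_{D(0,1)} g \ll d$, with an absolute implied constant.

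For the lower bound I must produce a region of definite size, lying inside $D(0,1)$, on which $g \gg \tfrac1d$. Put $u := (\zeta'-\zeta)/d$, $L := \min(d, \tfrac14)$, $m := \tfrac{\zeta+\zeta'}{2}$, $c_S := \zeta + \tfrac{L}{2}u$, and take $B := D(c_S, L/2)$, the disk having the shortened sub-segment $[\zeta, \zeta + Lu]$ as a diameter. I check: (i) $B \subseteq D(0,1)$, since $|c_S| < \tfrac12 + \tfrac18$ forces $B \subseteq D(0, \tfrac58 + \tfrac L2) \subseteq D(0,\tfrac34)$ — passing to the sub-segment is essential here, because $m$ itself may lie outside $D(0,1)$ when $C$ is large; (ii) $B \subseteq D(m, d/2)$, from $|c_S - m| = \tfrac{d-L}{2}$ and the triangle inequality; (iii) by Thales' theorem $D(m,d/2)$ is exactly the set of $z$ from which $[\zeta,\zeta']$ subtends an angle $> \tfrac\pi2$, so for $z \in B$ off the line through $\zeta,\zeta'$ we have $\phi > \tfrac\pi2$, hence $\Re(w_1\overline{w_2}) < 0$, hence $|w_1+w_2| < \sqrt{|w_1|^2 + |w_2|^2}$, and therefore $g > |w_1| + |w_2| - \sqrt{|w_1|^2+|w_2|^2} \ge \tfrac{|w_1||w_2|}{|w_1|+|w_2|} = \tfrac{1}{|z-\zeta|+|z-\zeta'|}$. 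On $B$ one also has $|z-\zeta| + |z-\zeta'| < L + d \le 2d$, so $g > \tfrac{1}{2d}$ there, giving $\int_{D(0,1)} g \ge \int_B g > \tfrac{|B|}{2d} = \tfrac{\pi L^2}{8d}$; as $L = \min(d,\tfrac14)$ and $d \le \tfrac12 + C$, the right side is $\gg_C d$.

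Together the two bounds give $\int_{D(0,1)} g \asymp_C d$, which is the claim after undoing the rescaling. The argument is essentially all elementary planar geometry; the only point requiring care is the one flagged above — keeping the test disk for the lower bound inside $D(0,1)$ uniformly in $C$ — which is precisely why one works with the length-$\min(d,\tfrac14)$ sub-segment of $[\zeta,\zeta']$ rather than with the full chord.
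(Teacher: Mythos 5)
Your proof is correct, and it takes a genuinely different route from the paper's. The paper normalizes the two \emph{points} (translating and dilating so that $\zeta=0$, $\zeta'=1$), which reduces everything to a single universal integrand $\frac{1}{|z|}+\frac{1}{|z-1|}-\bigl|\frac1z+\frac{1}{z-1}\bigr|$; the upper bound is then immediate from the absolute integrability of this fixed function over all of $\C$, and the lower bound from the observation that the (nonnegative, a.e.\ positive) integrand has positive integral over the fixed disk $D(0,\frac{1}{2C+1})$, which the constraints force to lie inside the rescaled domain of integration. You instead normalize the \emph{disk} to $D(0,1)$ and keep $d=|\zeta-\zeta'|$ as a parameter, proving the linear dependence on $d$ by hand: a near/far decomposition with the pointwise bounds $g\le 2\min(|w_1|,|w_2|)$ and $g\ll d^2/\max(|z-\zeta|,|z-\zeta'|)^3$ for the upper bound, and a Thales disk on which the subtended angle is obtuse (so $g\ge(|z-\zeta|+|z-\zeta'|)^{-1}\ge\frac{1}{2d}$) for the lower bound. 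The paper's argument is shorter but leans on an unquantified ``direct calculation''; yours is longer but fully explicit, yields concrete constants ($32\pi+4\pi$ for the upper bound, $\pi L^2/8d$ for the lower), and makes visible exactly where the $C$-dependence enters (only through $d\le C+\tfrac12$ in the lower bound, the upper bound being absolute). Your care in shrinking the Thales disk to the sub-segment of length $\min(d,\tfrac14)$ so that it stays inside $D(0,1)$ uniformly in $C$ is the right move and plays the same role as the paper's containment $D(0,\frac{1}{2C+1})\subset D(z_0,r)$.
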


\begin{proof} By translation we can take $\zeta=0$, and then by complex dilation we can take $\zeta'=1$.  Then we have $|z_0| < r/2$ and $|z_0-1| < Cr$, which imply that $r \geq \frac{1}{C+1/2}$ and hence
$$ D\left(0,\frac{1}{2C+1}\right) \subset D(z_0,r).$$
Direct calculation shows that the integrand
$$ \frac{1}{|z|} + \frac{1}{|z-1|} - \left| \frac{1}{z} + \frac{1}{z-1}  \right|$$
is non-negative, absolutely integrable, and non-vanishing away from the real axis.  The claim follows.
\end{proof}

Putting this together, we can now obtain useful lower and upper bounds on the measure $\Psi$ introduced in \eqref{Psi-def}.

\begin{corollary}[Upper bound on $\Psi$]\label{defect-psi-cor}  Suppose that $r \geq 10\|p\|_1/n$.  For any measurable set $E$ containing a disk $D(z_0, r)$ and $C>0$, we have
$$ \Psi(E) \leq 2(n-1) \r[E] - c_C \Disp \{ \zeta : \zeta \in D(z_0, Cr) \}$$
for some constant $c_C>0$ depending only on $C$.
\end{corollary}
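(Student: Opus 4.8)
The plan is to split $\Psi(E)$ using the triangle inequality \eqref{triangle} and its defect, control the main term by the Riesz rearrangement bound \eqref{multip}, and extract the dispersion from the defect term using Corollary \ref{defect-psi} together with Lemma \ref{tridef}. Concretely, since $|\psi| \leq \sum_\zeta \frac{1}{|z-\zeta|}$ by \eqref{triangle}, write
$$ \Psi(E) = \frac{1}{\pi}\int_E \sum_\zeta \frac{1}{|z-\zeta|}\,dA - \frac{1}{\pi}\int_E \Big(\sum_\zeta \frac{1}{|z-\zeta|} - |\psi|\Big)\,dA. $$
By \eqref{multip} applied to the $n-1$ critical points, the first term is at most $2(n-1)\r[E]$, so it remains to show that the second (nonnegative) term is $\gg_C \Disp\{\zeta \in D(z_0,Cr)\}$.

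To do this I would shrink the domain of integration from $E$ to $D(z_0,r)\subseteq E$ (legitimate since the integrand is nonnegative) and invoke Corollary \ref{defect-psi}, so that the second term is $\gg \frac{1}{n}\sum_{\zeta,\zeta'}\int_{D(z_0,r)}\big(\frac{1}{|z-\zeta|}+\frac{1}{|z-\zeta'|}-|\frac{1}{z-\zeta}+\frac{1}{z-\zeta'}|\big)\,dA(z)$. Keeping only the pairs with $\zeta \in D(z_0,r/2)$ and $\zeta' \in D(z_0,Cr)$ (all summands nonnegative) and applying Lemma \ref{tridef} to each of these pairs gives a lower bound $\gg_C \frac{1}{n}\sum_{\zeta \in D(z_0,r/2),\,\zeta'\in D(z_0,Cr)}|\zeta-\zeta'|$. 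To bound this double sum from below, write $B := \{\zeta : \zeta \in D(z_0,r/2)\}$ with mean $\bar\zeta_B$; the triangle inequality gives $\sum_{\zeta\in B}|\zeta-\zeta'| \geq \#B\,|\bar\zeta_B - \zeta'|$ for each $\zeta'$, and summing over $\zeta'\in D(z_0,Cr)$ and applying \eqref{disp} with reference point $\bar\zeta_B$ yields $\sum_{\zeta\in B,\,\zeta'\in D(z_0,Cr)}|\zeta-\zeta'| \geq \frac{1}{2}\#B\cdot\Disp\{\zeta\in D(z_0,Cr)\}$. Thus everything reduces to showing $\#B \gg n$, and this is exactly where the hypothesis $r \geq 10\|p\|_1/n$ enters: by \eqref{markov}, $\#\{\zeta : \zeta\notin D(0,r/5)\} \leq 5\|p\|_1/r \leq n/2$, so $D(0,r/5)$ contains $\geq n/2 - 1 \gg n$ critical points, and when $|z_0|$ is at most, say, $3r/10$ one has $D(0,r/5)\subseteq D(z_0,r/2)$, whence $\#B \gg n$. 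Collecting the constants from Corollary \ref{defect-psi} (absolute), Lemma \ref{tridef} (depending only on $C$), and the ratio $\#B/n \asymp 1$ gives the claim with a suitable $c_C>0$.

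The main obstacle is the complementary regime in which $z_0$ sits far from the origin relative to $r$, so that $D(z_0,r/2)$ need not contain many critical points (the above argument then produces nothing from the defect term). In that regime I would instead exploit \eqref{markov} in the other direction: $D(z_0,Cr)$ itself contains only $O_C(nr/|z_0|)$ critical points, so $\Disp\{\zeta\in D(z_0,Cr)\}$ is correspondingly small, while the bulk of the $n-1$ critical points lies far from $E \supseteq D(z_0,r)$, so that the Riesz inequality \eqref{multip} already carries slack $\gg_C \Disp\{\zeta\in D(z_0,Cr)\}$; a routine case split according to the size of $\r[E]$ relative to $|z_0|$ (in the case $\r[E]$ is large, one may instead re-run the argument above around a disk centred near the origin that is now contained in $E$) closes this case without using the defect term at all. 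Verifying this slack estimate uniformly over the otherwise unconstrained set $E$, and tracking $c_C$ consistently through both regimes, is the only genuine bookkeeping burden; the conceptual heart is the chain Corollary \ref{defect-psi} $\to$ Lemma \ref{tridef} $\to$ \eqref{disp} described above.
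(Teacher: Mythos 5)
Your argument is essentially identical to the paper's: the same split of $\Psi(E)$ via the triangle inequality, the same use of \eqref{multip} for the main term, and the same chain Corollary \ref{defect-psi} $\to$ restriction to $D(z_0,r)$ $\to$ Lemma \ref{tridef} $\to$ \eqref{disp-sum} (which your triangle-inequality manipulation simply re-derives) for the defect term, with \eqref{markov-many} supplying the $\gg n$ critical points in the inner disk. You are also right that this last step requires $z_0$ to lie within a bounded multiple of $r$ of the origin, so that $D(0,2\|p\|_1/n)\subset D(z_0,r/2)$: the paper's proof asserts the $\gg n$ count from \eqref{markov-many} without comment, implicitly relying on exactly this, and every application of the corollary in the paper indeed takes $z_0=0$. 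Your sketch for the complementary regime of large $|z_0|$ is material the paper does not attempt; as written it is not yet a proof (the claim that the slack in \eqref{multip} for an arbitrary $E\supseteq D(z_0,r)$ is $\gg_C \Disp\{\zeta\in D(z_0,Cr)\}$ needs real verification, and $E$ large need not contain a disk near the origin), but since nothing in the paper uses that regime, this does not affect the correctness of your proof of the result as it is actually applied.
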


Note that this bound is consistent with Heuristic \ref{main-heuristic} and Theorem \ref{stokes}.

\begin{proof}  From \eqref{multip} one has
$$ \frac{1}{\pi} \int_{E} \sum_\zeta \frac{1}{|z-\zeta|}\ dA(z) \leq 2(n-1) \r[E].$$
Hence by \eqref{def} and the triangle inequality, it suffices to show that
$$ \int_{E} \sum_{\zeta,\zeta'} \frac{1}{|z-\zeta|} + \frac{1}{|z-\zeta'|} - \frac{1}{|z-\zeta + z-\zeta'|}\ dA(z) \gg_C n \Disp \{ \zeta : \zeta \in D(z_0, Cr) \}.$$
We may replace $E$ by the smaller set $D(z_0,r)$.  Applying Lemma \ref{tridef}, the left-hand side is
$$ \gg_C \sum_{\zeta \in D(z_0,r/2); \zeta' \in D(z_0,Cr)} |\zeta-\zeta'|.$$
By \eqref{markov-many} and the hypothesis $r \geq 10\|p\|_1/n$, we have $\gg n$ critical points $\zeta$ in $D(z_0,r/2)$.  The claim now follows from \eqref{disp-sum}.
\end{proof}

We also record the following variant:

\begin{lemma}[Lower bound on $\Psi$]\label{ankh}  For any $r>0$, we have
$$ \Psi(\Ann(0,r/2,r))\geq (n-1) r - O(\|p\|_1).$$
\end{lemma}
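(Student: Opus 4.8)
The plan is to prove this lower bound by slicing the annulus into circles $\partial D(0,\rho)$ with $r/2 \le \rho \le r$, bounding $\int_{\partial D(0,\rho)}|\psi|$ from below on each circle by the argument principle, and then integrating in $\rho$. Write $N(\rho) \coloneqq \#\{\zeta : |\zeta| < \rho\}$ for the number of critical points (counted with multiplicity) in the open disk $D(0,\rho)$. First I would observe that for any $\rho>0$ not equal to $|\zeta|$ for any critical point $\zeta$ --- that is, for all but finitely many $\rho$ --- the partial fraction expansion \eqref{psi-expand} and the residue theorem give
$$ \int_0^{2\pi} \psi(\rho e^{i\theta})\, \rho e^{i\theta}\, d\theta = \frac{1}{i} \oint_{\partial D(0,\rho)} \psi(z)\, dz = 2\pi N(\rho), $$
so that by the triangle inequality for integrals,
$$ \int_0^{2\pi} |\psi(\rho e^{i\theta})|\, d\theta \geq \frac{2\pi}{\rho}\, N(\rho). $$

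Next I would integrate this estimate against $\rho\, d\rho$ over $[r/2,r]$. Since the density $|\psi|$ of $\Psi$ is locally integrable (its poles contribute only the integrable singularity $1/|z-\zeta|$ in the plane), polar coordinates and Tonelli's theorem give
$$ \Psi(\Ann(0,r/2,r)) = \frac{1}{\pi} \int_{r/2}^{r} \left( \int_0^{2\pi} |\psi(\rho e^{i\theta})|\, d\theta \right) \rho\, d\rho \geq 2 \int_{r/2}^{r} N(\rho)\, d\rho. $$
To finish, I would bound $N(\rho)$ from below: Markov's inequality \eqref{markov} gives $\#\{\zeta : |\zeta| \ge \rho\} \le \|p\|_1/\rho \le 2\|p\|_1/r$ for every $\rho \ge r/2$, whence $N(\rho) \ge (n-1) - 2\|p\|_1/r$ throughout $[r/2,r]$. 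Substituting this into the integral (whose domain has length $r/2$) yields
$$ \Psi(\Ann(0,r/2,r)) \geq 2 \cdot \frac{r}{2}\left( (n-1) - \frac{2\|p\|_1}{r} \right) = (n-1) r - 2\|p\|_1, $$
which is the desired estimate, with an explicit implied constant.

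I do not expect any serious obstacle here. The one point that deserves care is the exceptional set of radii $\rho = |\zeta|$ at which $\psi$ has a pole on the circle $\partial D(0,\rho)$: there $\int_0^{2\pi} |\psi(\rho e^{i\theta})|\, d\theta$ may diverge, but this set of $\rho$ is finite, hence Lebesgue-null, so it is irrelevant to the $\rho$-integration and one simply applies the circlewise bound for almost every $\rho$. Everything else is elementary, the only substantive inputs being the residue computation and the Markov bound \eqref{markov}.
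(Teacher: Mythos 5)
Your proof is correct. The residue computation is right: for all but finitely many $\rho$ the circle $\partial D(0,\rho)$ avoids the critical points, \eqref{psi-expand} gives $\frac{1}{i}\oint_{\partial D(0,\rho)}\psi\,dz = 2\pi N(\rho)$, and the triangle inequality then yields $\int_0^{2\pi}|\psi(\rho e^{i\theta})|\,d\theta \geq 2\pi N(\rho)/\rho$; combining with polar coordinates and the Markov bound \eqref{markov} (which gives $N(\rho)\geq (n-1)-2\|p\|_1/r$ on $[r/2,r]$) produces exactly $(n-1)r-2\|p\|_1$. The exceptional null set of radii is handled correctly, and local integrability of $|\psi|$ justifies Tonelli.

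This is a genuinely different route from the paper's. The paper argues pointwise: it writes $|\psi(z)|\geq \frac{n-1}{|z|}-\sum_\zeta\bigl|\frac{1}{z}-\frac{1}{z-\zeta}\bigr|$, integrates the main term exactly over the annulus to get $(n-1)\pi r$, and controls each error term by $O(|\zeta|)$ — via the Riesz potential bound \eqref{riesz-bound} when $|\zeta|\geq r/4$ and a pointwise estimate $O(|\zeta|/r^2)$ otherwise — so the total error is $O(\|p\|_1)$ by \eqref{semi-norm}. Your argument-principle version is arguably cleaner for this particular statement and yields the explicit constant $2$ in the error term; it also only ``charges'' critical points lying outside the circle of integration, rather than paying $O(|\zeta|)$ for every critical point. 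The paper's pointwise lower bound is more flexible in that it applies verbatim to regions other than annuli centered at the origin, but for the lemma as stated both approaches deliver the same conclusion with comparable effort.
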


\begin{proof} By \eqref{psi-expand} and the triangle inequality, we have
$$ |\psi(z)| \geq \frac{n-1}{|z|} - \sum_\zeta \left| \frac{1}{z} - \frac{1}{z-\zeta}\right|.$$
Since
$$ \int_{\Ann(0,r/2,r)} \frac{n-1}{|z|}\ dA = (n-1) \pi r,$$
it thus suffices by \eqref{Psi-def}, \eqref{semi-norm} to show that
$$ \int_{\Ann(0,r/2,r)} \left| \frac{1}{z} - \frac{1}{z-\zeta}\right|\ dA(z) \ll |\zeta|$$
for all $\zeta$.  For $|\zeta| \geq r/4$ this follows from \eqref{riesz-bound} and the triangle inequality, while for $|\zeta| < r/4$ it follows from the observation that the integrand is pointwise bounded by $O(|\zeta|/r^2)$.
\end{proof}

Finally, we record a simple lower bound:

\begin{lemma}[Lower bound]\label{lower-triangle}  If $z_1,\dots,z_m$ are nonzero complex numbers with $\arg z_i \in I \pmod{2\pi}$ for all $i=1,\dots,m$ and some interval $I$ of length at most $\pi - \alpha$, then
$$ \left| \sum_{i=1}^m z_i \right| \geq \sin(\alpha/2) \sum_{i=1}^m |z_i|.$$
\end{lemma}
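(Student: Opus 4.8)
The plan is to rotate the whole configuration so that the interval $I$ is symmetric about the origin, and then bound the modulus of the sum from below by its real part. Since both $\left|\sum_{i} z_i\right|$ and each $|z_i|$ are unchanged under multiplication by a unit complex number, I would first let $\theta_0$ be the midpoint of the interval $I$ and replace each $z_i$ by $e^{-i\theta_0}z_i$; after this reduction we may assume $\arg z_i \in [-\tfrac{\pi-\alpha}{2}, \tfrac{\pi-\alpha}{2}] \pmod{2\pi}$ for every $i$. We may also assume $0 \le \alpha \le \pi$, since otherwise either there is no interval of the stated length (so the hypothesis is vacuous) or $\sin(\alpha/2)\le 0$ and the inequality is trivial.

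The key step is the pointwise bound on real parts: for each $i$ we have $|\arg z_i| \le \tfrac{\pi-\alpha}{2} < \tfrac{\pi}{2}$, and since $\cos$ is decreasing on $[0,\pi/2]$ this gives
$$ \Re z_i = |z_i|\cos(\arg z_i) \ge |z_i|\cos\!\left(\tfrac{\pi-\alpha}{2}\right) = |z_i|\sin(\alpha/2). $$
Summing over $i$ and using $\left|\sum_i z_i\right| \ge \Re\sum_i z_i = \sum_i \Re z_i$ then yields the claim:
$$ \left|\sum_{i=1}^m z_i\right| \ge \sum_{i=1}^m \Re z_i \ge \sin(\alpha/2)\sum_{i=1}^m |z_i|. $$

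There is no real obstacle here; the only points requiring a moment's care are the reduction to the symmetric interval (which is exploited precisely because the conclusion only involves moduli, hence is rotation-invariant) and the degenerate ranges of $\alpha$, both of which are handled above.
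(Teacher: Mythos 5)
Your proposal is correct and follows essentially the same route as the paper's proof: rotate so that $I$ is symmetric about the positive real axis, bound $\Re z_i \geq \sin(\alpha/2)\,|z_i|$ pointwise, and sum. The extra remarks about degenerate ranges of $\alpha$ are fine but not needed.
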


\begin{proof} By rotating we may take $I$ to be a subset of $[-(\pi-\alpha)/2, (\pi-\alpha)/2]$.  We then have
    $$ \Re z_i \geq \sin(\alpha/2) |z_i|$$
for all $i=1,\dots,m$, and the claim follows by summing in $i$.
\end{proof}

\section{An application of Stokes' theorem}\label{stokes-sec}

In this section we prove Theorem \ref{stokes}. Our arguments here are inspired by the calculations in \cite[\S 4,9]{fryntov}, but use smooth cutoffs in place of the localization to squares that is performed in \cite[\S 9]{fryntov}.  We abbreviate $E_R(p)$ as $E_R$.

In \cite[\S 4]{fryntov} it is observed that one has the Stokes identity
$$ \Re \int_\gamma \frac{\overline{s(z)}\ dz}{i} = 2 \Re \int_E \partial s\ dA$$
whenever $\gamma$ is a closed piecewise smooth curve oriented anticlockwise around a region $E$, and $s$ is a locally bounded function whose derivative $\partial s$ is \emph{quasismooth} in the sense that it is smooth away from a finite set of singularities and whose gradient is locally integrable.  In particular, we have (see \cite[(4)]{fryntov})
\begin{equation}\label{stokes-ident}
     \ell(\partial E_1) = 2 \Re \int_{E_1} \partial s\ dA
\end{equation}
whenever $s$ is bounded, has quasismooth derivative, and coincides on $\partial E_1$ with the outward unit normal to $E_1$ outside of critical points.  This outward unit normal can be computed to be $|\varphi|/\varphi$, where $\varphi \coloneqq p'/p$ was introduced in \eqref{phi-def}.

If we integrate on $E_1 \cap \Omega$ for some semi-algebraic open set $\Omega$, and assume that $s$ is bounded by $O(1)$ on $E_1 \cap \overline{\Omega}$, then the boundary $\partial E_1 \cap \Omega$, being a semi-algebraic curve, is piecewise smooth, and we obtain the variant
\begin{equation}\label{stokes-ident-2}
     \ell(\partial E_1 \cap \overline{\Omega}) = 2 \Re \int_{E_1 \cap \Omega} \partial s\ dA + O(X_5)
\end{equation}
thanks to \eqref{X5-def}.

In \cite{fryntov}, the identity \eqref{stokes-ident} was applied to the functions $s = |\varphi|/\varphi$ and $s = |p\varphi|/\varphi$ to obtain the identities \eqref{length-1} and \eqref{length-2} respectively.  Here, we will adopt the slightly different choice
$$ s \coloneqq \frac{|\varphi|}{\varphi} \eta(|p|)$$
where $\eta: \R \to [0,1]$ is a fixed smooth cutoff to $[1/2,2]$ that equals $1$ on a neighborhood of $1$.
It is easy to see that $s$ is bounded by $O(1)$, has quasismooth derivative, and coincides with the outward unit normal $|\varphi|/\varphi$ on $\partial E_1$ away from critical points, so that \eqref{stokes-ident-2} applies.
From \eqref{wirt-5}, \eqref{wirt-6} and the product rule one has
$$
\partial s = -\frac{|\varphi|}{\varphi} \frac{\varphi'}{2\varphi} \eta(|p|) + \frac{|\varphi|}{\varphi} \eta'(|p|) \frac{\overline{p} p'}{2|p|}.
$$
The second term on the right-hand side is $O(|p'|)$.  Thus by \eqref{X1-def} and \eqref{stokes-ident-2} we have
$$ \ell(\partial E_1 \cap \Omega) \leq - \Re \int_{E_1} \frac{|\varphi|}{\varphi} \frac{\varphi'}{\varphi} \eta(|p|)\ dA + O( X_1 + X_5 )$$
(cf. \eqref{length-1}).
By \eqref{log-phi-expand} we have
$$ \frac{\varphi'}{\varphi} = \psi - \frac{p'}{p} = \psi + O(|p'|)$$
on the support of $\eta(|p|)$, thus by \eqref{X1-def} again we have
$$ \ell(\partial E_1 \cap \Omega) \leq - \Re \int_{E_2 \cap \Omega} \frac{|\varphi|}{\varphi} \psi \eta(|p|) 1_{E_1} dA + O( X_1 + X_5 )$$
(cf., \eqref{length-2}).
  Let $\sigma: \R \to [0,1]$ be a smooth cutoff to $[1/2,\infty)$ that equals $1$ on $[1,\infty)$ with derivative bounds $\sigma' = O(1)$.
By \eqref{X2-def}, we have
$$ \int_{E_2 \cap \Omega} \frac{|\varphi|}{\varphi} \psi \eta(|p|) 1_{E_1} \left(1-\sigma\left(\frac{\psi}{\lambda}\right)\right) \ dA \ll X_2$$
and thus
$$ \ell(\partial E_1 \cap \Omega) \leq - \Re \int_{E_2 \cap \Omega} \frac{|\varphi|}{\varphi} \psi \eta(|p|) 1_{E_1} \sigma\left(\frac{\psi}{\lambda}\right)\ dA + O( X_1 + X_2 + X_5 ).$$
Now we perform some manipulations inspired by \cite[\S 9]{fryntov}.  We factor
$$ \frac{|\varphi|}{\varphi} \psi = \frac{|p'|}{p'} \frac{p}{|p|} \frac{\psi}{|\psi|} |\psi| = e^{-i (\arg p' - \arg p - \arg \psi)} |\psi|$$
noting that the cutoffs force $p,p'$ to be non-zero; the argument is only defined up to multiples of $2\pi$, but this will not affect the calculations.  Thus we have
$$ \ell(\partial E_1 \cap \Omega) \leq - \int_{E_2 \cap \Omega} \cos(\arg p' - \arg p - \arg \psi) \eta(|p|) 1_{E_1} \sigma\left(\frac{\psi}{\lambda}\right) |\psi|\ dA + O( X_1 + X_2 + X_5 ).$$
Using the pointwise bound
$$ -\cos(\theta) 1_F \leq \cos_-(\theta)$$
for any indicator function $F$ and real number $\theta$, where $\cos_-(x) \coloneqq \max(-\cos(x),0)$, we conclude that
$$ \ell(\partial E_1 \cap \Omega) \leq \int_{E_2 \cap \Omega} \cos_-(\arg p' - \arg p - \arg \psi) \eta(|p|) \sigma\left(\frac{\psi}{\lambda}\right) |\psi|\ dA + O( X_1 + X_2 + X_5 ).$$
From \eqref{Psi-def} we have
$$ \int_{E_2 \cap \Omega} \frac{1}{\pi} \eta(|p|) \sigma\left(\frac{\psi}{\lambda}\right) |\psi|\ dA \leq  \Psi(E_2 \cap \Omega),$$
and so it suffices to show that
\begin{equation}\label{cos}
 \int_{E_2 \cap \Omega} \left(\cos_-(\arg p' - \arg p - \arg \psi)-\frac{1}{\pi}\right) \eta(|p|) \sigma\left(\frac{\psi}{\lambda}\right) |\psi|\ dA \ll X_1 + X_3 + X_4 + X_5.
\end{equation}
The function $x \mapsto \cos_-(x) - 1/\pi$ is $2\pi$-periodic with mean zero (cf. Figure \ref{fig:cos}).  Thus it has an antiderivative $F: \R \to \R$ that is also $2\pi$-periodic.  Observe from \eqref{wirt-9} and the chain rule that
\begin{align*}
    \partial F(\arg p' - &\arg p - \arg \psi) = \frac{1}{2i} F'(\arg p' - \arg p - \arg \psi) \left(\frac{p''}{p'} - \frac{p'}{p} - \frac{\psi'}{\psi} \right) \\
    &= \frac{1}{2i} \left(\cos_-(\arg p' - \arg p - \arg \psi)-\frac{1}{\pi}\right) \left(\psi + O(|p'|) + O\left(\frac{|\psi'|}{|\psi|}\right)\right)
\end{align*}
on the support of $\eta(|p|)$.  We therefore can write the integrand
$$ \left(\cos_-(\arg p' - \arg p - \arg \psi)-\frac{1}{\pi}\right) \eta(|p|) \sigma\left(\frac{\psi}{\lambda}\right) |\psi|$$
as
$$ 2i \frac{\eta(|p|) \sigma\left(\frac{\psi}{\lambda}\right) |\psi|}{\psi} \partial F(\arg p' - \arg p - \arg \psi)
+ O( |p'| ) + O\left(\frac{|\psi'|}{|\psi|}\right)$$
and thus by \eqref{X1-def}, \eqref{X3-def} one can majorize the left-hand side of \eqref{cos} by
$$ \ll \left| \int_{E_2 \cap \Omega} \frac{ \eta(|p|) \sigma\left(\frac{\psi}{\lambda}\right) |\psi|}{\psi}\partial F(\arg p' - \arg p - \arg \psi)\ dA \right| + X_1 + X_3.$$
The function $\frac{ \eta(|p|) \sigma\left(\frac{\psi}{\lambda}\right) |\psi|}{\psi}$ is bounded with quasismooth derivative and thus suitable for integration by parts.  Performing this integration, and noting that the only non-vanishing boundary contribution comes from $\partial \Omega$, and is $O(X_5)$ by \eqref{X5-def} and the boundedness of the both factors in the integration by parts formula, we can bound this by
$$ \ll \left| \int_{E_2 \cap \Omega} \left(\partial \left(\frac{\sigma\left(\frac{\psi}{\lambda}\right) |\psi|}{\psi} \eta(|p|)\right)\right) F(\arg p' - \arg p - \arg \psi)\ dA \right| + X_1 + X_3 + X_5.$$
The integrand is supported on the region where $|\psi| \geq \lambda/2$, and we can bound
$$ \partial\left( \frac{ \eta(|p|) \sigma\left(\frac{\psi}{\lambda}\right) |\psi|}{\psi}\right) \ll \frac{|\psi'|}{|\psi|} + \frac{|\lambda'|}{|\lambda|} + |p'|$$
(recalling that the cutoff $\eta(|p|)$ localizes to the region $|p| \asymp 1$).
Since $F$ is bounded, we can use \eqref{X1-def}, \eqref{X3-def}, \eqref{X4-def} to bound this expression by $O(X_1 + X_3 + X_4 + X_5)$ as desired.

\section{An initial bound on lemniscate length}\label{main-i-sec}

We now have all the tools to quickly prove Theorem \ref{main-thm}(i).  Let $p$ be a normalized maximizer of degree $n$.  We abbreviate $E_R(p)$ as $E_R$. From \eqref{polya-r} (or \eqref{area-formula-alt}) we have
\begin{equation}\label{e2-upper}
    \r[E_2] \leq \r[E_4] \leq 4^{1/n} = 1 + O\left( \frac{1}{n} \right).
\end{equation}
(The bounds on $E_4$ are not needed for this section, but will be useful in the next one.)

We apply Theorem \ref{stokes} with the constant function choice $\lambda = \sqrt{n}$ and $\Omega = \C$.  From \eqref{X4-def}, \eqref{X5-def} we have
$$ X_4 = X_5 = 0.$$
From \eqref{X2-def}, \eqref{e2-upper}, \eqref{E-area} we have
$$ X_2 \leq \sqrt{n} \pi \r[E_2]^2 \ll \sqrt{n}.$$
From \eqref{X1-def}, \eqref{e2-upper}, \eqref{area-form}, \eqref{E-area} and Cauchy--Schwarz we have
$$ X_1 \ll \sqrt{2\pi n \times 2^2} (\pi \r[E_2]^2)^{1/2} \ll \sqrt{n}.$$
From \eqref{X3-def}, \eqref{multip}, \eqref{log-psi-expand}, \eqref{e2-upper} we have
$$ X_3 \leq \frac{1}{\sqrt{n}} \int_{E_2} \sum_\zeta \frac{1}{|z-\zeta|} + \sum_{p''(\xi)=0} \frac{1}{|z-\xi|}\ dA(\xi) \ll n^{1/2} \r[E_2] \ll \sqrt{n}.$$
Theorem \ref{stokes} then gives
\begin{equation}\label{psi2}
\ell(\partial E_1) \leq \Psi(E_2) + O(\sqrt{n}).
\end{equation}
On the other hand, from \eqref{Psi-def}, \eqref{triangle}, \eqref{multip}, \eqref{e2-upper} we have
\begin{equation}\label{2n}
\Psi(E_2)= \frac{1}{\pi} \int_{E_2} |\psi|\ d A \leq
 \frac{1}{\pi} \sum_\zeta \int_{E_2} \frac{1}{|z-\zeta|}\ d A(z) \leq
 2 \r[E_2] (n-1) = 2n + O(1).
\end{equation}
Combining \eqref{psi2} and \eqref{2n}, we obtain Theorem \ref{main-thm}(i).

\section{An improved bound}\label{main-ii-sec}

In this section we refine the analysis of the preceding section to obtain Theorem \ref{main-thm}(ii).  Again, let $p$ be a normalized maximizing polynomial of degree $n$, and abbreviate $E_R(p)$ as $E_R$.  We may take $n$ to be sufficiently large, since the claim follows from (say) part (i) otherwise.

As $p$ is a normalized maximizer, we have
\begin{equation}\label{lupin}
 \ell(\partial E_1) \geq \ell(\partial E_1(p_0)) = 2n + 4 \log 2 + o(1)
\end{equation}
thanks to \eqref{ep0}.  Comparing this with \eqref{psi2}, \eqref{2n}, we conclude that
\begin{equation}\label{e2}
 \Psi(E_2) = 2n + O(1) - K
\end{equation}
for some $K$ with
\begin{equation}\label{ksqr}
1 \leq K \ll \sqrt{n}.
\end{equation}

The quantity $K$ measures the gain in the main term of Theorem \ref{stokes}; the strategy will be to show that the error terms $X_1,\dots,X_5$ in that theorem are of lower order than $K$.  To do this, we use several of the tools from previous sections to control some other quantities in terms of $K$.

\begin{lemma}[$K$ controls the geometry of $p$]\label{geomcontrol}\
    \begin{itemize}
        \item[(i)]  ($E_4$ is nearly round) We have
\begin{equation}\label{incl}
 D(0, 1-o(1)) \subset E_4 \subset D(0,1+o(1))
\end{equation}
with the perimeter bound
\begin{equation}\label{perimeter}
    \ell(\partial E_4) \ll K^{1/2}.
\end{equation}
In particular, from the Gauss--Lucas theorem we have
\begin{equation}\label{crit-bound}
    |\zeta| \leq 1 + o(1)
\end{equation}
for all critical points $\zeta$.
\item[(ii)]  (Dispersion bound)  We have (cf. Heuristic \ref{main-heuristic})
\begin{equation}\label{first}
\Disp \{\zeta\} = \|p\|_1 = \sum_\zeta |\zeta| \ll K.
\end{equation}
In particular, from \eqref{markov} one has
\begin{equation}\label{first-markov}
    \# \{ \zeta: \zeta \notin D(0,r) \} \ll \frac{K}{r}
\end{equation}
for all $r>0$.
\item[(iii)] (Exponential decay of $p'$ away from $\partial E_4$)  For any $z_0 \in E_4$, we have
\begin{equation}\label{exp-decay}
    p'(z_0) \ll \frac{e^{O(K)-c n \dist(z_0, \partial E_4)}}{\dist(z_0, \partial E_4)}
\end{equation}
for some absolute constant $c>0$.
\end{itemize}
\end{lemma}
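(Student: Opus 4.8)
The plan is to establish the three parts in order, with part (i) feeding into part (ii) and both feeding into part (iii).

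\emph{Part (i).} Combining the identity \eqref{e2} with the elementary bound $\Psi(E_2)\le 2(n-1)\r[E_2]$ coming from \eqref{triangle}, \eqref{multip} forces $\r[E_2]\ge 1-O(K/n)$; since $\r[E_2]\le\r[E_4]\le 4^{1/n}$, both area-equivalent radii are $1+o(1)$, and \eqref{area-formula-2} then yields $\sum_k k|a_k|^2 R^{-2k/n}=O(K/n)=o(1)$ for $R=2,4$ (recall $K=o(n)$ by \eqref{ksqr}). I would next show $\sum_k|a_k|R^{-k/n}=o(1)$ by splitting the sum at $k\asymp n^2$: for the head, apply Cauchy--Schwarz together with the $O(K/n)$ bound just obtained; for the tail, use the classical area bound $\sum_k k|a_k|^2\le 1$ (so $|a_k|\le k^{-1/2}$) and the geometric decay of $R^{-k/n}$. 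This collapses the inclusions \eqref{inclusions} to \eqref{incl}. For \eqref{perimeter} I would bound $\sum_k k^2|a_k|^2 4^{-2k/n}\le\bigl(\sup_k k\,2^{-2k/n}\bigr)\sum_k k|a_k|^2 2^{-2k/n}=O(n)\cdot O(K/n)=O(K)$ and feed this into \eqref{arc-bound}. Finally \eqref{crit-bound} is Gauss--Lucas: the zeroes of $p$ lie in $E_1\subseteq E_4\subseteq D(0,1+o(1))$, hence so does their convex hull.

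\emph{Part (ii).} This is the first rigorous incarnation of Heuristic \ref{main-heuristic}(i). If some disk $D(0,r)\subseteq E_2$ satisfies $r\ge 10\|p\|_1/n$, then Corollary \ref{defect-psi-cor}, applied with $C$ large enough that $D(0,Cr)$ contains every critical point (by \eqref{crit-bound}), gives $\Psi(E_2)\le 2(n-1)\r[E_2]-c\,\Disp\{\zeta\}=2(n-1)\r[E_2]-c\|p\|_1$; comparison with \eqref{e2} and $2(n-1)\r[E_2]\le 2n+O(1)$ then delivers \eqref{first}, and \eqref{first-markov} is just \eqref{markov}. By \eqref{incl} the disk hypothesis holds with $r=1/2$ as soon as $\|p\|_1=o(n)$, so the whole of (ii) reduces to that \emph{a priori} estimate. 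I would obtain it by sharpening $\Psi(E_2)\le 2(n-1)\r[E_2]$ using the $\ell^2$-defect of \eqref{riesz-bound}: by \eqref{incl} the set $E_2$ agrees with the unit disk outside an annulus of area $o(1)$, so for each critical point $\zeta\in\overline{D(0,1+o(1))}$ one has $\int_{E_2}\frac{dA(z)}{|z-\zeta|}\le 2\pi\r[E_2]-c|\zeta|^2+o(1)$, the constant $c>0$ reflecting that the Riesz potential of the unit disk is strictly maximised at its centre; summing over the $n-1$ critical points (the $o(1)$ errors accumulating to only $o(n)$, again by \eqref{multip}) and invoking \eqref{e2}, \eqref{ksqr} gives $\|p\|_2=o(n)$, whence $\|p\|_1\le((n-1)\|p\|_2)^{1/2}=o(n)$ by Cauchy--Schwarz.

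\emph{Part (iii).} I would apply Lemma \ref{improv} with $s\coloneqq\dist(z_0,\partial E_4)$, so that $D(z_0,s)\subseteq E_4$ and $\sup_{D(z_0,s)}|p|<4$. From \eqref{incl} one has $|z_0|+s\asymp 1$: if $|z_0|\le 1-o(1)$ then $s\ge 1-o(1)-|z_0|$, and otherwise $|z_0|\ge 1-o(1)$. Hence $|z_0|+s/10$ exceeds a fixed positive constant, whereas $2\|p\|_1/n=o(1)$ by \eqref{first}, so \eqref{zr} holds for $n$ large. Now \eqref{cauchy-improv} reads $|p'(z_0)|\ll e^{-c'ns+O(\|p\|_1)}/s$ for some $c'>0$, and inserting $\|p\|_1\ll K$ from part (ii) gives \eqref{exp-decay}.

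The step I expect to be the main obstacle is the \emph{a priori} bound $\|p\|_1=o(n)$ in part (ii). Before it is available, no disk of the radius $10\|p\|_1/n$ demanded by Corollary \ref{defect-psi-cor} need fit inside $E_2\subseteq D(0,1+o(1))$, while the only unconditional bound, $\|p\|_1\le(n-1)\max_\zeta|\zeta|=O(n)$ from \eqref{crit-bound}, is useless. One is thus forced to first extract the inferior $\ell^2$ dispersion $\|p\|_2$ (cf.\ Remark \ref{l2}), which is where the roundness statement \eqref{incl} of part (i) enters essentially; an alternative, somewhat more laborious route would be to argue directly that a fixed proportion of $\ell^1$ dispersion must depress $\Psi(E_2)$ below $(2-\delta)n$, contradicting \eqref{e2} and \eqref{ksqr}.
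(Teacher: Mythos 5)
Your proposal is correct, and parts (i) and (iii) are essentially the paper's own arguments: (i) runs the same chain \eqref{e2} $\Rightarrow \r[E_2] \geq 1 - O(K/n) \Rightarrow$ \eqref{area-formula-2} $\Rightarrow$ Cauchy--Schwarz $\Rightarrow$ \eqref{inclusions}, \eqref{arc-bound} (the paper dispenses with your tail split at $k \asymp n^2$ by keeping the weight $4^{-k/n}$ inside the Cauchy--Schwarz so that the harmonic factor is only $O(\log n)$, but your version is equally valid and has the side benefit of giving $D(0,1-o(1)) \subset E_2$ as well as $E_4$, which is convenient in part (ii)); and (iii) is the same application of Lemma \ref{improv} with $s = \dist(z_0,\partial E_4)$, minus the paper's reduction to $|z_0| \geq 1/2$ via the maximum principle, which your direct verification of \eqref{zr} renders unnecessary. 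The genuine divergence is in part (ii). The paper deduces \eqref{first} by a bare citation of Corollary \ref{defect-psi-cor}, but, exactly as you observe, that corollary (whose proof rests on \eqref{markov-many}) needs a disk $D(0,r) \subset E_2$ with $r \geq 10\|p\|_1/n$, and since the only unconditional bound at that stage is $\|p\|_1 \leq (n-1)(1+o(1))$ from \eqref{crit-bound}, no such disk need fit inside $E_2 \subset D(0,1+o(1))$. Your preliminary step --- extracting $\|p\|_2 = o(n)$ from the $\ell^2$-defect of the rearrangement inequality \eqref{riesz-bound} on the nearly-round set $E_2$, then $\|p\|_1 \leq ((n-1)\|p\|_2)^{1/2} = o(n)$ by Cauchy--Schwarz --- supplies precisely the missing a priori input; it is the defect argument that the remark in Section \ref{riesz-sec} alludes to and otherwise avoids. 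The only price is the routine verification that $\int_{D(0,1)} |z-\zeta|^{-1}\, dA(z) \leq 2\pi - c|\zeta|^2$ for $|\zeta| \leq 1+o(1)$, which does hold (the potential is a convolution of two radially decreasing functions, hence radially decreasing, equals $4E(|\zeta|)$ in terms of the complete elliptic integral for $|\zeta| \leq 1$, and is at most $4 < 2\pi - c$ near the unit circle); the gain is that the application of Corollary \ref{defect-psi-cor} is then actually licensed.
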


\begin{proof}
By comparing \eqref{2n} with \eqref{e2} we have
$$ \r[E_2] \geq 1 - O\left(\frac{K}{n}\right).$$
Inserting this into the area theorem \eqref{area-formula-2}, we obtain
$$ \sum_{k=1}^\infty \frac{k |a_k|^2}{2^{2k/n}} \ll \frac{K}{n}.
$$
In particular, by Cauchy--Schwarz and \eqref{ksqr} we have
$$ \sum_{k=1}^\infty \frac{|a_k|}{4^{k/n}} \ll \frac{K^{1/2} \log^{1/2} n}{n^{1/2}} \ll \frac{\log^{1/2} n}{n^{1/4}} = o(1).$$
Applying \eqref{inclusions} we obtain \eqref{incl}.  Similarly, we have
$$ \sum_{k=1}^\infty \frac{k^2 |a_k|^2}{2^{4k/n}}  \ll n  \sum_{k=1}^\infty \frac{k |a_k|^2}{2^{2k/n}} \ll K,$$
and the claim \eqref{perimeter} then follows from \eqref{arc-bound}.

From Corollary \ref{defect-psi-cor} and \eqref{incl}, \eqref{crit-bound} one has
$$\Psi(E_2) \leq 2(n-1) \r[E_2] - c \Disp \{\zeta\}$$
for some absolute constant $c>0$.  From this and \eqref{disp-zeta} we conclude (ii).

Finally, for (iii) it suffices by the maximum principle (and \eqref{incl}) to verify the claim for $z_0 \in E_4 \backslash D(0,1/2)$. We apply from Lemma \ref{improv} with $s = \dist(z_0, \partial E_4)$ (the hypothesis \eqref{zr} being satisfied thanks to \eqref{ksqr}).  Since $D(z_0,s) \subset E_4$, we have $\sup_{z \in D(z_0,s)} |p(z)| \leq 4$, and hence Lemma \ref{improv} and \eqref{first} gives the desired bound (iv).
\end{proof}

Let $0 < \rho < 1/2$ be a small radius to be chosen later.
We will apply Theorem \ref{stokes} with $\Omega=\C \backslash \overline{D(0,\rho)}$ and the function
$$ \lambda(z) \coloneqq \frac{\mu n}{|z|}$$
where $0 < \mu < 1$ is a small parameter to be chosen later.  By \eqref{X5-def}, we have
\begin{equation}\label{x5}
    X_5 = 2\pi \rho.
\end{equation}
 By \eqref{X4-def} and direct calculation we conclude that
\begin{equation}\label{x4}
 X_4 \ll 1.
\end{equation}

Now we control the other error terms $X_1,X_2,X_3$.  Some of the bounds established here will be stated in more generality than needed for the current application, as they will also be useful in the next section.

\begin{lemma}[Control on $X_1$]  We have
    $$ X_1 \ll K^{7/8}.$$
\end{lemma}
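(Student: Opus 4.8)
The plan is to discard the (harmless) restriction to $\C \setminus \overline{D(0,\rho)}$, which only decreases $X_1$, and to bound $\int_{E_2} |p'|\ dA$ directly by splitting $E_2 \subset E_4$ at the threshold distance $t \coloneqq CK/n$ from the boundary $\partial E_4$, where $C$ is a large absolute constant. Write $A \coloneqq \{z \in E_2 : \dist(z, \partial E_4) \leq t\}$ and $B \coloneqq E_2 \setminus A$. On the thin collar $A$ I would estimate $|p'|$ in mean square via the area identity \eqref{area-form}, while on the interior region $B$ I would exploit the exponential decay \eqref{exp-decay} of $p'$ away from $\partial E_4$.

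First, for the interior region: if $z \in B$ then $\dist(z, \partial E_4) > CK/n$, so taking $C$ large enough (depending only on the implied constant in \eqref{exp-decay}) and using $K \gg 1$, the estimate \eqref{exp-decay} simplifies to $|p'(z)| \ll e^{-c'n\,\dist(z, \partial E_4)}/\dist(z, \partial E_4)$ for some absolute $c' > 0$. Set $\nu(s) \coloneqq |\{z \in E_4 : \dist(z, \partial E_4) \leq s\}|$; the elementary tube bound $\nu(s) \ll \ell(\partial E_4)\, s + s^2$, combined with the perimeter bound \eqref{perimeter} and the fact (from \eqref{incl}) that $\nu$ is supported on $s \ll 1$, yields $\nu(s) \ll K^{1/2} s$ for the relevant range $0 < s \ll 1$. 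Applying the coarea formula to the $1$-Lipschitz function $z \mapsto \dist(z, \partial E_4)$ and then integrating by parts in $s$,
\begin{equation*}
\int_B |p'|\ dA \ll \int_t^\infty \frac{e^{-c'ns}}{s}\ d\nu(s) \ll K^{1/2} \int_t^\infty e^{-c'ns}\left(c'n + \frac{1}{s}\right)\ ds \ll K^{1/2} e^{-c'nt} = K^{1/2} e^{-c'CK} \ll 1,
\end{equation*}
where we used $nt = CK \gg 1$ — which requires $K \ll \sqrt{n}$, cf.~\eqref{ksqr} — together with $\sup_{x \geq 1} x^{1/2} e^{-x} \ll 1$.

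Next, for the collar $A$: Cauchy--Schwarz and \eqref{area-form} give $\int_A |p'|\ dA \leq |A|^{1/2} \left(\int_{E_2} |p'|^2\ dA\right)^{1/2} = (4\pi n)^{1/2} |A|^{1/2}$. Since $|A| \leq \nu(t) \ll K^{1/2} t + t^2 \ll K^{3/2}/n + K^2/n^2 \ll K^{3/2}/n$ — again using $K \ll \sqrt{n}$, so that $K^2/n^2 \ll 1/n \leq K^{3/2}/n$ — it follows that $\int_A |p'|\ dA \ll (K^{3/2}/n)^{1/2} n^{1/2} = K^{3/4}$. Combining the two regions gives $X_1 \ll K^{3/4} + 1 \ll K^{3/4} \leq K^{7/8}$, which is the asserted bound (in fact slightly stronger than stated).

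The step I expect to be the main obstacle is the interior estimate on $B$: one must ensure the exponential gain in \eqref{exp-decay} genuinely dominates the $e^{O(K)}$ factor there, which is why the cutoff $t$ must be taken to be a large multiple of $K/n$ rather than the naive $1/n$, and one must then verify that the resulting integral does not grow with $n$. It is precisely the bound $K \ll \sqrt{n}$ of \eqref{ksqr} that makes $nt \gg 1$ and closes this; the Cauchy--Schwarz estimate on $A$ and the tube bound for $\nu$ are routine by comparison.
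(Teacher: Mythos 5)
Your proof is correct and follows essentially the same route as the paper's: both decompose $E_2$ according to distance to $\partial E_4$, treat the thin collar by Cauchy--Schwarz together with \eqref{area-form} and the perimeter bound \eqref{perimeter}, and treat the interior via the exponential decay \eqref{exp-decay}. The only differences are cosmetic — you cut at $CK/n$ rather than $K^{5/4}/n$ and integrate by parts in the distance variable instead of summing dyadic shells — which in fact yields the slightly stronger bound $X_1 \ll K^{3/4}$, consistent with the paper's remark that the exponent $7/8$ is not optimal.
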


The exponent $7/8$ can be improved, but the key point here is that the bound is of lower order than $O(K)$.

\begin{proof}
By \eqref{X1-def}, it suffices to show that
$$ \int_{E_2} |p'(z_0)|\ dA(z_0) \ll K^{7/8}.$$

From \eqref{perimeter}, the $K^{5/4}/n$-neighborhood of $\partial E_4$ has area $O(K^{7/4}/n)$.  From \eqref{area-form} and Cauchy--Schwarz, we conclude that the contribution to $X_1$ of $z_0$ with $\dist(z_0, \partial E_4) \leq K^{5/4}/n$ is acceptable.

For $m \geq 0$, the annular region $2^{m-1} K^{5/4}/n \leq \dist(z_0, \partial E_4) \leq 2^m K^{5/4}/n$ similarly has area $O(2^m K^{7/4}/n)$.  From \eqref{exp-decay} we have the pointwise bound
$$ |p'(z_0)| \ll \frac{e^{O(K)-c 2^{m-1} K^{5/4}}}{2^m K^{5/4}/n} $$
in this region.
The total contribution of all these regions to $X_1$ is then
$$ \ll \sum_{m=1}^\infty \frac{2^m K^{7/4}}{n} \frac{e^{O(K)-c 2^{m-1} K^{5/4}}}{2^m K^{5/4}/n} \ll K^{1/2} e^{O(K)-c K^{5/4}} \ll 1$$
which is also acceptable.
\end{proof}

\begin{lemma}[Control on $X_2$]\label{x2-lem}
If $\mu>0$ is sufficiently small, then
$$ X_2 \ll \mu \|p\|_1.$$
\end{lemma}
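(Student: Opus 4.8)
The plan is to localize the integral defining $X_2$ to a thin ``defect'' set. Write $X_2 = \int_F |\psi|\,dA$, where
$$ F \;:=\; E_2 \cap \Omega \cap \{|\psi| \le \lambda\} \;=\; \{\, z \in E_2 : |z| > \rho,\ |\psi(z)| \le \mu n/|z| \,\}. $$
First I would observe that the constraint $|\psi(z)| \le \mu n/|z|$ forces $z$ into a region where the partial fraction expansion \eqref{psi-expand} undergoes large cancellation. Using $\frac{1}{z-\zeta} = \frac1z + \frac{\zeta}{z(z-\zeta)}$ (legitimate since $|z| > \rho > 0$ on $F$) and summing over the critical points gives $\psi(z) = \frac{n-1}{z} + \frac1z\sum_\zeta \frac{\zeta}{z-\zeta}$, so by the triangle inequality, for $z \in F$,
$$ \sum_\zeta \frac{|\zeta|}{|z-\zeta|} \;\ge\; (n-1) - |z|\,|\psi(z)| \;\ge\; (n-1) - \mu n \;\ge\; \tfrac n2 $$
once $\mu$ is small (say $\mu \le \tfrac14$) and $n$ is large. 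Hence $F \subset E$, where $E := \{ z : \sum_\zeta |\zeta|/|z-\zeta| \ge n/2 \}$.

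Next I would bound $|E|$ via the rearrangement inequality \eqref{riesz-bound}: since the integrand is at least $n/2$ on $E$,
$$ \frac n2 |E| \;\le\; \int_E \sum_\zeta \frac{|\zeta|}{|z-\zeta|}\,dA \;=\; \sum_\zeta |\zeta| \int_E \frac{dA}{|z-\zeta|} \;\le\; 2\pi\,\r[E]\sum_\zeta|\zeta| \;=\; 2\pi\,\r[E]\,\|p\|_1, $$
and since $\r[E] = (|E|/\pi)^{1/2}$ this rearranges to $|E| \ll \|p\|_1^2/n^2$.

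Finally, on $F$ the bound $|z| > \rho$ gives $|\psi(z)| \le \mu n/\rho$, hence $X_2 \le \frac{\mu n}{\rho}|F| \le \frac{\mu n}{\rho}|E| \ll \frac{\mu \|p\|_1^2}{\rho n}$; and since \eqref{first} and \eqref{ksqr} give $\|p\|_1 \ll K \ll \sqrt n$, we have $\|p\|_1/(\rho n) = o(1)$, so $X_2 = o(\mu \|p\|_1)$, which is more than enough. (If one prefers not to invoke $\|p\|_1 = o(n)$, one can instead use $F \subset E$ to write $|\psi(z)| \le 2\mu \sum_\zeta \frac{|\zeta|}{|z||z-\zeta|}$ on $F$, integrate term by term, bound $\int_F \frac{dA}{|z||z-\zeta|} \le \rho^{-1}\int_{E_2}\frac{dA}{|z-\zeta|} \ll \rho^{-1}$ by \eqref{riesz-bound} and \eqref{e2-upper}, and obtain $X_2 \ll \mu\|p\|_1$ with an implied constant depending on $\rho$.)

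The crux is the first step: producing a gain genuinely proportional to $\mu$ rather than merely $O(\|p\|_1)$. The naive pointwise bound $|\psi| \le \sum_\zeta |z-\zeta|^{-1}$ together with \eqref{multip} already gives $X_2 \ll \|p\|_1$ (once $|F| \ll \|p\|_1^2/n^2$ is known), but it discards the factor $\mu$ entirely; the correct move is to use the smallness $|\psi(z)| \le \mu n/|z|$ twice — qualitatively, to trap $F$ inside the thin set $E$, and quantitatively, to estimate $|\psi|$ on that set. The only other wrinkle is the $1/|z|$ weight in $\lambda$, which blows up near the origin; this is exactly why the cutoff $\Omega = \C\setminus\overline{D(0,\rho)}$ is present, and it costs at most a harmless factor depending on $\rho$.
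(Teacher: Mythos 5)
Your first two steps are correct and take a genuinely different route from the paper: where the paper decomposes $E_2$ into dyadic annuli $\Ann(0,r_k,r_{k+1})$ with $r_k = 2^k C_0\|p\|_1/n$, lower bounds the contribution $\psi_1$ of the nearby critical points via the sector argument of Lemma \ref{lower-triangle}, and then invokes the distributional bound \eqref{distrib} for the far contribution $\psi_2$, you use the exact identity $z\psi(z) = (n-1) + \sum_\zeta \zeta/(z-\zeta)$ to show in one stroke that the constraint $|\psi|\leq \mu n/|z|$ traps $z$ in the superlevel set $E = \{\sum_\zeta |\zeta|/|z-\zeta| \geq n/2\}$, whose measure you correctly bound by $O(\|p\|_1^2/n^2)$ (so $\r[E] \ll \|p\|_1/n$) via \eqref{riesz-bound}. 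This is cleaner than the paper's argument and avoids Lemma \ref{lower-triangle} entirely.

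However, your last step has a genuine defect: both of your ways of converting $|F|\leq |E| \ll \|p\|_1^2/n^2$ into a bound on $X_2$ produce an implied constant depending on $\rho$ (your main line needs $\|p\|_1 \ll \rho n$, and your parenthetical alternative explicitly yields $\ll_\rho$). This is not the stated conclusion, and the dependence is fatal in the applications: the bound \eqref{lio}--\eqref{lio-2} is used with $\rho \to 0$ for fixed $n$, Proposition \ref{pots} takes $\rho \asymp \eps^{1/2}/n$, and the lemma is reused in Propositions \ref{inside} and \ref{inside-2} with $\Omega$ a (mollified) disk about the origin, so a loss of $1/\rho$ destroys the argument. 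The fix stays entirely within your framework: do not replace $\lambda(z) = \mu n/|z|$ by its worst-case value $\mu n/\rho$; instead bound $X_2 \leq \int_F \lambda\, dA \leq \mu n \int_E |z|^{-1}\, dA$ and apply \eqref{riesz-bound} once more, now with the single point $0$, to get $\int_E |z|^{-1}\, dA \leq 2\pi \r[E] \ll \|p\|_1/n$, hence $X_2 \ll \mu\|p\|_1$ with an absolute constant and no reference to $\rho$ or $\Omega$ at all. With that one-line repair your proof is complete and, in my view, preferable to the dyadic argument in the paper.
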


\begin{proof}  Let $C_0$ be a large constant (independent of $\mu$), and set $r_k \coloneqq 2^k C_0 \|p\|_1/n$ for $k=0,1,2,\dots$.  By \eqref{X2-def} and dyadic decomposition we have
\begin{equation}\label{x2-b}
     X_2 \ll \int_{D(0,r_0)} \frac{\mu n}{|z|}\ dA(z)+ \sum_{k \geq 1: r_k \ll 1} \frac{\mu n}{r_k} \left|\Ann(0,r_k,r_{k+1}) \cap \left\{|\psi| \leq \frac{\mu n}{r_k}\right\}\right|.
\end{equation}
The first term integrates to $O(\mu n r_0) = O(\mu C_0 \|p\|_1)$.  Now let $z$ be an element of the annulus
$\Ann(0,r_k,r_{k+1})$ with $|\psi(z)| \leq \frac{\mu n}{r_k}$, where $k \geq 1$ and $r_k \ll 1$. We use \eqref{psi-expand} to split $\psi = \psi_1 + \psi_2$, where
$$ \psi_1 \coloneqq \sum_{\zeta \in D(0, r_{k-1})} \frac{1}{z-\zeta}$$
and
$$ \psi_2 \coloneqq \sum_{\zeta \notin D(0, r_{k-1})} \frac{1}{z-\zeta}.$$
As $z$ lies in $\Ann(0,r_k,r_{k+1})$, we see from elementary geometry that the terms $\frac{1}{z-\zeta}$ contributing to $\psi_1$ all lie in a sector of aperture $2 \sin^{-1} \frac{r_{k-1}}{r_k} = \pi/3 < \pi$ and have magnitude $\asymp 1/r_k$, and there are $\gg n$ such terms by \eqref{markov-many}, \eqref{first} if $C$ is large enough.  From Lemma \ref{lower-triangle} we conclude that
$$ |\psi_1(z)| \gg \frac{n}{r_k}.$$
In particular, for $\mu$ small enough, the condition $|\psi(z)| \leq \mu n / r_k$ will force $|\psi_2(z)| \gg n/r_k$ from the triangle inequality.  If we let
$$ N_k \coloneqq | \{ \xi: \xi \notin D(0,r_{k-1}) \} |$$
be the number of critical points that contribute to $\psi_2$, then
from \eqref{distrib} we have
$$ \left|\Ann(0,r_k,r_{k+1}) \cap \left\{|\psi| \leq \frac{\mu n}{r_k}\right\}\right| \ll \frac{r_k^2}{n^2} N_k^2$$
and hence by \eqref{x2-b}
$$ X_2 \ll \mu C_0 \|p\|_1 + \mu \sum_{k=1}^\infty r_k N_k \frac{N_k}{n}.$$
By \eqref{first} we have
\begin{equation}\label{nksum}
    \sum_{k=1}^\infty r_k N_k \ll \|p\|_1
\end{equation}
and we trivially have $\frac{N_k}{n} \leq 1$.  The claim follows.
\end{proof}

\begin{lemma}[Control on $X_3$]\label{x3-lem}
    For any $r>0$, we have
$$ \int_{z \in D(0,r): |\psi(z)| \geq \mu n / |z|} \frac{|\psi'|}{|\psi|}\ dA
 \ll \frac{nr+\|p\|_1 \log n}{\mu n} .$$
In particular, taking $r=2$ (say), we have
$$ X_3 \ll \frac{n+\|p\|_1 \log n}{\mu n}.$$
\end{lemma}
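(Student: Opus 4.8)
The plan is to balance two pointwise estimates for $|\psi'|/|\psi|$ against each other. Differentiating the partial fraction expansion \eqref{psi-expand} (or using \eqref{log-psi-expand}) gives, on the one hand, the ``safe'' bound
$$ \frac{|\psi'(z)|}{|\psi(z)|} \le \sum_{p''(\xi)=0}\frac{1}{|z-\xi|} + \sum_\zeta \frac{1}{|z-\zeta|},$$
which holds for every $z$ and is locally integrable. On the other hand, on the region of integration we have $|\psi(z)| \ge \mu n/|z|$, hence $\tfrac{1}{|\psi(z)|}\le\tfrac{|z|}{\mu n}$, and combining this with $\psi'(z) = -\sum_\zeta (z-\zeta)^{-2}$ yields the ``gainful'' bound
$$ \frac{|\psi'(z)|}{|\psi(z)|} \le \frac{|z|}{\mu n}\sum_\zeta \frac{1}{|z-\zeta|^2},$$
which carries the desired factor $\tfrac{1}{\mu n}$ but is non-integrable at the critical points. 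I would fix a small cutoff scale $\delta>0$ (a sufficiently small negative power of $n$), set $V := \bigcup_\zeta D(\zeta,\delta)$, and apply the gainful bound on $D(0,r)\setminus V$ and the safe bound on $V$.

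For the gainful part, summing over $\zeta$ and using $|z|\le|z-\zeta|+|\zeta|$ to split $\tfrac{|z|}{|z-\zeta|^2}\le\tfrac{1}{|z-\zeta|}+\tfrac{|\zeta|}{|z-\zeta|^2}$, the Hardy--Littlewood rearrangement inequality \eqref{riesz-bound} gives $\int_{D(0,r)}\tfrac{dA}{|z-\zeta|}\le 2\pi r$ for each $\zeta$, contributing $O(nr)$ in total; while $\int_{\delta\le|z-\zeta|\le r+|\zeta|}\tfrac{dA}{|z-\zeta|^2}=2\pi\log\tfrac{r+|\zeta|}{\delta}$, which by \eqref{crit-bound} (so $|\zeta|\ll1$) and the choice of $\delta$ is $O(\log n)$ for $r$ in the relevant range, contributing $O\big(\sum_\zeta|\zeta|\log n\big)=O(\|p\|_1\log n)$. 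Dividing by $\mu n$ gives the claimed bound for this portion.

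For the part near the critical points I discard the constraint and simply integrate the safe bound over $V$. As there are at most $n-1$ distinct critical points, $|V|\le(n-1)\pi\delta^2$, so by \eqref{riesz-bound} each of the potentials $\int_V\tfrac{dA}{|z-\xi|}$ and $\int_V\tfrac{dA}{|z-\zeta|}$ is $\le 2\pi\sqrt{n-1}\,\delta$; summing over the $O(n)$ critical points of $p$ and $O(n)$ roots of $p''$ gives a total of $O(n^{3/2}\delta)$, which by the smallness of $\delta$ is negligible compared to the right-hand side. Taking $r=2$ then yields the displayed bound for $X_3$.

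I expect this last step to be the main obstacle. The critical points cluster strongly near the origin — by \eqref{markov-many} there are $\gg n$ of them within $O(\|p\|_1/n)$ of $0$ — so a point near $0$ lies inside many of the balls $D(\zeta,\delta)$ simultaneously, and estimating $\int\tfrac{dA}{|z-\zeta|}$ ball by ball would lose a spurious factor of $n$. The role of the rearrangement inequality \eqref{riesz-bound} is precisely that it controls the Riesz potential of the entire (small-area) cluster neighborhood $V$ uniformly, converting this would-be loss into the harmless $O(n^{3/2}\delta)$. A minor secondary point is keeping the logarithmic factor at $O(\log n)$, which only needs $\delta$ to be polynomially small in $n$ and $r$ not to be too extreme — both automatic in our applications (in particular for $r=2$).
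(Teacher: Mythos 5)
Your decomposition is the same as the paper's: excise small disks around the critical points, bound $|\psi'|/|\psi|$ there by $\sum_{p''(\xi)=0}|z-\xi|^{-1}+\sum_\zeta|z-\zeta|^{-1}$ via \eqref{log-psi-expand} and control the result with the rearrangement inequality \eqref{multip}/\eqref{riesz-bound}, and elsewhere use $|\psi|^{-1}\le|z|/(\mu n)$ together with $\psi'=-\sum_\zeta(z-\zeta)^{-2}$; your splitting $|z|/|z-\zeta|^2\le|z-\zeta|^{-1}+|\zeta|\,|z-\zeta|^{-2}$ reproduces the paper's ``direct computation'' yielding $O(r+|\zeta|\log n)$ per critical point, and both parts of the gainful estimate are correct.

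The one genuine defect is your choice of excision radius. You take $\delta$ to be a fixed negative power of $n$, independent of $r$, and your bound for the excised region is $O(n^{3/2}\delta)$ (or $O(n^2\min(\sqrt{n}\delta,r))$ if one restricts to $V\cap D(0,r)$). The right-hand side of the lemma can be as small as $\asymp r/\mu$ (e.g.\ when $\|p\|_1=0$), so for $r$ smaller than roughly $\mu n^{3/2}\delta$ your excised contribution is \emph{not} dominated by the claimed bound, and the statement ``for any $r>0$'' is not established. This is not a purely cosmetic quantifier: the lemma is invoked later (in the proof of Theorem \ref{main-thm}(iv)) with $r=r_-=\|p\|/C_0^2$, which a priori can be arbitrarily small, so the small-$r$ regime is genuinely needed. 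The fix is the paper's: let the excision radius scale with $r$ (the paper uses $rn^{-3}$), so the excised contribution becomes $O(r/n)\ll r/\mu$; one then checks, as the paper implicitly does, that the logarithm $\log\frac{r+|\zeta|}{rn^{-3}}$ is still $O(\log n)$ in the range where the $|\zeta|\log n$ term is actually incurred (namely $|\zeta|\ll r$; for $|\zeta|\gg r$ the second integral contributes only $O(r)$ with no logarithm). Your secondary observation about the clustering of critical points and the role of \eqref{riesz-bound} on the union $V$ is sound, though the paper in fact handles the excised disks one at a time using \eqref{multip} on each disk, which loses only a harmless extra factor relative to your union bound.
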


\begin{proof}
We first deal with a technical contribution near the critical points $\zeta$.  In a disk $D(\zeta,rn^{-3})$, we can use \eqref{multip} and \eqref{log-psi-expand} to conclude
$$ \int_{D(\zeta,rn^{-3})} \frac{|\psi'|}{|\psi|}\ dA \ll rn^{-3} \times n.$$
Thus, the contribution of $\bigcup_\zeta D(\zeta,rn^{-3})$ to $X_3$ is $O(n \times rn^{-3} \times n)$, which is acceptable.  The remaining contribution can now be bounded by
$$ \int_{D(0,r) \backslash \bigcup_\zeta D(\zeta,rn^{-3})} \frac{|\psi'|}{\mu n/|z|}\ dA(z).$$
Using \eqref{psi-expand} and the triangle inequality, we can bound this by
$$
\ll \frac{1}{\mu n} \sum_\zeta \int_{D(0,r) \backslash D(\zeta,rn^{-3})} \frac{|z|}{|z-\zeta|^2}\ dA(z).$$
Direct computation (splitting into the regions $|z| \geq 2|\zeta|$ and $|z| < 2 |\zeta|$) shows that the integral is bounded by $O( r + |\zeta| \log n)$.  The claim now follows from \eqref{semi-norm}.
\end{proof}

Combining these three lemmas and \eqref{x4}, \eqref{e2} with Theorem \ref{stokes}, we conclude that
$$ \ell(\partial E_1 \backslash D(0,\rho)) \leq 2n - K - \Psi(D(0,\rho)) + O\left( K^{7/8} + \mu \|p\|_1 + \frac{n+\|p\|_1 \log n}{\mu n} + 1\right).$$
Selecting $\mu \coloneqq K^{-1/2}$ and using \eqref{ksqr}, \eqref{first}, we simplify to
\begin{equation}\label{lio}
     \ell(\partial E_1 \backslash D(0,\rho)) \leq 2n - K - \Psi(D(0,\rho)) + O( K^{7/8})
\end{equation}
and thus
\begin{equation}\label{lio-2}
 \ell(\partial E_1 \backslash D(0,\rho)) \leq 2n - \Psi(D(0,\rho)) + O(1).
\end{equation}
Sending $\rho \to 0$, we obtain Theorem \ref{main-thm}(ii).

\section{An even sharper bound}\label{main-iii-sec}

Now we prove Theorem \ref{main-thm}(iii).  Let $\eps > 0$ be a small parameter. Assume that $n$ is sufficiently large depending on $\eps$, and that $p$ is a normalized maximizing polynomial of degree $n$.  We abbreviate $E_R(p)$ as $E_R$. It will suffice to show that
\begin{equation}\label{lupin-targ}
 \ell(\partial E_1) = 2n + 4 \log 2 + O(\eps^{1/2}).
\end{equation}

We can control the total size of $\|p\|$ defined in \eqref{size-def}, in the spirit of Heuristic \ref{main-heuristic}:

\begin{proposition}[Total size bound]\label{lemni}  We have
\begin{equation}\label{quam-0}
    \|p\|_1 \ll 1
\end{equation}
and
\begin{equation}\label{quam-1}
    \|p\|_0 \ll 1
\end{equation}
and hence by \eqref{size-def}
\begin{equation}\label{quam}
 \|p\| \ll 1.
\end{equation}
In particular, from \eqref{quam-0}, \eqref{markov} one has
\begin{equation}\label{quam-markov}
    \# \{ \zeta: \zeta \notin D(0,r) \} \ll \frac{1}{r}
\end{equation}
for all $r>0$.
\end{proposition}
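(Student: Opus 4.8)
The plan is to prove \eqref{quam-0} and \eqref{quam-1} separately, after which \eqref{quam} follows from \eqref{size-def} and \eqref{quam-markov} from \eqref{quam-0} and \eqref{markov}. For \eqref{quam-0}, Lemma~\ref{geomcontrol}(ii) reduces matters to showing $K \ll 1$, where $K$ is the quantity from \eqref{e2} and \eqref{ksqr}. The key point is that the error bounds already established in Section~\ref{main-ii-sec} (the three lemmas controlling $X_1$, $X_2$, $X_3$, together with \eqref{x4} and \eqref{x5}) are precisely what is needed to turn \eqref{lio} into the $\rho$-uniform inequality $\ell(\partial E_1 \backslash D(0,\rho)) \le 2n - K + O(K^{7/8})$, where we have simply discarded the non-negative term $\Psi(D(0,\rho))$. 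Since the single point $0$ carries no arclength, letting $\rho \to 0^+$ sends the left-hand side to $\ell(\partial E_1)$, so $\ell(\partial E_1) \le 2n - K + O(K^{7/8})$. But $p$ is a maximizer, so by \eqref{ep0} we have $\ell(\partial E_1) \ge \ell(\partial E_1(p_0)) = 2n + 4\log 2 + o(1) > 2n$ for $n$ large, and together with $K \ge 1$ this forces $K \ll 1$, hence $\|p\|_1 \ll K \ll 1$.

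For \eqref{quam-1}, Lemma~\ref{origin-repulsion} together with \eqref{quam-0} gives $\|p\|_0 \ll 1 + n\,\dist(0,\partial E_1)$, so it suffices to bound $d \coloneqq \dist(0,\partial E_1)$ by $O(1/n)$. If $0 \in \partial E_1$ then $d = 0$. If $0 \notin \overline{E_1}$, then the nearest point of $\overline{E_1}$ to the origin lies on $\partial E_1$, so every critical point $\zeta$ — which lies on $\partial E_1$ by Proposition~\ref{erem} — has $|\zeta| \ge d$; summing over the $n-1$ critical points gives $\|p\|_1 \ge (n-1)d$, whence $nd \ll \|p\|_1 \ll 1$. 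In the remaining case $0 \in E_1$ we have $D(0,d) \subseteq E_1 \subseteq E_2$ and $\partial E_1 \cap D(0,d) = \emptyset$; setting $\rho \coloneqq \tfrac12\min(d, 1/100)$, I apply Theorem~\ref{stokes} with $\Omega = \C \backslash \overline{D(0,\rho)}$ and $\lambda(z) = \mu n/|z|$ for a fixed, sufficiently small $\mu > 0$ (smoothed near the origin exactly as in Section~\ref{main-ii-sec}). Here $\partial E_1 \cap \overline{\Omega} = \partial E_1$ and $E_2 \cap \Omega = E_2 \backslash D(0,\rho)$, and using \eqref{quam-0}, the bound $K \ll 1$ just proved, and $\rho \le 1/200$ one checks that all the error terms are $O(1)$ ($X_1 \ll K^{7/8}$, $X_2 \ll \mu\|p\|_1$, $X_3 \ll 1/\mu$, $X_4 \ll 1$, $X_5 = 2\pi\rho$). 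Combining this with \eqref{2n} and the lower bound $\Psi(D(0,\rho)) \ge \Psi(\Ann(0,\rho/2,\rho)) \ge (n-1)\rho - O(\|p\|_1)$ of Lemma~\ref{ankh} yields $\ell(\partial E_1) \le \Psi(E_2) - \Psi(D(0,\rho)) + O(1) \le 2n - (n-1)\rho + O(1)$, and comparing with $\ell(\partial E_1) > 2n$ gives $(n-1)\rho \ll 1$. For $n$ large this forces $d < 1/100$ (hence $\min(d,1/100) = d$) and therefore $nd \ll 1$, completing the proof.

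The main obstacle is quantitative bookkeeping rather than any new idea: in both steps the argument closes only because the error terms $X_1,\dots,X_5$ of Theorem~\ref{stokes} are of strictly lower order than the available gain ($K$ in the first step, $n\,\dist(0,\partial E_1)$ in the second), and this is exactly where one must invoke the sharpened estimates on $X_1$, $X_2$, $X_3$ from Section~\ref{main-ii-sec} — themselves dependent on the a priori facts $K \ll \sqrt n$, $\|p\|_1 \ll K$ and, for the second step, $\|p\|_1 \ll 1$. A further delicate point in the second step is the choice of excised radius: it must be small enough that the perimeter term $X_5 = 2\pi\rho$ stays $O(1)$, yet the annulus $\Ann(0,\rho/2,\rho)$ must still carry $\Psi$-mass of order $n\,\dist(0,\partial E_1)$, which is what the $\min(d,1/100)$ truncation arranges. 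The accompanying case analysis — according to whether or not $\partial E_1$ surrounds the origin — is elementary, using only that $E_1$ is connected and contains all critical points of $p$.
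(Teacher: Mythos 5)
Your proposal is correct and follows essentially the same route as the paper: part \eqref{quam-0} comes from comparing the $\rho\to 0$ limit of \eqref{lio} with the maximality lower bound \eqref{lupin} to force $K\ll 1$ and then invoking \eqref{first}, and part \eqref{quam-1} comes from Lemma \ref{origin-repulsion} together with excising a disk of radius comparable to $\dist(0,\partial E_1)$ and playing the Stokes bound \eqref{lio-2} against the lower bound of Lemma \ref{ankh}. Your extra case analysis on the position of the origin (in particular the elementary treatment of $0\notin\overline{E_1}$ via the critical points lying on the lemniscate) is a harmless refinement of the paper's single contradiction argument, which simply assumes $\partial E_1$ avoids $\overline{D(0,C/n)}$ and derives the same contradiction.
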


\begin{proof}
 As before, we have \eqref{lupin}.  Comparing this against the limiting case $\rho \to 0$ of \eqref{lio}, we conclude that the quantity $K$ from the previous section obeys the bound $K=O(1)$.  Thus, by \eqref{first}, one has \eqref{quam-0}.

Let $r \coloneqq \frac{C}{n}$, where $C$ is a large constant.  To show \eqref{quam-1} or \eqref{quam}, it suffices by Lemma \ref{origin-repulsion}, \eqref{quam-0} to show that $\partial E_1$ intersects $\overline{D(0,r)}$.  Suppose for contradiction that the lemniscate avoids $\overline{D(0,r)}$.  From \eqref{lio-2} we then have
$$  \ell(\partial E_1) \leq 2n - \Psi(D(0,r)) + O(1).$$
But from Lemma \ref{ankh} and \eqref{quam-0} one has
$$\Psi(D(0,r)) \geq \Psi(\Ann(0,r/2,r)) \geq (n-1) r - O(1),$$
and thus
$$ |\ell(\partial E_1)| \leq 2n - (n-1) r + O( 1 + r),$$
which contradicts \eqref{lupin} for $C$ large enough.  The claim follows.
\end{proof}

From Proposition \ref{pform} we have
\begin{equation}\label{pform-alt}
 p(z) = -1 + \frac{zp'(z)}{n} \left( 1 + O_r\left(\frac{1}{n \delta(z)} \right) \right)
\end{equation}
whenever $|z| \geq r > 0$ and $\delta(z) \gg 1/n$.
As an initial application of this formula, we obtain

\begin{lemma}\label{initial-apps} Let
\begin{equation}\label{sigma-def}
    \sigma \coloneqq \frac{C_0 \log n}{n},
\end{equation}
for a large constant $C_0>0$.  Then
\begin{equation}\label{e1in}
  E_4 \subset D(0, 1 + \sigma).
\end{equation}
In particular, by the Gauss--Lucas theorem (or the fact that $\partial E_1$ contains the critical points) we have
\begin{equation}\label{crit-bound-2}
    |\zeta| \leq 1 + \sigma
\end{equation}
for all critical points $\zeta$.  Also, we have
\begin{equation}\label{pp-bound}
    |p'(z)| \ll n
\end{equation}
for all $z \in E_4$.
\end{lemma}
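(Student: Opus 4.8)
The plan is to establish the three assertions \eqref{e1in}, \eqref{crit-bound-2}, \eqref{pp-bound} in that order, since \eqref{crit-bound-2} is immediate from \eqref{e1in} and the proof of \eqref{pp-bound} will use both. For \eqref{e1in} I would start from Proposition~\ref{lemni}, which gives $\|p\|\ll 1$: repeating the area‑theorem estimate from the proof of Lemma~\ref{geomcontrol} with the improved input $K\ll 1$ yields $\sum_k k|a_k|^2/2^{2k/n}\ll 1/n$, hence $\sum_k |a_k|/4^{k/n}\ll\sqrt{\log n/n}$ by Cauchy--Schwarz, so by \eqref{inclusions} the preliminary inclusion $E_4\subset D(0,1+O(\sqrt{\log n/n}))$ holds, and by \eqref{quam-markov} only $O(1)$ critical points lie outside $D(0,1/2)$. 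I would then bootstrap using Proposition~\ref{pform}: for $z$ with $|z|\ge 1+\sigma$ lying at distance $\gg 1/n$ from every critical point (so $\delta(z)\gg 1/n$), write $|p'(z)|=n\prod_\zeta|z-\zeta|$, bound the $\gg n$ factors with $\zeta\in D(0,1/2)$ from below via the Weierstrass‑type product $\prod(1-\zeta/z)e^{\zeta/z}$ together with \eqref{zeta-mean} and $\sum_\zeta|\zeta|^2\le\|p\|_1^2\ll 1$ (from \eqref{l1-l2}), and bound the remaining $O(1)$ factors from below by $\gg n^{-O(1)}$; this gives $|p'(z)|\gg n^{1-O(1)}|z|^{n-1}\gg n^{1-O(1)}n^{C_0}$, so for $C_0$ chosen larger than the implied exponent \eqref{pform-alt} forces $|p(z)|>4$, i.e.\ $z\notin E_4$. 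Hence $E_4\cap\{|z|>1+\sigma\}$ is covered by the $O(1)$ disks $\overline{D(\zeta,1/n)}$ with $|\zeta|>1/2$; since $E_4$ is connected (Section~\ref{area-sec}) and contains $D(0,1-o(1))$ by \eqref{incl}, every connected component of the open set $E_4\cap\{|z|>1+\sigma\}$ must lie inside a single such disk (after grouping critical points within $O(1/n)$ of one another) and have closure meeting $\{|z|=1+\sigma\}$, which forces every point of that component to have modulus $\le 1+\sigma+O(1/n)$; enlarging the constant in $\sigma$ then gives \eqref{e1in}. Assertion \eqref{crit-bound-2} follows at once, since the critical points lie on $\partial E_1\subset\overline{E_1}\subset\overline{E_4}\subset\overline{D(0,1+\sigma)}$ (or by Gauss--Lucas, as they lie in the convex hull of the zeros of $p$, which lie in $E_1$).

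For \eqref{pp-bound} I would split $E_4$ into three regions. When $|z|\le 1/2$ we have $\overline{D(z,1/4)}\subset D(0,3/4)\subset E_4$, so $|p|<4$ on this disk and the Cauchy estimate gives $|p'(z)|\le 4\sup_{\partial D(z,1/4)}|p|\le 16\ll n$. When $|z|>1/2$ and $\dist(z,\partial E_4)\ge 1/n$, the exponential‑decay bound \eqref{exp-decay} (with $K\ll 1$) gives $|p'(z)|\ll e^{-cn\dist(z,\partial E_4)}/\dist(z,\partial E_4)$, which is $\ll n$ because $t\mapsto e^{-cnt}/t$ is decreasing on $(0,\infty)$ and equals $O(n)$ at $t=1/n$. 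For the remaining collar $\{z\in E_4:\ |z|>1/2,\ \dist(z,\partial E_4)<1/n\}$: if $\delta(z)\gg 1/n$, then rearranging \eqref{pform-alt} to $\frac{zp'(z)}{n}=\frac{p(z)+1}{1+O(1/(n\delta(z)))}$ and using $|p(z)+1|<5$, $|z|\ge 1/2$ gives $|p'(z)|\ll n$; the outstanding case is when $z$ lies within $O(1/n)$ of a critical point $\zeta$ with $|\zeta|>1/2$, where I would factor $|p'(z)|=n|z-\zeta|\prod_{\zeta'\ne\zeta}|z-\zeta'|$ and control the product via the Weierstrass factorization and \eqref{crit-bound-2}.

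The main obstacle is precisely this last case: a critical point of modulus as large as the bound $1+\sigma$ permits would make products $\prod_{\zeta'\ne\zeta}|z-\zeta'|\sim|\zeta|^{n}\sim n^{C_0}$ appear, which far exceeds $n$. What I expect is needed is to show that the $O(1)$ critical points of modulus $>1/2$ in fact cluster within $O(1/n)$ of the unit circle — this can be extracted from a ``circle of radius $\gg 1/n$ around $\zeta$'' version of the argument used for \eqref{e1in}, applying Proposition~\ref{pform} in the directions of increasing modulus and combining $|p(\zeta)|=1$ (critical points lie on $\partial E_1$) with the connectedness and perimeter control of $E_4$ from Section~\ref{area-sec} to rule out larger moduli. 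Once $|\zeta|=1+O(1/n)$ is known, the Weierstrass bound gives $|p''(\zeta)|\ll n$, so that $\partial E_4$ stays at distance $\gg 1/n$ from $\zeta$; the outstanding case is then vacuous (the collar is $\gg 1/n$ from every critical point, so $\delta(z)\gg 1/n$ there), and \eqref{pform-alt} closes the estimate. Localizing the exterior critical points in this way is the step I expect to require the most care.
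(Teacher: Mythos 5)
Your treatment of \eqref{e1in} and \eqref{crit-bound-2} is essentially the paper's argument: a preliminary inclusion from the area theorem, a bootstrap via the lower bound \eqref{p'-bound-lower} against the upper bound forced by Proposition \ref{pform} and $|p|\leq 4$ to push points with $\delta(z)\gg 1/n$ inside $D(0,1+O(\log n/n))$, and then connectedness of $E_4$ to absorb the $O(1)$ exceptional disks of radius $O(1/n)$ around critical points. Your handling of \eqref{pp-bound} away from the difficult set (Cauchy near the origin, \eqref{exp-decay} when $\dist(z,\partial E_4)\geq 1/n$, and \eqref{pform-alt} when $\delta(z)\gg 1/n$) is also sound and close in spirit to the paper's decomposition.

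The genuine gap is the case you yourself flag: $z\in E_4$ within $O(1/n)$ of a critical point $\zeta$ that is itself close to $\partial E_4$. Your proposed resolution rests on two claims that are not established and are doubtful as stated. First, the localization ``critical points of modulus $>1/2$ lie within $O(1/n)$ of the unit circle'' is not proved anywhere at this stage of the paper (indeed a critical point at modulus $0.9$, say, is not excluded by anything known so far), and your sketch of it invokes Proposition \ref{pform} ``in the directions of increasing modulus,'' which cannot be applied near $\zeta$ since that proposition requires $\delta\gg 1/n$. Second, the deduction ``$|p''(\zeta)|\ll n$, so $\partial E_4$ stays at distance $\gg 1/n$ from $\zeta$'' needs control of $p''$ (equivalently of $p'$) on a whole neighborhood of $\zeta$, not just at the single point $\zeta$ --- and bounding $p'$ on that neighborhood is precisely what is being proved, so as written this is circular. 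The paper's proof closes this case with a short comparison argument that avoids ever evaluating $\prod_{\zeta'\neq\zeta}|z-\zeta'|$ in absolute terms (which is what produces your spurious $n^{C_0}$): by connectedness of $E_4$ one finds $z'\in E_4$ with $|z-z'|=O(C/n)$ and $\delta(z')=C/n$ exactly, so that $|p'(z')|\ll n$ is already known from the previously treated cases, and then
\begin{equation*}
\left|\frac{p'(z)}{p'(z')}\right|=\prod_\zeta \frac{|z-\zeta|}{|z'-\zeta|}=O_C(1),
\end{equation*}
since each factor is $O(1)$ (as $|z-z'|\ll C/n \ll |z'-\zeta|$) and the factors with $\zeta\in D(0,1/2)$ are $1+O(C/n)$, whose product over at most $n$ terms is $e^{O(C)}$. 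I would recommend replacing your clustering-plus-vacuity strategy with this ratio argument; as it stands, the hardest step of \eqref{pp-bound} is not actually carried out in your proposal.
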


\begin{proof}  From \eqref{incl} we already have
$$  E_4 \subset D(0, 1 + \eps).$$
Suppose that $z \in E_4 \backslash D(0,1)$ and $\delta(z) \geq C/n$ for a large constant $C$.  Then $|p(z)| \leq 4$ and $|z| \geq 1$, and by \eqref{pform-alt} we conclude that $|p'(z)| \ll n$ (if $C$ is large enough).  On the other hand, from \eqref{p'-bound-lower} we have $|p'(z)| \gg n |z|^{n-1} (C/n)^{O(1)}$.  Comparing the bounds we conclude that
$z \in D\left(0, 1 + O_C\left(\frac{\log n}{n} \right)\right)$.  We conclude from \eqref{quam-markov}, \eqref{delta-def} that $E_4$ is contained in the union of $D\left(0, 1 + O_C\left(\frac{\log n}{n} \right)\right)$ and $O(1)$ disks of radius $C/n$.  Since $E_4$ is connected, the claim \eqref{e1in} follows from the triangle inequality (if $C_0$ is large enough).

To prove \eqref{pp-bound}, we have already treated the cases where $z \in E_4 \backslash D(0,1)$ and $\delta(z) \geq C/n$, and from Proposition \ref{pvar} we also handle the case when $z \in D(0,1)$.  The remaining case is when $z \in E_4 \backslash D(0,1)$ and $\delta(z) < C/n$.  As discussed before, the restriction $\delta(z) < C/n$ localizes $z$ to $O(1)$ disks of radius $C/n$.  By the connectedness of $E_4$ (and \eqref{incl}) we thus see that $z$ lies within $O(C/n)$ of a point $z' \in E_4$ with $\delta(z') = C/n$, so that $p'(z') \ll n$ by the previous analysis.  From \eqref{pp-factor} we have
$$ \left|\frac{p'(z)}{p'(z')}\right| = \prod_\zeta \frac{|z-\zeta|}{|z'-\zeta|}.$$
Since $\delta(z') = C/n$ and $|z-z'| \ll C/n$, we see that each factor here is $O(1)$, and for $\zeta \in D(0,1/2)$ the factor is $1 + O(|\zeta| n / C) = \exp(O(|\zeta|))$.  Multiplying using \eqref{quam-0}, we conclude \eqref{pp-bound} in this case also.
\end{proof}

Let $0 < \rho < \eps$ be a small radius. From the above lemma we may split
\begin{equation}\label{split}
 \ell(\partial E_1 \backslash D(0,\rho)) = \ell(\partial E_1 \cap \Ann(0,\rho,\eps)) + \ell(\partial E_1 \cap \Ann(0,\eps,1-\sigma)) + \ell(\partial E_1 \cap \Ann(0,1-\sigma,1+\sigma)).
\end{equation}
We now estimate the three components separately.  We begin with the innermost component:

\begin{proposition}[Inner bound]\label{inside}  We have
    $$ \ell(\partial E_1 \cap \Ann(0,\rho,\eps)) \leq 2\eps n - \Psi(D(0,\rho)) + O(\eps^{1/2}).$$
\end{proposition}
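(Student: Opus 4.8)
The plan is to apply Theorem~\ref{stokes} to the annular domain $\Omega \coloneqq \{z \in \C: \rho < |z| < \eps\}$ (a semi-algebraic open set) with the weight $\lambda(z) \coloneqq \mu n/|z|$, where $\mu \coloneqq \eps^{1/2}$; this is smooth and positive on a neighbourhood of $\overline{\Omega}$ (which avoids the origin) and may be extended smoothly and positively to all of $\C$, the extension being harmless since every integrand in Theorem~\ref{stokes} is supported in $E_2 \cap \Omega \subset \Omega$. Since $\overline{\Omega} \supset \Ann(0,\rho,\eps)$, the left side of \eqref{stokes-eq} bounds $\ell(\partial E_1 \cap \Ann(0,\rho,\eps))$ from above, so it suffices to estimate the main term $\Psi(E_2 \cap \Omega)$ and the five error terms.

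For the main term one needs no inclusion of the form $\Ann(0,\rho,\eps) \subset E_2$: since $\Psi$ is a nonnegative absolutely continuous measure one has $\Psi(E_2 \cap \Omega) \le \Psi(\Omega) = \Psi(D(0,\eps)) - \Psi(D(0,\rho))$, and then \eqref{triangle} together with the Riesz estimate \eqref{multip} gives
\[ \Psi(D(0,\eps)) = \frac{1}{\pi}\int_{D(0,\eps)} |\psi|\,dA \le \frac{1}{\pi}\int_{D(0,\eps)} \sum_\zeta \frac{1}{|z-\zeta|}\,dA \le 2(n-1)\eps \le 2\eps n, \]
which already produces the desired main term $2\eps n - \Psi(D(0,\rho))$.

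For the error terms I would argue as follows. We have $X_5 = \ell(\partial\Omega) = 2\pi(\rho+\eps) \ll \eps$ since $\rho < \eps$, and since $|\lambda'|/|\lambda| \asymp 1/|z|$ a direct computation gives $X_4 \ll \int_{\Ann(0,\rho,\eps)} |z|^{-1}\,dA = 2\pi(\eps-\rho) \ll \eps$. Lemma~\ref{x2-lem} (whose domain $\C\setminus\overline{D(0,\rho)}$ contains $\Omega$, so restriction only shrinks the integral) together with the total-size bound \eqref{quam-0} gives $X_2 \ll \mu\|p\|_1 \ll \mu = \eps^{1/2}$, provided $\eps$ is small enough that $\mu$ lies below the absolute smallness threshold of that lemma. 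Lemma~\ref{x3-lem}, applied with $r = \eps$ and with $\mu/2$ in place of $\mu$ (to match the cutoff $|\psi| \ge \lambda/2$ in $X_3$), gives $X_3 \ll (n\eps + \|p\|_1\log n)/(\mu n) \ll \eps^{1/2} + (\log n)/(\eps^{1/2} n)$, and the last term is $o(1)$ once $n$ is large in terms of $\eps$.

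The remaining term $X_1 = \int_{E_2\cap\Omega} |p'|\,dA$ is where the only real work lies, and I expect it to be the main (though modest) obstacle: one must show that $|p'|$ is negligible throughout the small disc $D(0,\eps)$. I would split at radius $\asymp \|p\|_1/n$. On $D(0,O(\|p\|_1/n))$ the bound \eqref{ppcl} gives $|p'(z)| \ll e^{O(n)} n^{-n}\|p\|_1^{n-1}$, while on $\{\|p\|_1/n \ll |z| \le \eps\}$ the factorization estimate \eqref{p'-bound} gives $|p'(z)| \le n|z|^{n-1} e^{O(\min(\|p\|_1/|z|,\,\|p\|_1^2/|z|^2))}$. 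The point is that once $|z| \gtrsim \|p\|_1/n$ the polynomial decay $|z|^{n-1} \le \eps^{n-1}$ beats the exponential factor (which is at most $e^{O(n)}$ since $\|p\|_1/|z| \ll n$); using $\|p\|_1 \ll 1$ from \eqref{quam-0}, one checks that $|p'(z)| \ll n\eps^{n-1} e^{O(1/\eps)}$ uniformly on $D(0,\eps)$, which for fixed $\eps < 1$ is $o(1)$ (indeed exponentially small in $n$). Hence $X_1 \ll o(1)\cdot|D(0,\eps)| = o(1)$. Plugging these estimates into \eqref{stokes-eq} gives $\ell(\partial E_1 \cap \Ann(0,\rho,\eps)) \le 2\eps n - \Psi(D(0,\rho)) + O(\eps^{1/2})$ once $n$ is large enough in terms of $\eps$ (so that the various $o(1)$ contributions are $\le \eps^{1/2}$), as required.
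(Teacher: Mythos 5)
Your proof is correct and follows essentially the same route as the paper: apply Theorem \ref{stokes} on the annulus with $\lambda = \mu n/|z|$, $\mu=\eps^{1/2}$, bound the main term via \eqref{triangle} and \eqref{multip}, control $X_2,X_3$ via Lemmas \ref{x2-lem}, \ref{x3-lem} with \eqref{quam-0}, and bound $X_1$ by the pointwise smallness of $|p'|$ on $D(0,\eps)$ from Proposition \ref{pvar}. Your handling of the origin (working directly on the open annulus rather than mollifying $\lambda$) and your explicit remarks about the $\lambda/2$ cutoff and the smallness threshold for $\mu$ are minor tidy-ups of points the paper treats implicitly.
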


\begin{proof}  We apply Theorem \ref{stokes} with $\Omega = D(0,\eps)$ and $\lambda(z) = \mu n / |z|$ for a small parameter $\mu > 0$ to be chosen later (mollifying $\lambda$ at the origin as before).  We conclude (using \eqref{incl}) that
$$ \ell(\partial E_1 \cap \Ann(0,\rho,\eps)) \leq \Psi(D(0,\eps)) - \Psi(D(0,\rho)) + O( X_1 + X_2 + X_3 + X_4 + X_5)$$
where $X_1,\dots,X_5$ are defined as in \eqref{X1-def}--\eqref{X5-def}.  Direct calculation gives
$$ X_4, X_5 \ll \eps $$
while from Proposition \ref{pvar} we have
$$ X_1 \ll n\eps^{n-1} e^{O(1/\eps)} \times \pi \eps^2.$$
From Lemma \ref{x2-lem} and \eqref{quam-0} we have
$$ X_2 \ll \mu.$$
while from Lemma \ref{x3-lem} and \eqref{quam-0} one has
$$ X_3 \ll \frac{n\eps + \log n}{\mu n}.$$
Finally, we apply \eqref{triangle}, \eqref{multip} to obtain
$$ \Psi(D(0,\eps)) \leq 2 (n-1) \eps.$$
Setting $\mu = \eps^{1/2}$ and combining all the above bounds, we obtain the claim after a brief calculation.
\end{proof}

To treat the contributions near critical points, we first need

\begin{lemma}[Local upper bound on lemniscate length]\label{sting}  If $z_0 \neq 0$ and $0 < r < |z_0|/2$, then
$$ \ell(\partial E_1 \cap D(z_0, r)) \ll \frac{nr^2}{|z_0|} + r + \frac{r\|p\|_1}{|z_0|} + \frac{\|p\|_1 \log n}{n}.$$
In particular, if $|z_0| \gg \|p\|_1$ and $r \gg \frac{\log^{1/2} n}{n} |z_0|$, then we have the simplified bound
$$ \ell(\partial E_1 \cap D(z_0, r)) \ll \frac{nr^2}{|z_0|}.$$
\end{lemma}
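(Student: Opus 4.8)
The plan is to invoke Theorem~\ref{stokes} with $\Omega = D(z_0,r)$ and the \emph{constant} weight $\lambda \equiv \lambda_0 \coloneqq n/|z_0|$, and to show that the main term $\Psi(E_2 \cap D(z_0,r))$ together with each error term $X_1,\dots,X_5$ is bounded by (a piece of) the claimed right-hand side. Since $\partial E_1 \subseteq E_2$, we may assume $E_2 \cap D(z_0,r) \neq \emptyset$, as otherwise $\ell(\partial E_1 \cap D(z_0,r)) = 0$ and there is nothing to prove; then the inclusion $E_4 \subseteq D(0,1+\sigma)$ from \eqref{e1in} together with $r < |z_0|/2$ forces $|z_0| \le 1+\sigma+r \le 2(1+\sigma)$, so in particular $|z_0| \ll 1$. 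With the above choice we immediately obtain $X_4 = 0$ (as $\lambda' = 0$) and $X_5 = \ell(\partial D(z_0,r)) = 2\pi r$.

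For the main term and the remaining error terms, the recurring device is to split the critical points according to whether $\zeta \in D(0,|z_0|/4)$ or not; since $r < |z_0|/2$, in the former case $|z-\zeta| \gg |z_0|$ uniformly for $z \in D(z_0,r)$, and in the latter case \eqref{markov} shows there are at most $O(\|p\|_1/|z_0|)$ such $\zeta$. Applying \eqref{triangle} and this dichotomy to $\Psi(E_2 \cap D(z_0,r)) \le \tfrac1\pi \int_{D(z_0,r)} \sum_\zeta |z-\zeta|^{-1}\, dA$ --- bounding $|z-\zeta|^{-1} \ll |z_0|^{-1}$ in the first case and using the Riesz bound \eqref{riesz-bound} in the second --- gives $\Psi(E_2 \cap D(z_0,r)) \ll \tfrac{nr^2}{|z_0|} + \tfrac{r\|p\|_1}{|z_0|}$. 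The term $X_2$ is at most $\lambda_0 \, |D(z_0,r)| \ll \tfrac{nr^2}{|z_0|}$ directly, and $X_1 \le (\sup_{E_4}|p'|)\,|D(z_0,r)| \ll n r^2 \ll \tfrac{nr^2}{|z_0|}$ using the pointwise bound $|p'| \ll n$ on $E_4$ from \eqref{pp-bound} together with $|z_0| \ll 1$.

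The term $X_3$ is handled exactly as in the proof of Lemma~\ref{x3-lem}: we first remove the union of the disks $D(\zeta, rn^{-3})$ over all $n-1$ critical points, which by \eqref{log-psi-expand} and \eqref{multip} contribute only $O(n^2 \cdot rn^{-3}) = O(r/n)$ in total; on the complement the integrand is supported where $|\psi| \ge \lambda_0/2$, so $\tfrac{|\psi'|}{|\psi|} \le \tfrac{2|z_0|}{n}|\psi'| \le \tfrac{2|z_0|}{n}\sum_\zeta |z-\zeta|^{-2}$, and splitting the critical points as before --- bounding $\int_{D(z_0,r)}|z-\zeta|^{-2}\,dA \ll r^2/|z_0|^2$ when $\zeta \in D(0,|z_0|/4)$, and $\int_{D(z_0,r)\setminus D(\zeta, rn^{-3})}|z-\zeta|^{-2}\,dA \ll \log n$ for the $O(\|p\|_1/|z_0|)$ remaining $\zeta$ --- yields $X_3 \ll r + \tfrac{\|p\|_1\log n}{n}$. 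Collecting these bounds in \eqref{stokes-eq} proves the first displayed estimate, and the ``in particular'' statement follows by noting that under the stated hypotheses (and $\|p\|_1 \ll 1$ from Proposition~\ref{lemni}) one has $nr \gg |z_0| \gg \|p\|_1$ and $n^2 r^2 \gg |z_0|^2\log n \gg \|p\|_1 |z_0|\log n$, so that each of $r$, $\tfrac{r\|p\|_1}{|z_0|}$, and $\tfrac{\|p\|_1\log n}{n}$ is dominated by $\tfrac{nr^2}{|z_0|}$.

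The main obstacle is the estimate for $X_1$: the crude Cauchy--Schwarz bound $X_1 \ll r\sqrt{n}$ coming from \eqref{area-form} is not always admissible against $\tfrac{nr^2}{|z_0|}$, and one genuinely needs the pointwise bound $|p'| \ll n$ on $E_4$ from Lemma~\ref{initial-apps} --- hence the small-$\|p\|$ regime of Proposition~\ref{lemni} --- together with the observation that $D(z_0,r)$ meeting $E_2$ forces $|z_0| \ll 1$. The bookkeeping for $X_3$ near the critical points is the only other technical point, and it is a direct transcription of the argument in Lemma~\ref{x3-lem}.
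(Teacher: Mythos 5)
Your proposal is correct and follows essentially the same route as the paper: the same application of Theorem \ref{stokes} with $\Omega = D(z_0,r)$ and the constant weight $\lambda = n/|z_0|$, the same dichotomy on whether $\zeta \in D(0,|z_0|/4)$ (using \eqref{markov} to count the exceptional critical points) for both the main term $\Psi$ and for $X_3$, the same excision of small disks around the critical points for $X_3$, and the same use of the pointwise bound $|p'| \ll n$ on $E_4$ for $X_1$. The only differences are cosmetic (radius $rn^{-3}$ versus $rn^{-2}$ for the excised disks, and justifying $|z_0| \ll 1$ via \eqref{e1in} rather than \eqref{incl}).
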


\begin{proof}  From \eqref{incl} we may assume that $z_0 = O(1)$, otherwise the set $\partial E_1 \cap D(z_0,r)$ is empty.

    We apply Theorem \ref{stokes} with $\Omega = D(z_0,r)$ and $\lambda = n/|z_0|$.  We conclude that
$$ \ell(\partial E_1 \cap D(z_0,r)) \ll \Psi(D(z_0,r)) + O( X_1 + X_2 + X_3 + X_4 + X_5)$$
where $X_1,\dots,X_5$ are defined as in \eqref{X1-def}--\eqref{X5-def}.  Direct calculation gives
$$ X_2 \ll \frac{n r^2}{|z_0|}; \quad X_4 = 0; \quad X_5 \ll r$$
From \eqref{X1-def} and \eqref{pp-bound} we have
$$ X_1 \ll n r^2 \ll \frac{nr^2}{|z_0|}.$$
For $X_3$, we first control a minor contribution when $z \in \bigcup_\zeta D(\zeta, n^{-2} r)$.  From \eqref{multip}, \eqref{log-psi-expand} this contribution is bounded by $O(n \times n \times n^{-2} r) = O(r)$.  The remaining contribution to $X_3$ is bounded by
$$ \frac{|z_0|}{n} \int_{D(z_0,r) \backslash \bigcup_\zeta D(\zeta,n^{-2} r)} |\psi'|\ dA(z).$$
By \eqref{psi-expand} and the triangle inequality, this is bounded by
$$ \frac{|z_0|}{n} \sum_\zeta \int_{D(z_0,r) \backslash D(\zeta,n^{-2} r)} \frac{1}{|z-\zeta|^2}\ dA(z)$$
Each integral is $O(\log n)$, but with the improved bound of $O( r^2/|z_0|^2 )$ if $\zeta \in D(0,|z_0|/4)$.  By \eqref{markov}, this bound holds for all but $O(\|p\|_1/|z_0|)$ critical points, hence
$$ X_3 \ll r + \frac{|z_0|}{n} n \frac{r^2}{|z_0|^2} + \frac{|z_0|}{n} \frac{\|p\|_1}{|z_0|} \log n.$$
Finally, from \eqref{Psi-def}, \eqref{triangle} we have
\begin{equation}\label{tri-use}
  \Psi(D(z_0,r)) \leq \sum_\zeta \frac{1}{\pi} \int_{D(z_0,r)} \frac{1}{|z-\zeta|}\ dA(z).
\end{equation}
The integral here is $O(r)$ for all $\zeta$ (by \eqref{riesz-bound}), but the bound can be improved to $r^2 / |z_0|$ if $\zeta \in D(0, |z_0|/4)$.  By \eqref{markov}, this condition $\zeta \in D(0,|z_0|/4)$ holds for  all but $O(\|p\|_1/|z_0|)$ critical points, thus
$$ \Psi(D(z_0,r)) \ll n \frac{r^2}{|z_0|} + \frac{\|p\|_1}{|z_0|} r.$$
Combining all these estimates, we obtain the claim.
\end{proof}

Now we handle the intermediate region:

\begin{proposition}[Bound in intermediate region]\label{annulus}  We have
    $$ \ell(\partial E_1 \cap \Ann(0,\eps,1-\sigma)) \leq 2n (1-\eps-\sigma) + O_\eps\left(\frac{\log n}{n} \right).$$
\end{proposition}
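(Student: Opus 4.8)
The plan is to apply the second arclength formula, Lemma~\ref{arclength}(ii), on the annulus with small disks around the interior critical points excised; to use Proposition~\ref{pform} (in the form \eqref{pform-alt}) to show the integrand $\big|\sin\arg\tfrac{p}{zp'}\big|^{-1}$ is essentially $1$ there; to bound the number of crossings of each circle $\partial D(0,r)$ by $2n$ via Bezout's theorem (cf.\ \eqref{tilde-r}); and to dispose of the excised neighbourhoods and the residual error using Lemma~\ref{sting}.

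First I would set up the excision. By \eqref{quam-markov} the set $\mathcal{Z}\coloneqq\{\zeta:|\zeta|\ge\eps/4\}$ has size $O(1/\eps)$, and I take $\Omega\coloneqq\Ann(0,\eps,1-\sigma)\setminus\bigcup_{\zeta\in\mathcal Z}\overline{D(\zeta,r_*)}$ with $r_*\coloneqq C_1/n$ for a constant $C_1$ to be taken large depending on $\eps$. This $\Omega$ is semialgebraic, avoids the origin, has closure avoiding every critical point (those with $|\zeta|<\eps/4$ lie at distance $\ge 3\eps/4$ from $\overline{\Ann(0,\eps,1-\sigma)}$, the others have a closed disk removed), and satisfies $\delta(z)\ge C_1/n$ on it, hence $\delta(z)\gg 1/n$. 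Now for $z\in\partial E_1\cap\Omega$ we have $|z|\le 1-\sigma$, so $|z|^n\le n^{-C_0}$ and \eqref{pin-2} (using $\|p\|\ll1$ from Proposition~\ref{lemni}) gives $p(z)=-1+O_\eps(n^{-C_0})$; writing $p(z)=e^{i\alpha}$ this means $\alpha=\pi+O_\eps(n^{-C_0})$. Meanwhile \eqref{pform-alt} gives $zp'(z)=n(1+p(z))\big(1+O_\eps(\tfrac1{n\delta(z)})\big)$, so, exactly as in the computation in the proof of Lemma~\ref{out},
$$\arg\frac{p(z)}{zp'(z)}=\frac\alpha2+O_\eps\!\left(\frac1{n\delta(z)}\right)\pmod\pi,$$
which lies within $O(1/C_1)$ of $\tfrac\pi2\bmod\pi$; choosing $C_1$ large (in $\eps$) secures the transversality \eqref{radial}, and \eqref{cosec-asym} then gives $\big|\sin\arg\tfrac{p(z)}{zp'(z)}\big|^{-1}=1+O_\eps\!\big(\tfrac1{(n\delta(z))^2}+n^{-2C_0}\big)$. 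Inserting this into \eqref{arcl-2} and using $\#(\partial E_1\cap\partial D(0,r))\le 2n$ (cf.\ \eqref{tilde-r}), the main term is bounded by $\int_\eps^{1-\sigma}2n\,dr=2n(1-\eps-\sigma)$, and the $n^{-2C_0}$ error contributes $O_\eps(1/n)$.

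It remains to absorb the $O_\eps((n\delta)^{-2})$ error on $\partial E_1\cap\Omega$ and the lemniscate length inside $\Ann(0,\eps,1-\sigma)\setminus\Omega$. For the length inside the excised disks, subadditivity of length together with Lemma~\ref{sting} (with $|\zeta|\ge\eps/4$, $\|p\|_1\ll1$, $r=r_*$) and $\#\mathcal Z\ll1/\eps$ gives $\ell(\partial E_1\cap(\Ann(0,\eps,1-\sigma)\setminus\Omega))\le\sum_{\zeta\in\mathcal Z}\ell(\partial E_1\cap D(\zeta,r_*))\ll_\eps\log n/n$. For the error on $\partial E_1\cap\Omega$, I would decompose by $d(z)\coloneqq\min_\zeta|z-\zeta|$: where $d(z)\ge\eps/8$ one has $\delta(z)\gg\eps$, so $(n\delta(z))^{-2}\ll_\eps n^{-2}$ and this part contributes $O_\eps(1/n)$; where $2^jr_*\le d(z)<2^{j+1}r_*<\eps/8$ (call this $R_j$) one has $\delta(z)\gg 2^jr_*$, so $(n\delta(z))^{-2}\ll 4^{-j}C_1^{-2}$, while $R_j\subset\bigcup_{\zeta\in\mathcal Z}D(\zeta,2^{j+1}r_*)$ gives, since $dr\le ds$ along the lemniscate and by Lemma~\ref{sting}, $\ell(\partial E_1\cap R_j)\le\sum_{\zeta\in\mathcal Z}\ell(\partial E_1\cap D(\zeta,2^{j+1}r_*))\ll_\eps 4^j/n+\log n/n$; thus the $R_j$-contribution to the error is $O_\eps(1/n+\log n/(4^jn))$, and summing over the $O(\log n)$ relevant $j$ yields $O_\eps(\log n/n)$. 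Adding the three pieces gives the claimed bound.

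The main obstacle is exactly this bookkeeping near the critical points: one must excise neighbourhoods large enough for Proposition~\ref{pform} to control $\arg\tfrac{p}{zp'}$ precisely enough to legitimately run \eqref{arcl-2}, yet small enough that the lemniscate length sacrificed there (estimated via Lemma~\ref{sting}) and the residual $(n\delta)^{-2}$ deviation of the integrand from $1$ both remain below the target $O_\eps(\log n/n)$. The dyadic decomposition by distance to the nearest critical point is what reconciles these competing demands, and arranging the argument so that the $\eps$-dependence of all constants is visible (so that $C_1$ may be chosen last) is the technical heart.
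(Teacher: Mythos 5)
Your proposal is correct and follows essentially the same route as the paper: excise small disks around the $O_\eps(1)$ critical points meeting the annulus (controlled by Lemma \ref{sting}), apply the second arclength formula with Proposition \ref{pform} supplying transversality and the $1+O_\eps((n\delta)^{-2})$ cosecant bound, cap the crossing count at $2n$ via Section \ref{conj-sec}, and absorb the residual error by a dyadic decomposition in the distance to the nearest critical point together with Lemma \ref{sting}. The only differences are cosmetic — you excise at radius $C_1/n$ rather than the paper's $\log^{1/2}n/n$, which forces you to carry the full (rather than simplified) bound of Lemma \ref{sting} at the smallest dyadic scales, and you organize the dyadic sum by shells of $\min_\zeta|z-\zeta|$ rather than per critical point — but both bookkeeping schemes land on the same $O_\eps(\log n/n)$ error.
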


\begin{proof}
By \eqref{quam-markov}, there are $O(1/\eps)$ critical points $\zeta$ for which $D(\zeta, 2\frac{\log^{1/2} n}{n})$ intersects $\Ann(0,\eps,1-\sigma)$; by the triangle inequality, each of these critical points obeys $|\zeta| \asymp_\eps 1$.  By Lemma \ref{sting} and Proposition \ref{lemni}, the combined contribution of these disks $D(\zeta, 2\frac{\log^{1/2} n}{n})$ to the lemniscate length is $O_\eps(\log n/n)$.
It will thus suffice to establish the bound
$$ \ell(\partial E_1 \cap \Omega) \leq 2n (1-\eps-\sigma) + O_\eps\left(\frac{\log n}{n} \right)$$
where
$$ \Omega \coloneqq \left\{ z \in \Ann(0,\eps,1-\sigma): \delta(z) \geq \frac{\log^{1/2} n}{n} \right\}.$$
We now plan to apply the second arclength formula \eqref{arcl-2} from Lemma \ref{arclength}.
From Proposition \ref{pform}, $p(z) \neq -1$, and
$$ \arg \frac{p(z)}{zp'(z)} = \arg \frac{p(z)}{p(z)+1}
+ O_\eps\left(\frac{1}{n \delta(z)} \right) \pmod \pi.$$
Since $|p(z)|=1$ and $p(z) = -1 + O((1-\sigma)^n) = -1 + O(1/n)$, we have
$$ \arg(p(z)) = \pi + O\left(\frac{1}{n}\right) \pmod{2\pi}; \quad \arg(p(z)+1) = \pm \frac{\pi}{2} + O\left(\frac{1}{n}\right) \pmod{2\pi};$$
and thus (by \eqref{delta-upper})
$$ \arg \frac{p(z)}{zp'(z)} = \pm \frac{\pi}{2} + O_\eps\left(\frac{1}{n \delta(z)} \right) \pmod{2\pi}.$$
We can thus apply \eqref{arcl-2}, \eqref{cosec-asym} to conclude that
$$
 \ell(\partial E_1 \cap \Omega) = \int_\eps^{1-\sigma} \sum_{z \in \partial E_1 \cap \Omega \cap \partial D(0,r)} \left(1 + O_\eps\left( \frac{1}{n^2 \delta(z)^2} \right)\right)\ dr.
$$
By the discussion in Section \ref{conj-sec}, we have
$$ \sum_{z \in \partial E_1 \cap \Omega \cap \partial D(0,r)} 1 \leq 2n$$
which implies that
\begin{equation}\label{zero-use}
    \int_{\eps}^{1-\sigma}\sum_{z \in \partial E_1 \cap \Omega \cap \partial D(0,r)} 1 \ dr \leq \int_\eps^{1-\sigma} 2n\ dr = 2n(1-\eps-\sigma).
\end{equation}
It thus suffices to show that
\begin{equation}\label{ute}
     \int_\eps^{1-\sigma} \sum_{z \in \partial E_1 \cap \Omega \cap \partial D(0,r)} \frac{1}{\delta(z)^2}\ dr \ll_\eps n \log n.
\end{equation}
Bounding
$$ \frac{1}{\delta(z)^2} \ll_\eps 1 + \sum_{\zeta \notin D(0,\eps/2)} \frac{1}{|z - \zeta|^2}$$
and recalling from \eqref{quam-markov} that there are only $O(1/\eps)$ critical points outside of $D(0,\eps/2)$, it suffices to show that
$$ \int_\eps^{1-\sigma} \sum_{z \in \partial E_1 \cap \Omega \cap \partial D(0,r)} \frac{1}{|z - \zeta|^2}\ dr \ll_\eps n \log n$$
for any such critical point.  By dyadic decomposition, it suffices to show that
$$ \int_\eps^{1-\sigma} \# (\partial E_1 \cap \Omega \cap \partial D(0,r) \cap D(\zeta, r_1)) \ dr \ll_\eps n r_1^2$$
for all $\frac{\log^{1/2} n}{n} \leq r_1 \leq 1$.  Using \eqref{arcl-2} again, we can lower bound the left-hand side by
$$ \ll_\eps \ell( \partial E_1 \cap \Omega \cap D(\zeta,r_1)),$$
and the required bound follows from Lemma \ref{sting} (covering $D(\zeta,r_1)$ by smaller disks if $r_1$ is large).
\end{proof}

Now we handle the outer region.

\begin{proposition}[Outer bound]\label{outside}  We have
    $$ \ell(\partial E_1 \cap \Ann(0,1-\sigma,1+\sigma)) \leq 2n \sigma + 4 \log 2 + O_{C_0}\left( \frac{1}{\log n} \right).$$
\end{proposition}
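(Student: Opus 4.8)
The plan is to reduce $\ell(\partial E_1\cap\Ann(0,1-\sigma,1+\sigma))$ to an explicit one–variable integral and compare it with the model computation of Lemma \ref{out}. Since $\partial E_1\subset\overline{E_1}\subset E_4\subset D(0,1+\sigma)$ by \eqref{e1in}, the set in question is simply $\partial E_1\setminus\overline{D(0,\rho)}$ for any $\rho\in[1-\sigma,1+\sigma)$, so I would fix a radius $\rho\in(1-\sigma,1-\sigma/2)$ chosen so that $\bigl||\zeta|-\rho\bigr|\gg\sigma$ for every critical point $\zeta$ (possible since the $O(1/(1-2\sigma))=O(1)$ critical points with $|\zeta|\geq 1-2\sigma$ — the only ones that can come near $\Ann(0,1-\sigma,1+\sigma)$ by \eqref{quam-markov} — block only a set of radii of measure $O(\sigma)$ inside an interval of length $\sigma/2$), and apply the first arclength formula \eqref{arcl-1}:
$$\ell(\partial E_1\setminus\overline{D(0,\rho)})=\int_{-\pi}^{\pi}\ \sum_{z:\,p(z)=e^{i\alpha},\ |z|>\rho}\frac{1}{|p'(z)|}\ d\alpha .$$

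First I would dispose of the neighbourhoods of the $O(1)$ critical points $\zeta$ with $|\zeta|\geq 1-2\sigma$: excising disks $D(\zeta,r_*)$ with $r_*\asymp(n\log n)^{-1/2}$ and invoking Lemma \ref{sting}, the lemniscate length inside each is $\ll n r_*^2\ll 1/\log n$. On the complement, $\delta(z)\gg r_*$ (indeed $\delta\gg\sigma$ on $\partial D(0,\rho)$), so Proposition \ref{pform} applies: for $z$ on the lemniscate with $p(z)=e^{i\alpha}$ one gets $p(z)+1=\tfrac{zp'(z)}{n}(1+o(1))$, hence $\tfrac{1}{|p'(z)|}=\tfrac{|z|}{n|1+e^{i\alpha}|}(1+o(1))$ with $|z|=1+o(1)$. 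Thus, up to the excised contribution, the integrand equals $\tfrac{1+o(1)}{n|1+e^{i\alpha}|}\sum_{z}|z|$, the sum running over the roots of $p(z)=e^{i\alpha}$ in $\Ann(0,\rho,1+\sigma)$ (and there are no roots with $|z|\geq 1+\sigma$, again by \eqref{e1in}). To pass from $\sum_z|z|$ to the model value I would use the exact identity $\prod_{z:\,p(z)=e^{i\alpha}}|z|=|p(0)-e^{i\alpha}|=|1+e^{i\alpha}|(1+o(1))$ (as $p(0)=-1+O(n^{-n}\|p\|_0^{n})$) together with Rouché's theorem on $\partial D(0,\rho)$, where $|p(z)+1|\asymp(1-\sigma)^{n}$ up to the available constants, comparing $p-e^{i\alpha}$ with the constants $-1-e^{i\alpha}$ and $p-p(0)$: this should show that above a threshold $\tau$ on $|1+e^{i\alpha}|$ all $n$ roots lie in the thin annulus, and below $\tau$ all lie in $D(0,\rho)$. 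When all $n$ roots lie in $\Ann(0,\rho,1+\sigma)$ they are within $O(\sigma)$ of $1$, so their arithmetic and geometric means agree up to $1+O(\sigma^2)$, giving $\sum_z|z|=n\,|1+e^{i\alpha}|^{1/n}(1+o(1))$ and hence integrand $=|1+e^{i\alpha}|^{1/n-1}(1+o(1))$ — exactly the $p_0$–integrand of Lemma \ref{out}. Integrating over $\{|1+e^{i\alpha}|>\tau\}$ and matching with $\ell(\partial E_1(p_0)\setminus D(0,1-\sigma))=2n\sigma+4\log 2+O(\log^2 n/n)$ from \eqref{lbo-in}–\eqref{lout} would then yield the claim, provided $\tau=(1-\sigma)^{n}(1+o(1))$ so that $\log(1/\tau)=n\sigma+o(1)$; Bezout's theorem (via \eqref{tilde-r}) is what guarantees the $z$–sums are finite and of size $\leq 2n$ on each circle, keeping the excision and root-counting bookkeeping uniform.

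The main obstacle is precisely the sharpness of the threshold $\tau$: a short computation shows that even an $O(1)$ multiplicative slack in $\tau$ (let alone a polynomial-in-$n$ slack near critical points) produces an $O(1)$ — respectively $O(\log n)$ — error in the integral, which is fatal since the target error is $o(1)$. The difficulty is that $|p(z)+1|/|z|^{n}=\prod_\zeta|1-\zeta/z|\cdot(1+o(1))$ is only controlled to within a bounded factor $e^{O(\|p\|_1^2)}$ on $|z|\asymp 1$ — because in this section $\|p\|_1$ is only bounded, not small (\eqref{quam-0}) — and can lose a further $\delta(z)^{O(\|p\|_1)}$ near critical points. The way to contain this is to arrange that every such loss occurs only inside the excised $r_*$–neighbourhoods of the $O(1)$ critical points near $|z|=1$, where Lemma \ref{sting} already accounts for the entire lemniscate length, and to establish a genuinely $1+o(1)$ two–sided bound on $|p(z)+1|/|z|^n$ (equivalently, that the roots of $p(\cdot)=e^{i\alpha}$ near the origin have modulus $|1+e^{i\alpha}|^{1/n}(1+o(1))$) uniformly over $z$ bounded away from critical points — the point at which I expect the argument to require more than the crude Weierstrass-factorisation estimates, presumably a finer use of \eqref{p'-bound-lower}, \eqref{pin-2} and the localisation \eqref{markov-many} of almost all critical points to $D(0,O(1/n))$. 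The genuinely delicate subcase is a critical point $\zeta$ with $|\zeta|$ just below $1$ and $p(\zeta)$ just off $-1$, which creates a near-cusp of the lemniscate close to the unit circle; there one cannot use Proposition \ref{pform} and must fall back on the local behaviour $|p'(z)|\asymp n|z-\zeta|^{m}$ near a multiplicity-$m$ critical point, which keeps $1/|p'(z)|$ integrable (square-root singularity) against $d\alpha$ and lets Lemma \ref{sting} absorb the whole local length. Finally, the $C_0$–dependence of the error term $O_{C_0}(1/\log n)$ will enter through the $\sigma$-sized margin used to choose $\rho$ off the critical points and through the $r_*$-excision thresholds.
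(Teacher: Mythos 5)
Your skeleton (first arclength formula, excision of $O(1)$ critical-point neighbourhoods near $|z|=1$ via Lemma \ref{sting}, Proposition \ref{pform} to relate $|p'(z)|$ to $|1+e^{i\alpha}|$, and comparison with Lemma \ref{out}) matches the paper's, and you have correctly located the crux: an $O(1)$ multiplicative uncertainty in the effective threshold on $|1+e^{i\alpha}|$ costs $O(1)$ in the integral, which is fatal. But the fix you propose — establishing a genuinely $1+o(1)$ two-sided bound on $|p(z)+1|/|z|^n=\prod_\zeta|1-\zeta/z|\cdot(1+o(1))$ uniformly away from critical points — is not attainable, because the statement is simply false in the regime of this section. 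Here one only knows $\|p\|_1\ll 1$ (Proposition \ref{lemni}), not $\|p\|_1=o(1)$, so a single critical point at, say, $\zeta=1/2$ makes $\prod_\zeta|1-\zeta/z|$ oscillate between roughly $1/2$ and $3/2$ as $z$ runs over the unit circle; no refinement of \eqref{p'-bound-lower}, \eqref{pin-2} or \eqref{markov-many} can remove a genuine $\Theta(1)$ angular fluctuation. For the same reason your product identity $\prod_z|z|=|p(0)-e^{i\alpha}|$ plus AM--GM cannot close the argument: near the threshold the roots of $p(\cdot)=e^{i\alpha}$ sit at different angles with genuinely different local thresholds, so "all roots in the thin annulus versus all in the bulk" is not a clean dichotomy, and the transition range of $\alpha$ has relative length $\Theta(1)$ in the variable $t=|1+e^{i\alpha}|$, contributing $O(1)$ against the integrand $\asymp t^{-1}$.

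The missing idea is the paper's treatment of this fluctuation by \emph{angular localisation plus Jensen's inequality}. Writing $p'(z)=nz^{n-1}g(1/z)$ with $g(w)=\prod_\zeta(1-w\zeta)$, one has $g(0)=1$ and hence $\int_0^{2\pi}\log|g(e^{-i\theta})|\,d\theta\ge 0$. The circle is cut into $\asymp\log^4 n$ intervals $I$ on which (away from the few bad intervals near critical points, absorbed by Lemma \ref{sting}) $\log|g(e^{-i\theta})|$ is constant up to $O(1/\log^2 n)$, equal to its mean $c_I$. On the sector over $I$ the effective threshold is $(1-\sigma)^n e^{c_I}$, and a Rouch\'e count in that sector bounds the number of relevant roots by $n|I|(1+O(1/\log^2 n))$; this yields a per-interval bound $\bigl(2n\sigma-c_I+4\log 2+O(1/\log n)\bigr)|I|$ in which the threshold shift appears \emph{linearly} as $-c_I$. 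Summing and invoking $\sum_I c_I|I|\ge 0$ kills the fluctuation on average. Without this averaging mechanism (or an equivalent), your argument cannot get below an $O(1)$ error, so the proposal as written has a genuine gap at exactly the point you flagged.
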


\begin{proof}  A technical difficulty here is that the function $\delta$ could be very small on some portions of the annulus $\Ann(0,1-\sigma,1+\sigma)$, due to the presence of nearby critical points.  Because of this, we will need to remove an exceptional set when $\delta$ is too small, and need quite precise control on $p$ and $p'$ outside of this exceptional set to compute the lemniscate length in that region to the desired accuracy.  As such, the proof here will be among the most complicated in the paper.

We turn to the details.  From \eqref{pp-factor} we have
\begin{equation}\label{p1}
 p'(z) = n z^{n-1} g(1/z)
\end{equation}
where $g$ is the polynomial
$$ g(w) \coloneqq \prod_\zeta \left( 1 - w \zeta \right).$$
For $w \in D(0,3)$, we have $1 - w\zeta \leq \exp( O(|\zeta|) )$, hence by \eqref{quam} we have $g=O(1)$ on this disk; by the Cauchy inequality we thus have $g'=O(1)$ on $D(0,2)$.  Also, $g(0)=1$, and by \eqref{quam-markov} there are only $O(1)$ zeroes of $g$ in $D(0,2)$.  From Jensen's inequality we conclude that
$$ \int_0^{2\pi} \log |g(e^{-i\theta})|\ d\theta \geq 0.$$
We subdivide $[0,2\pi]$ into $\asymp \log^4 n$ intervals $I$ of length $\asymp \frac{1}{\log^4 n}$.  If we let $c_I$ be the mean value of $\log |g(e^{-i\theta})|$ on $I$, we conclude that
\begin{equation}\label{sumi}
 \sum_I c_I |I| \geq 0.
\end{equation}
Call an interval $I$ \emph{good} if $\delta(e^{i\theta}) \geq \frac{1}{\log^2 n}$ for at least one $\theta \in I$, and \emph{bad} otherwise.  If $I$ is good, then we have $|1- e^{i\theta} \zeta| \gg \frac{1}{\log^2 n}$ for all $\theta \in I$ and critical points $\zeta$, while from \eqref{quam} we have $|1 - e^{i\theta} \zeta| \asymp 1$ for all but $O(1)$ of the $\zeta$. From \eqref{quam} and the triangle inequality we have
\begin{equation}\label{gpp}
 \frac{g'(e^{i\theta})}{g(e^{i\theta})} = \sum_\zeta \frac{-\zeta}{1 - e^{i\theta} \zeta} \ll \log^2 n,
\end{equation}
and thus $\log |g(e^{-i\theta})|$ is $O(\log^2 n)$-Lipschitz on $I$, and thus we have
\begin{equation}\label{loggi}
    \log |g(e^{-i\theta})| = c_I + O\left(\frac{1}{\log^2 n}\right)
\end{equation}
For any $I$, direct calculation shows that
$$ \frac{1}{|I|} \int_I \log |1-w\zeta|\ d\theta \ll \log\log n$$
for any critical point, with an improved bound of $O(|\zeta|)$ if $\zeta \in D(0,1/2)$.  By \eqref{quam-markov} we thus have the bound
\begin{equation}\label{cilog}
     c_I \ll \log\log n
\end{equation}
for all intervals $I$.
Let $\Omega$ consist of those $z = re^{i\theta}$ with $1-\sigma \leq r \leq 1+\sigma$ and $\theta$ lying in a good interval $I$.
From \eqref{quam-markov} we see that the arcs $\{e^{i\theta}: \theta \in I\}$ corresponding to bad $I$ are contained in $O(1)$ disks of radius $O(\frac{1}{\log^2 n})$.  Hence the portion of $\partial E_1 \cap \Ann(0,1-\sigma,1+\sigma)$ lying outside of $\Omega$ can be covered by $O(\frac{1}{\sigma \log^2 n})$ disks of radius $\sigma$, whose center lies in $\Ann(0,1/2,2)$ (say), and hence by Lemma \ref{sting} and Proposition \ref{lemni} its contribution to the lemniscate length is
$$\ll  \frac{1}{\sigma \log^2 n} \times n \sigma^2 \ll_{C_0} \frac{1}{\log n}.$$
Thus it will suffice to show that
$$ \ell(\partial E_1 \cap \Omega) \leq 2n \sigma + 4 \log 2 + O\left(\frac{1}{\log n} \right).$$
From the first arclength formula \eqref{arcl-1} from Lemma \ref{arclength}, we can bound
$$ \ell(\partial E_1 \cap \Omega) \leq \int_{-\pi}^{\pi} \sum_{z \in \overline{\Omega}: p(z) = e^{i\alpha}} \frac{1}{|p'(z)|} \ d\alpha.$$
For $z = re^{i\theta}$ contributing to the above integral, we have $1-\sigma \leq r \leq 1+\sigma$, $\theta$ in a good interval $I$, and by \eqref{p1}, \eqref{loggi} we have
\begin{equation}\label{ppz}
 |p'(z)| = n |z|^{n-1} e^{c_I} \left( 1 + O\left( \frac{1}{\log^2 n} \right) \right).
\end{equation}
From \eqref{pform-alt} and \eqref{p1}, \eqref{loggi} we have
$$ |1+e^{i\alpha}| = |1+p(z)| = |z|^n e^{c_I} \left( 1 + O\left( \frac{1}{\log^2 n} \right) \right)$$
and hence
\begin{equation}\label{zi}
 |z| = e^{-c_I/n} |1 + e^{i\alpha}|^{1/n} \left( 1 + O\left( \frac{1}{n \log^2 n} \right) \right).
\end{equation}
Substituting this into \eqref{ppz} we have
$$ |p'(z)| = n |1 + e^{i\alpha}|^{\frac{n-1}{n}} e^{c_I/n} \left( 1 + O\left( \frac{1}{\log^2 n} \right) \right)$$
which by \eqref{cilog} simplifies to
$$ |p'(z)| = n |1 + e^{i\alpha}|^{\frac{n-1}{n}} \left( 1 + O\left( \frac{1}{\log^2 n} \right) \right).$$
Hence
$$ \ell(\partial E_1 \cap \Omega) \leq \left(1 + O\left( \frac{1}{\log^2 n} \right)\right) \int_{-\pi}^{\pi} |1 + e^{i\alpha}|^{\frac{n-1}{n}} \frac{\# \{ z \in \overline{\Omega}: p(z) = e^{i\alpha} \}}{n} \ d\alpha.$$
It will thus suffice to show that
$$ \int_{-\pi}^{\pi} |1 + e^{i\alpha}|^{\frac{n-1}{n}} \frac{\# \{ z \in \overline{\Omega}: p(z) = e^{i\alpha} \}}{n} \ d\alpha \leq 2n\sigma + 4 \log 2 + O\left( \frac{1}{\log n} \right).$$
We split the left-hand side as
$$ \sum_{I \text{ good}} \int_{-\pi}^\pi |1 + e^{i\alpha}|^{\frac{n-1}{n}} \frac{m(I,\alpha)}{n} \ d\alpha$$
where
$$ m(I,\alpha) \coloneqq \# \{ re^{i\theta}: 1-\sigma \leq r \leq 1+\sigma, \theta \in I, p(re^{i\theta}) = e^{i\alpha} \}.$$
By \eqref{sumi}, it will suffice to show that
$$ \int_{-\pi}^\pi |1 + e^{i\alpha}|^{\frac{n-1}{n}} \frac{m(I,\alpha)}{n} \ d\alpha
\leq \left(2n\sigma - c_I + 4 \log 2 + O\left( \frac{1}{\log n} \right)\right) |I|$$
for each good interval $I$.
(Note from \eqref{cilog} that the right-hand side is non-negative for the bad intervals $I$, so one can add them to that side when summing over $I$.)

Fix a good interval $I$. If there is an $re^{i\theta}$ contributing to the quantity $m(I,\alpha)$, then by \eqref{zi} one has
\begin{align}
 |1 + e^{i\alpha}|^{1/n} &\geq e^{c_I/n} (1-\sigma) \left( 1 - O\left( \frac{1}{n \log^2 n} \right) \right) \nonumber\\
 &= 1 - \sigma + \frac{c_I}{n}- O\left( \frac{1}{n \log^2 n} \right).\label{wibble}
\end{align}
If we can show that
\begin{equation}\label{count}
     m(I,\alpha) \leq n|I| \left( 1 + O\left( \frac{1}{\log^2 n} \right) \right)
\end{equation}
for such $\alpha$, then the claim will follow from Lemma \ref{out} and \eqref{ep0}.  Thus it remains to show \eqref{count}.

Let $\theta_0$ be the midpoint of $I$.  Integrating \eqref{gpp}, we have
$$ g\left(\frac{1}{re^{i\theta}}\right) = g(e^{-i\theta_0}) \left( 1 + O\left(\frac{1}{\log^2 n} \right)\right)$$
whenever $1-2\sigma \leq r \leq 1+2\sigma$ and $\theta = \theta_0 + O(\frac{1}{\log^4 n})$.
From \eqref{pform-alt} and \eqref{p1}, we conclude that
\begin{equation}\label{pra}
 p(re^{i\theta}) = -1 + r^n e^{in\theta} g(e^{-i\theta_0}) \left( 1 + O\left(\frac{1}{\log^2 n} \right)\right)
\end{equation}
for such $r, \theta$.  From \eqref{loggi}, \eqref{cilog} one has $\log |g(e^{-i\theta_0})| \ll \log\log n$, thus
\begin{equation}\label{glog}
 \log^{-O(1)} n \ll |g(e^{-i\theta_0})| \ll \log^{O(1)} n.
\end{equation}

One can enclose $I$ in an interval $[\theta_-, \theta_+]$ of length $|I| + O\left(\frac{1}{n} \right) = |I| \left( 1 + O\left(\frac{1}{\log^2 n} \right) \right)$ such that $e^{in\theta_\pm} g(e^{-i\theta_0})$ is a positive real.  For $z$ in the sector
$$\Gamma \coloneqq \{re^{i\theta}: 1-2\sigma \leq r \leq 1+2\sigma, \theta \in [\theta_-, \theta_+] \}$$
we see from \eqref{pra}, \eqref{loggi} that we can split
$$ p(z) - e^{i\alpha} = f(z) + h(z)$$
where
$$ f(z) \coloneqq z^n g(e^{-i\theta_0}) - (1+e^{i\alpha})$$
and
$$ |h(z)| \ll \frac{|z|^n e^{c_I}}{\log^2 n}.$$
Direct counting shows that the number of zeroes to $f(z)$ (i.e., the $n^{\mathrm{th}}$ roots of $(1+e^{i\alpha}) / g(e^{-i\theta_0})$) in $\Gamma$ is at most $n|I| \left( 1 + O\left( \frac{1}{\log^2 n} \right) \right)$.  By Rouche's theorem, it will now suffice to show the pointwise bound $|h(z)| < |f(z)|$ for $z$ on the boundary of $\Gamma$.  When $|z| = 1 + 2\sigma$, we see from \eqref{loggi} and the definition of $\sigma$ that
\begin{equation}\label{fz}
    |f(z)| \gg |z|^n e^{c_I}
\end{equation}
which gives the claim in this case.  if $z$ lies on one of the rays $\theta = \theta_\pm$, then $z^n g(e^{-i\theta_0})$ is a positive real and so we again obtain \eqref{fz}, again giving the claim in this case.  Finally, when $|z| = 1 - 2\sigma$, then from \eqref{wibble}, \eqref{glog}, and \eqref{cilog} one has
$$ |f(z)| \asymp |1+e^{i\alpha}| \gg (1-\sigma)^n e^{c_I}$$
while $h(z) \ll (1-2\sigma)^n e^{c_I}$, giving the claim from the definition of $\sigma$.
\end{proof}

Inserting Propositions \ref{inside}, \ref{annulus}, \ref{outside} into \eqref{split}, we obtain
\begin{equation}\label{dip}
 \ell(\partial E_1 \backslash D(0,\rho)) \leq 2n + 4 \log 2 - \Psi(D(0,\rho)) + O(\eps^{1/2}) + O_\eps\left(\frac{\log n}{n} \right) + O_{C_0}\left( \frac{1}{\log n} \right).
\end{equation}
Sending $\rho \to 0$, and noting that $\eps$ can be made arbitrarily small, we obtain Theorem \ref{main-thm}(iii).

\section{The final result}\label{main-iv-sec}

We are now ready to establish the last and strongest component of Theorem \ref{main-thm}, namely part (iv).  Assume that $n$ is sufficiently large, and that $p$ is a normalized maximizing polynomial of degree $n$.  As before we abbreviate $E_r(p)$ as $E_r$. We will show that $\|p\|=0$, and hence $p = p_0$.

We may assume for sake of contradiction that
\begin{equation}\label{p-contra}
    \|p\| > 0.
\end{equation}
The first step is to obtain the following improvement of \eqref{quam-0}, again in the spirit of Heuristic \ref{main-heuristic}.

\begin{proposition}\label{ets}  We have
    $$ \|p\|_1 = o(1).$$
    In particular, we have $\zeta=o(1)$ for all critical points $\zeta$.
\end{proposition}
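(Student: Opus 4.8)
Since Proposition \ref{lemni} already gives $\|p\|_1 = O(1)$, it suffices to prove that for every fixed $\eta > 0$ one has $\|p\|_1 < \eta$ once $n$ is large; so suppose for contradiction that $\|p\|_1 \geq \eta$, and recall that, $p$ being a maximizer, $\ell(\partial E_1) \geq \ell(\partial E_1(p_0)) = 2n + 4\log 2 + O(1/n)$ by \eqref{ep0}. The plan is to re-run the region decomposition \eqref{split}--\eqref{dip} of Section \ref{main-iii-sec} with a small \emph{fixed} radius $\eps = \eps(\eta) > 0$ to be chosen at the end, sharpened in two places. First, in the inner bound (Proposition \ref{inside}): its proof uses the crude estimate $\Psi(D(0,\eps)) \leq 2(n-1)\eps$ only as a main term, and replacing it by the defect estimate of Corollary \ref{defect-psi-cor} --- applied with $z_0 = 0$, $r = \eps$, $C = 2$ (the hypothesis $r \geq 10\|p\|_1/n$ holding by Proposition \ref{lemni}, and $D(0,\eps) \subset E_2$ by \eqref{incl}) --- upgrades it to $\Psi(D(0,\eps)) \leq 2(n-1)\eps - c\,\Disp\{\zeta : |\zeta| < 2\eps\}$ for an absolute $c > 0$; the choice $C = 2$ is made so that the dispersion of the ``marginal'' critical points of modulus near $\eps$ is still captured. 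Since the remaining estimates in the proof of Proposition \ref{inside} are untouched, one obtains $\ell(\partial E_1 \cap \Ann(0,\rho,\eps)) \leq 2\eps n - \Psi(D(0,\rho)) - c\,\Disp\{\zeta : |\zeta| < 2\eps\} + O(\eps^{1/2})$.

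The second sharpening is in the intermediate bound (Proposition \ref{annulus}), which uses $\#(\partial E_1 \cap \partial D(0,r) \cap \Omega) \leq 2n$ in \eqref{zero-use}. I would instead establish, for all but countably many $r$ with $\eps < r < 1-\sigma$, the sharper bound $\#(\partial E_1 \cap \partial D(0,r)) \leq 2n - 2\#\{\zeta : |\zeta| > r\}$ --- morally, each critical point outside $D(0,r)$ forces two petals of the lemniscate to merge before reaching radius $r$. This should come from counting the zeroes of the degree-$2n$ polynomial in \eqref{tilde-r} together with the reflection $z \mapsto r^2/\bar z$, which (as noted in Section \ref{conj-sec}) maps its zero set to itself and hence pairs the zeroes inside $D(0,r)$ with those outside; alternatively from tracking the number of connected components of $E_1 \cap \overline{D(0,r)}$ as $r$ crosses a critical modulus (a Riemann--Hurwitz-type count). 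Restricting to $\Omega$ only decreases the count, so carrying this through the proof of Proposition \ref{annulus} and using $\int_\eps^{1-\sigma} 2\#\{\zeta : |\zeta| > r\}\, dr = 2\sum_{|\zeta| > \eps}(\min(|\zeta|,1-\sigma) - \eps)$ gives $\ell(\partial E_1 \cap \Ann(0,\eps,1-\sigma)) \leq 2n(1-\eps-\sigma) - 2\sum_{|\zeta| > \eps}(\min(|\zeta|,1-\sigma) - \eps) + O_\eps(\log n/n)$; since $\#\{\zeta : |\zeta| > 1-\sigma\} = O(1)$ with $\sigma = O(\log n/n)$, and $|\zeta| - \eps \geq |\zeta|/2$ once $|\zeta| \geq 2\eps$, the new gain is at least $\sum_{|\zeta| \geq 2\eps}|\zeta| - o_\eps(1)$.

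Inserting these bounds together with the unchanged outer bound (Proposition \ref{outside}) into \eqref{split} and letting $\rho \to 0$ yields $\ell(\partial E_1) \leq 2n + 4\log 2 - c\,\Disp\{\zeta : |\zeta| < 2\eps\} - \sum_{|\zeta| \geq 2\eps}|\zeta| + O(\eps^{1/2}) + o_\eps(1)$. By \eqref{markov-many} (using $\|p\|_1 = O(1)$) there are $\gg n$ critical points within $O(1/n)$ of the origin, so by \eqref{zeta-mean} and \eqref{disp-upper} (cf. Lemma \ref{disp-split}) the mean of $\{\zeta : |\zeta| < 2\eps\}$ has modulus $O(\tfrac1n \sum_{|\zeta| \geq 2\eps}|\zeta|)$ and hence $\|p\|_1 \ll \Disp\{\zeta : |\zeta| < 2\eps\} + \sum_{|\zeta| \geq 2\eps}|\zeta|$; thus the total gain above is $\gg \|p\|_1 \geq \eta$. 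Choosing $\eps = \eps(\eta)$ small enough that $O(\eps^{1/2}) + o_\eps(1)$ is less than half this gain for $n$ large, we get $\ell(\partial E_1) \leq 2n + 4\log 2 - c''\eta$ for an absolute $c'' > 0$, contradicting the maximizer lower bound; as $\eta$ was arbitrary, $\|p\|_1 = o(1)$, and then $|\zeta| \leq \|p\|_1 = o(1)$ for every critical point. The one genuinely hard step is the refined intersection count of the second paragraph: one must handle critical points and zeroes of $p$ lying very close to $\partial D(0,r)$, and keep the bound uniform in $r$, while bookkeeping all accumulated errors so they stay strictly below the gain $\asymp \|p\|_1$ --- delicate precisely because $\|p\|_1$ is only known to be $O(1)$ and could be small.
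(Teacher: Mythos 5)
Your overall strategy is the same as the paper's: extract the dispersion of the near-origin critical points from the defect in the triangle inequality on the inner disk (Corollary \ref{defect-psi-cor} applied to $\Psi(D(0,\eps))$), extract $\sum_{\zeta \notin D(0,O(\eps))}|\zeta|$ from a refined count of $\partial E_1 \cap \partial D(0,r)$ in the intermediate annulus, and combine the two gains via the mechanism of Lemma \ref{disp-split} to dominate $\|p\|_1$. (The paper phrases this as ``the upper bounds \eqref{est-1}--\eqref{est-3} must be nearly attained, hence each defect term is $O(\eps^{1/2})$'' rather than as a contradiction with a fixed $\eta$, but that is only a cosmetic difference; your rederivation of the disperson-splitting inequality without the spurious factor of $n$ is also correct.)

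The one genuine gap is the step you yourself flag: the refined intersection count. Neither of your two proposed routes closes it as stated. The inversion symmetry $z \mapsto r^2/\bar z$ of the zero set of \eqref{tilde-r} does give $\#(\partial E_1 \cap \partial D(0,r)) = 2n - 2\#\{\text{zeroes of \eqref{tilde-r} strictly inside } D(0,r)\}$, but this merely converts the on-circle count into a count of zeroes inside the disk, which still has to be bounded from below by $\#\{\zeta : |\zeta| > r\}$ --- and that estimate is exactly the hard part. The paper proves the weaker (but sufficient, since only a factor of $1$ rather than $2$ is needed) bound \eqref{lower}, namely $2n - \#(\partial E_1 \cap \Omega \cap \partial D(0,r)) \geq \#\{\zeta : |\zeta| \geq r+\eps\}$, as follows: using \eqref{quam-markov} it selects a radius $r \leq r' \leq r + \eps/2$ with $\delta \gg_\eps 1$ on all of $\partial D(0,r')$ (this is how the paper handles your worry about critical points near the circle, at the cost of the offset $r+\eps$ in the count, which is harmless after integrating in $r$); it then uses Proposition \ref{pform} and \eqref{pin-2} to show that on $\partial D(0,r')$ one has $z^n p(z)\tilde p(r^2/z) - z^n = -z^{n+1}p'(z)/n \cdot (1+O_\eps(1/n))$, so that Rouche's theorem gives exactly $2n - \#\{\zeta \notin D(0,r')\}$ zeroes of \eqref{tilde-r} in $D(0,r')$, and the circle $\partial D(0,r)$ lies inside that disk. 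You would need to supply this Rouche argument (or an equivalent) to complete your proof; with it in place, the rest of your write-up goes through.
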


\begin{proof} Let $\eps > 0$ be a small constant. It will suffice to show that
\begin{equation}\label{sumzeta}
 \|p\|_1 \ll \eps^{1/2}
\end{equation}
if $n$ is sufficiently large depending on $\eps$.

From \eqref{lupin} and the hypothesis that $p$ is a maximizing polynomial, one has
\begin{equation}\label{eo}
     \ell(\partial E_1) \geq \ell(\partial E_1(p_0)) = 2n + 4 \log 2 + o(1).
\end{equation}
Comparing the above bound with the analysis in the previous section (particularly Propositions \ref{inside}, \ref{annulus}, \ref{out}), we see that for any small absolute constant $\eps>0$ one must have
\begin{align}
    \ell(\partial E_1 \cap D(0,\eps)) &= 2n\eps  + O(\eps^{1/2}) \label{est-1} \\
     \ell(\partial E_1 \cap \Ann(0,\eps,1-\sigma)) &= 2n (1-\eps-\sigma) +  O(\eps^{1/2}) \label{est-2} \\
      \ell(\partial E_1 \cap \Ann(0,1-\sigma,1+\sigma)) &= 2n \sigma + 4 \log 2 + O(\eps^{1/2}) \label{est-3}
\end{align}
if $n$ is sufficiently large depending on $\eps$.  We can use this information to get further control on the critical points $\zeta$, improving upon \eqref{quam}.

Firstly, by comparing \eqref{est-1} with the proof of Proposition \ref{inside} (focusing in particular on \eqref{tri-use}), we see that
$$ \Psi(D(0,\eps)) = (n-1) 2 \eps - O(\eps^{1/2}),$$
and hence by Corollary \ref{defect-psi-cor} we have
\begin{equation}\label{zet-1}
 \Disp \{ \zeta: \zeta \in D(0,10\eps)\} \ll \eps^{1/2}.
\end{equation}

Next, comparing \eqref{est-2} with the proof of Proposition \ref{annulus} (focusing in particular on \eqref{zero-use}), we have
$$ \int_{\eps}^{1-\sigma} \sum_{z \in \partial E_1 \cap \Omega \cap \partial D(0,r)} 1 \ dr = 2n(1-\eps-\sigma) - O(\eps^{1/2})$$
or equivalently
$$ \int_{\eps}^{1-\sigma} \left(2n - \sum_{z \in \partial E_1 \cap \Omega \cap \partial D(0,r)} 1 \right) \ dr \ll \eps^{1/2}.$$
We will shortly show the lower bound
\begin{equation}\label{lower}
 2n - \sum_{z \in \partial E_1 \cap \Omega \cap \partial D(0,r)} 1  \geq \# \{ \zeta: |\zeta| \geq r + \eps \}
\end{equation}
for any $\eps \leq r \leq 1/2$.  Assuming this, we conclude that
$$ \sum_\zeta \int_{\eps}^{1/2} 1_{|\zeta| \geq r + \eps}\ dr \ll \eps^{1/2}.$$
Performing the inner integral for $\zeta \notin D(0,10\eps)$ (and using \eqref{crit-bound-2}) we have
\begin{equation}\label{zet-2}
     \sum_{\zeta\notin D(0,10\eps)} |\zeta| \ll \eps^{1/2}.
\end{equation}
Combining \eqref{zet-1} and \eqref{zet-2}  using Lemma \ref{disp-split}, we obtain \eqref{sumzeta} as desired.

It remains to show \eqref{lower} for a given $\eps \leq r \leq 1/2$. Observe that if $r+\eps/4 \leq r' \leq r+\eps/2$ is such that $\delta(z) \leq \delta_0$ for some $\delta_0 > 0$ and $z \in \partial D(0,r')$, then (by \eqref{quam-markov}) $r'$ will lie in the union of $O_\eps(1)$ intervals of length $O_\eps(\delta_0)$.  Setting $\delta_0$ to be sufficiently small depending on $\eps$, the measure of this union is strictly less than the measure of the interval $[r+\eps/4, r+\eps/2]$.  We conclude that we can find a radius $r \leq r' \leq r+\eps/2$ such that $\delta(z) \gg_\eps 1$ for all $z \in \partial D(0,r')$.

From Section \ref{conj-sec}, the condition $|p(z)| = 1$ is equivalent to $z^n p(z) \tilde p(r^2/z) - z^n = 0$.  It will suffice to show that the polynomial $z^n p(z) \tilde p(r^2/z) - z^n$ has at most $2n - k$ zeroes in $D(0,r')$, where $k$ is the number of critical points $\zeta$ with $|\zeta| \geq r + \eps$.

For $z \in \partial D(0,r')$, we can use \eqref{pform-alt} (to control $p(z)$) and \eqref{pin-2} (to control $\tilde p(r^2/z)$) to write
$$ z^n p(z) \tilde p(r^2/z) - z^n$$
as
$$ z^n \left( -1 + \frac{zp'(z)}{n} \left( 1 + O_\eps\left(\frac{1}{n} \right) \right) \right)
\left( -1 + O_\eps( (r^2/r')^n ) \right) - z^n.$$
From \eqref{pvar}, \eqref{quam} and the hypothesis $\delta(z) \gg_\eps 1$ we have
$$ |p'(z)| \asymp_\eps n |z|^{n-1} = n (r')^n$$
and hence the $O_\eps( (r^2/r')^n )$ term can be absorbed into the $O_\eps(1/n)$ term since $r' \geq r + \eps/4$.  Thus we can simplify the previous expression to
$$ - z^n \frac{zp'(z)}{n} \left( 1 + O_\eps\left(\frac{1}{n} \right) \right).$$
By Rouche's theorem, the number of zeroes of this expression inside $D(0,r')$ is equal to the number of zeroes of
$$ - z^n \frac{zp'(z)}{n},$$
inside the same disk, which by \eqref{pp-factor} is equal to $2n$ minus the number of critical points outside of $D(0,r')$.  Since this latter number is at least $k$, we obtain the claim.
\end{proof}

Now we can improve Proposition \ref{lemni}:

\begin{proposition}\label{pots}  We have
    $$ \|p\| = o(1).$$
\end{proposition}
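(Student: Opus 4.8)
The plan is to read off this improvement of Proposition \ref{lemni} from Proposition \ref{ets} together with the sharp length estimate established in Section \ref{main-iii-sec}. Since $\|p\| = \|p\|_1 + \|p\|_0$ and $\|p\|_1 = o(1)$ by Proposition \ref{ets}, and since Lemma \ref{origin-repulsion} gives $\|p\|_0 \ll \|p\|_1 + n\,\dist(0,\partial E_1)$, it suffices to prove that $nd = o(1)$, where $d \coloneqq \dist(0,\partial E_1)$; we may assume $d>0$, the case $d=0$ being trivial.

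Fix a small $\eps>0$ and let $n$ be large depending on $\eps$; recall $\|p\| \ll 1$ by Proposition \ref{lemni}, so the analysis of Section \ref{main-iii-sec} applies. Since $D(0,d)$ is disjoint from $\partial E_1$, for any $0 < \rho < \min(d,\eps)$ we have $\ell(\partial E_1 \backslash D(0,\rho)) = \ell(\partial E_1)$, and \eqref{dip} therefore gives
\[
\ell(\partial E_1) \le 2n + 4\log 2 - \Psi(D(0,\rho)) + O(\eps^{1/2}) + o(1).
\]
On the other hand, Lemma \ref{ankh} and Proposition \ref{ets} give $\Psi(D(0,\rho)) \ge \Psi(\Ann(0,\rho/2,\rho)) \ge (n-1)\rho - o(1)$, while maximality of $p$ and \eqref{ep0} give $\ell(\partial E_1) \ge \ell(\partial E_1(p_0)) = 2n + 4\log 2 + o(1)$. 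Combining these, $(n-1)\rho \le O(\eps^{1/2}) + o(1)$ for every $0<\rho<\min(d,\eps)$. If $d \ge \eps$, taking $\rho = \eps/2$ forces $(n-1)\eps/2 \le O(\eps^{1/2}) + o(1)$, which is false for $n$ large; hence $d < \eps$, and letting $\rho \uparrow d$ then gives $nd \ll \eps^{1/2} + o(1)$. Since $\eps>0$ was arbitrary, $nd = o(1)$, and therefore $\|p\| \ll \|p\|_1 + nd = o(1)$.

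Nearly all of the analytic content of this proposition lies in the results it invokes; the point of Proposition \ref{ets} is precisely to upgrade the error term $O(\|p\|_1)$ in Lemma \ref{ankh} (and hence in the argument behind Proposition \ref{lemni}) from $O(1)$ to $o(1)$, which is what turns the qualitative bound $\|p\| \ll 1$ into the quantitative $\|p\| = o(1)$. The only real point requiring care is the bookkeeping of the two asymptotic parameters: one must fix $\eps$, together with the large constant $C_0$ from \eqref{sigma-def} entering \eqref{dip}, \emph{before} letting $n\to\infty$, and only afterwards let $\eps\to 0$; with this ordering the errors $O_\eps(\log n/n)$ and $O_{C_0}(1/\log n)$ in \eqref{dip} are harmless. (An even quicker route would observe that by \eqref{markov-many} some critical point $\zeta$ has $|\zeta| < 2\|p\|_1/n$, and since the lemniscate of a normalized maximizer contains all of its critical points this already gives $\dist(0,\partial E_1) < 2\|p\|_1/n = o(1/n)$; but, in keeping with the conventions of the rest of the paper, we avoid relying on the non-elementary fact that $\partial E_1$ contains the critical points.)
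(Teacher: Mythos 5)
Your proof is correct and follows essentially the same route as the paper: reduce via Proposition \ref{ets} and Lemma \ref{origin-repulsion} to showing that the lemniscate approaches the origin at scale $o(1/n)$, then play \eqref{dip} against the lower bound of Lemma \ref{ankh} and the maximality bound \eqref{eo}. The only difference is cosmetic --- the paper argues by contradiction at the specific radius $\rho = C\eps^{1/2}/n$ rather than optimizing over $0<\rho<\min(d,\eps)$ as you do.
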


\begin{proof}
 Let $C>0$ be a large constant, and let $\eps>0$ be small.   By Proposition \ref{ets} and Lemma \ref{origin-repulsion}, it suffices to show that lemniscate $\partial E_1$ enters $D(0,C \eps^{1/2}/n)$ for $n$ sufficiently large depending on $C,\eps$.

Suppose this is not the case.  Setting $\rho = C\eps^{1/2}/n$ in \eqref{dip}, we conclude that
$$ \ell(\partial E_1) \leq 2n + 4 \log 2 - \Psi(D(0,C\eps^{1/2}/n)) + O(\eps^{1/2}) + O_\eps\left(\frac{\log n}{n} \right) + O_{C_0}\left( \frac{1}{\log n} \right),$$
which on comparison with \eqref{eo} gives
$$ \Psi(D(0,C\eps^{1/2}/n)) \ll \eps^{1/2}$$
for $n$ large enough.  But from Lemma \ref{ankh} and Proposition \ref{ets} we have
$$ \Psi(D(0,C\eps^{1/2}/n)) \geq (n-1) \frac{C\eps^{1/2}}{2n} - o(1),$$
giving the required contradiction if $C$ is large enough.
\end{proof}

Let $C_0$ be a large constant to be chosen later. Inspired by the arguments in \cite[\S 6]{fryntov}, and in the spirit of Heuristic \ref{main-heuristic}, we will now seek to establish the inequality
\begin{equation}\label{a-ineq}
    \ell( \partial E_1(p) ) \leq \ell( \partial E_1(p_0) ) - c \|p\| + O\left(\frac{\|p\|}{C_0}\right)
\end{equation}
for some absolute constant $c>0$ (independent of $C_0$), for $n$ large enough; from this, \eqref{eo}, and Proposition \ref{pots} we conclude (for $C_0$ large enough) that $\|p\|=0$, contradicting \eqref{p-contra}.

It remains to establish \eqref{a-ineq}. Set
$$ r_- \coloneqq \frac{\|p\|}{C_0^2}; \quad r_+ \coloneqq C_0^2 \|p\|.$$
Note from Proposition \ref{pots}, \eqref{p-contra} that $0 < r_- < r_+ = o(1)$.
Similarly to \eqref{split}, we split
\begin{equation}\label{split-2}
 \ell(\partial E_1) = \ell(\partial E_1 \cap D(0,r_-)) + \ell(\partial E_1 \cap \Ann(0,r_-,r_+)) + \ell(\partial E_1 \cap \Ann(0,r_+,1+\sigma)),
\end{equation}
and estimate the three components separately.

We begin with an analogue of Proposition \ref{inside}, which achieves additional gains consistent with Heuristic \ref{main-heuristic}.

\begin{proposition}[Inner bound]\label{inside-2}  We have
\begin{equation}\label{in2-first}
     \ell(\partial E_1 \cap D(0,r_-)) \leq 2n r_- - c \Disp \{ \zeta : \zeta \in D(0,10 r_-) \} + O\left(\frac{\|p\|}{C_0}\right)
\end{equation}
for an absolute constant $c>0$ (independent of $C_0$).  If furthermore
\begin{equation}\label{pp0}
 \|p\| \geq C \|p\|_1,
\end{equation}
for a large absolute constant $C$ (independent of $C_0, c$), we can obtain the refinement
\begin{equation}\label{in2-second}
     \ell(\partial E_1 \cap D(0,r_-)) \leq 2n r_- - c \Disp \{ \zeta : \zeta \in D(0,10 r_-) \} - c \|p\| + O\left(\frac{\|p\|}{C_0}\right)
\end{equation}
\end{proposition}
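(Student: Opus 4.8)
The plan is to mimic the proof of Proposition~\ref{inside}: apply Theorem~\ref{stokes} on a small region centred at the origin with the weight $\lambda(z)=\mu n/|z|$ for a small $\mu$ to be chosen, bound the main term $\Psi$ by the sharpened inequality of Corollary~\ref{defect-psi-cor}, and control the error terms $X_1,\dots,X_5$ via the estimates of Section~\ref{main-ii-sec}. Throughout we use Propositions~\ref{ets} and \ref{pots}, which give $r_-=\|p\|/C_0^2=o(1)$ and $\|p\|_1\le\|p\|=o(1)$; in particular $D(0,r_-)\subset D(0,1/2)\subset E_2$ for $n$ large (since $p(z)=-1+o(1)$ on $D(0,1/2)$ by \eqref{pin-2}), and $r_-\ge 10\|p\|_1/n$ for $n$ large, so that Corollary~\ref{defect-psi-cor} applies with $z_0=0$, $r=r_-$, $C=10$.

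To prove \eqref{in2-first}, take $\Omega=D(0,r_-)$ and $\lambda(z)=\mu n/|z|$, mollified at the origin as in Proposition~\ref{inside}. Theorem~\ref{stokes} gives $\ell(\partial E_1\cap D(0,r_-))\le\Psi(D(0,r_-))+O(X_1+\dots+X_5)$, and Corollary~\ref{defect-psi-cor} bounds $\Psi(D(0,r_-))\le 2(n-1)r_--c\,\Disp\{\zeta:\zeta\in D(0,10r_-)\}=2nr_--c\,\Disp\{\zeta:\zeta\in D(0,10r_-)\}+O(r_-)$. For the error terms: Proposition~\ref{pvar}(i),(ii) bounds $|p'|$ on $D(0,r_-)$ by $e^{O(n)}$ times a high power of $r_-$, whence $X_1=o(r_-)$; Lemma~\ref{x2-lem} gives $X_2\ll\mu\|p\|_1\le\mu\|p\|$; Lemma~\ref{x3-lem} with $r=r_-$ gives $X_3\ll r_-/\mu+\|p\|_1(\log n)/(\mu n)$; and direct computation gives $X_4,X_5\ll r_-$. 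Choosing $\mu\asymp 1/C_0$ (small enough for Lemma~\ref{x2-lem}) makes $X_2\ll\|p\|/C_0$ and $X_3\ll C_0 r_-+C_0\|p\|_1(\log n)/n=\|p\|/C_0+o(\|p\|)$, while $X_1,X_4,X_5\ll r_-\le\|p\|/C_0$; since also $2r_-=O(\|p\|/C_0)$, this yields \eqref{in2-first}.

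For the refinement \eqref{in2-second} we exploit the origin repulsion. Under \eqref{pp0} with $C\ge 2$ one has $\|p\|_0=\|p\|-\|p\|_1\ge\|p\|/2$, so $\|p\|_0\asymp\|p\|$. I claim $\partial E_1$ avoids the disk $D(0,\rho_0)$ with $\rho_0\coloneqq c_0\|p\|_0/n$ for a small absolute constant $c_0>0$: since $p(0)=-1-|1+p(0)|$ with $|1+p(0)|=n^{-n}\|p\|_0^n$, the reverse triangle inequality gives $|p(z)|\ge|p(0)|-|p(z)-p(0)|=1+n^{-n}\|p\|_0^n-|p(z)-p(0)|$, and for $|z|\le\rho_0$ the estimates \eqref{pocl} (when $|z|\le\|p\|_1/n$) and \eqref{poz} (when $\|p\|_1/n\le|z|\le\rho_0$, so $\|p\|_1/|z|\le n$) give $|p(z)-p(0)|\ll (c_0e^{O(1)})^n n^{-n}\|p\|_0^n<n^{-n}\|p\|_0^n$ once $c_0$ is small and $C$ large (the latter ensuring $\|p\|_0>e^{O(1)}\|p\|_1$); hence $|p(z)|>1$ there. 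Thus $\ell(\partial E_1\cap D(0,r_-))=\ell(\partial E_1\cap\overline{\Ann(0,\rho_0,r_-)})$ (perturbing $r_-$ if necessary so $\partial D(0,r_-)\not\subset\partial E_1$), and we run the Stokes argument above with $\Omega=\Ann(0,\rho_0,r_-)$, a genuine annulus (no mollification, and $X_5$ picks up only the harmless $2\pi\rho_0\ll\|p\|/C_0$). Now $\Psi(E_2\cap\Omega)=\Psi(D(0,r_-))-\Psi(D(0,\rho_0))$; the first term is bounded as before, and for the second Lemma~\ref{ankh} gives $\Psi(D(0,\rho_0))\ge\Psi(\Ann(0,\rho_0/2,\rho_0))\ge(n-1)\rho_0-O(\|p\|_1)\ge\tfrac{c_0}{4}\|p\|-O(\|p\|/C)$, which is $\gg\|p\|$ once $C$ is large depending on $c_0$. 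Absorbing and relabelling the resulting absolute constant as $c$ (smaller than both the constant of Corollary~\ref{defect-psi-cor} and the $\Psi(D(0,\rho_0))$ gain, and independent of $C_0$) yields \eqref{in2-second}.

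The main obstacle is the origin-avoidance step: one has to turn $|1+p(0)|=n^{-n}\|p\|_0^n$ into an honest gap of width $\asymp\|p\|_0/n\asymp\|p\|/n$ between $\partial E_1$ and the origin, and then convert the ``missing'' portion of $\Psi$ on that gap into the $-c\|p\|$ gain via Lemma~\ref{ankh}, all while tracking which constants are absolute and which may depend on $C_0$, so that the final $c$ in \eqref{in2-second} stays independent of $C_0$ (as it must for the contradiction leading to \eqref{a-ineq}).
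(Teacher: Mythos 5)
Your proposal is correct and follows essentially the same route as the paper: Theorem \ref{stokes} with $\Omega=D(0,r_-)$ (resp.\ an annulus avoiding the origin) and $\lambda=\mu n/|z|$, $\mu\asymp 1/C_0$, with $\Psi$ bounded via Corollary \ref{defect-psi-cor} and the extra $-c\|p\|$ gain extracted from Lemma \ref{ankh} on the origin-avoided disk; the paper simply invokes Lemma \ref{origin-repulsion} for the origin avoidance rather than re-deriving it. One small slip in your re-derivation: you write $p(0)=-1-|1+p(0)|$ and conclude $|p(z)|>1$ near the origin, but the normalization only forces $p(0)\leq 0$, so one may have $p(0)>-1$ and $|p(0)|=1-n^{-n}\|p\|_0^n$; the correct statement is that any $z\in\partial E_1$ must satisfy $|p(z)-p(0)|\geq|1+p(0)|=n^{-n}\|p\|_0^n$ as in \eqref{npz}, which combined with your \eqref{pocl}/\eqref{poz} estimates still yields the avoidance of $D(0,c_0\|p\|_0/n)$ (i.e., exactly Lemma \ref{origin-repulsion}), so the argument goes through unchanged.
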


\begin{proof} As in the proof of Proposition \ref{inside}, we apply Theorem \ref{stokes} with $\Omega = D(0,r_-)$ and\footnote{Technically, one cannot directly apply Theorem \ref{stokes} here due to the singularity of $\lambda$ at the origin, but this is easily dealt with by replacing $\Omega$ with $\Ann(0,\rho,r_-)$ and then taking the limit $\rho \to 0$.} $\lambda(z) = \mu n / |z|$ for $\mu \coloneqq 1/C_0$, to conclude
$$ \ell(\partial E_1 \cap D(0,r_-)) \leq \Psi(E_2 \cap D(0,r_-)) + O( X_1 + X_2 + X_3 + X_4 + X_5)$$
where $X_1,\dots,X_5$ are defined as in \eqref{X1-def}--\eqref{X5-def}.  Direct calculation gives
$$ X_4, X_5 \ll r_- \ll \frac{\|p\|_1}{C_0} $$
while from Proposition \ref{pvar}(ii) and Proposition \ref{ets} we have
$$ X_1 \ll e^{O(n)} n^{-n} \|p\|_1^{n-1} \times \pi r_-^2 \ll r_- \ll \frac{\|p\|_1}{C_0}.$$
From Lemma \ref{x2-lem} we have
$$ X_2 \ll \mu \|p\|_1 = \frac{\|p\|}{C_0}.$$
From Lemma \ref{x3-lem} we have
$$ X_3 \ll \frac{nr_- + \|p_0\| \log n}{\mu n} \ll \frac{\|p\|}{C_0}.$$
We conclude that
$$ \ell(\partial E_1 \cap D(0,r_-)) \leq \Psi(D(0,r_-)) + O\left(\frac{\|p\|}{C_0}\right).$$
From Corollary \ref{defect-psi-cor} we have
$$ \Psi(D(0,r_-)) \leq 2 (n-1) r_- - c \Disp \{ \zeta : \zeta \in D(0,10 r_-) \}$$
for an absolute constant $c>0$.
The claim \eqref{in2-first} follows.

Suppose now that \eqref{pp0} holds.  From Lemma \ref{origin-repulsion} we conclude (for $C$ large enough) that the lemniscate $\partial E_1$ does not enter $D(0,\|p\|/4n)$.  We now repeat the above arguments, but with $\Omega$ set equal to $\Ann(0,\|p\|/4n,r_-)$ rather than $D(0,r_-)$, to obtain
$$ \ell(\partial E_1 \cap D(0,r_-)) \leq \Psi(E_2 \cap \Ann(0,\|p\|/4n,r_-)) + O( X_1 + X_2 + X_3 + X_4 + X_5)$$
From Lemma \ref{ankh} we have
$$ \Psi(D(0, \|p\|/4n)) \geq c\| p \|$$
for some absolute constant $c>0$, and hence
$$\Psi(E_2 \cap \Ann(0,\|p\|/4n,r_-)) \leq \Psi(E_2 \cap D(0,r_-)) - c \|p\|.$$
Repeating the previous arguments, we obtain the claim.
\end{proof}

Now we obtain an analogue of Proposition \ref{annulus} (with a version of the gain implicit in the proof of Proposition \ref{ets}, and suggested by Heuristic \ref{main-heuristic}).

\begin{proposition}[Bound in intermediate region]\label{annulus-2}  We have
    $$ \ell(\partial E_1 \cap \Ann(0,r_-,r_+)) \leq 2n (r_+-r_-) - c \sum_{\zeta \notin D(0,10r_-)} |\zeta| + O_{C_0}\left(\frac{\log n}{n} \|p\| \right)$$
for an absolute constant $c>0$ (independent of $C_0$).
\end{proposition}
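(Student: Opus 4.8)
\emph{Proof sketch.} The plan is to follow the strategy behind the proofs of Propositions \ref{annulus} and \ref{inside} and of the bound \eqref{lower} in Proposition \ref{ets}, adapted to the thin annulus $\Ann(0,r_-,r_+)$, which now lives at the scale $|z|\asymp_{C_0}\|p\|$. First, observe that since $\#\{\zeta:|\zeta|>r_+\}\le\|p\|_1/r_+\le\|p\|/(C_0^2\|p\|)<1$, \emph{all} of the $n-1$ critical points satisfy $|\zeta|\le r_+$; in particular $\sum_{\zeta\notin D(0,10r_-)}|\zeta|=\sum_{\zeta:\,10r_-\le|\zeta|\le r_+}|\zeta|$. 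Next, as in Proposition \ref{annulus}, one removes from the annulus the union of the $O_{C_0}(1/r_-)$ disks $D(\zeta,\delta_0|\zeta|)$ with $\delta_0=\delta_0(C_0)$ a small constant; by \eqref{quam-markov}, Lemma \ref{sting}, and Proposition \ref{pots}, the contribution of the removed portion of $\partial E_1$ to the length is $O_{C_0}\big(\tfrac{\log n}{n}\|p\|\big)$, which is acceptable. Call the remaining open region $\Omega$; on it $\delta(z)\gg_{C_0}1$, and on each circle $\partial D(0,r')$ meeting $\Omega$ we may (after slightly enlarging $r'$ within a small exceptional set of radii of measure $\ll_{C_0}\delta_0\|p\|_1\ll r$) assume $\delta\gg_{C_0}1$ holds on all of $\partial D(0,r')$.

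The core estimate is the crossing bound
$$ \#\big(\partial E_1\cap\partial D(0,r)\big)\ \le\ 2n-\#\{\zeta:|\zeta|\ge C_1 r\}\qquad(r_-\le r\le r_+,\ r\text{ ``good''}), $$
for a suitable absolute constant $C_1$. As in the proof of \eqref{lower}, this follows from the conjugation identity of Section \ref{conj-sec}: the lemniscate points on $\partial D(0,r)$ are zeros of $P_r(z)\coloneqq z^np(z)\tilde p(r^2/z)-z^n$ lying in the slightly larger disk $D(0,r')$ ($r'\in[r,2r)$), and a Rouch\'e comparison on $\partial D(0,r')$ with $-z^n\tfrac{zp'(z)}{n}$ identifies that zero count with $2n-\#\{\zeta:|\zeta|\ge r'\}$, since $\tilde p(r^2/z)=-1+(\text{negligible})$ there by \eqref{pin-2}. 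On the \emph{outer} part $r\gtrsim\|p\|$ the comparison $p(z)+1=\tfrac{zp'(z)}{n}(1+O_{C_0}(1/n))$ is supplied by Proposition \ref{pform} exactly as in Proposition \ref{ets}. On the \emph{inner} part $r\lesssim\|p\|$, Proposition \ref{pform} is unavailable; here one instead uses that $|z|\le 2r=o(1)$ forces $p(z)=-1+O_{C_0}(|z|^n)$ by \eqref{pin-2} and $|p'(z)|\asymp_{C_0}n|z|^{n-1}$ by Proposition \ref{pvar} and \eqref{p'-bound-lower} (valid since $\|p\|_1/|z|\le C_0^2$), so that $P_r(z)=-z^n(p(z)+1)+O_{C_0}(r^{2n})$ on $\partial D(0,r')$; one then counts the zeros of $p(z)+1$ inside $D(0,r')$ by Rouch\'e against its dominant Taylor monomial, and relates this count to $\#\{\zeta:|\zeta|<r'\}$ via the argument principle (the winding of $p'=(p+1)'$ differing from that of $p+1$ by one). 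Using the first arclength formula \eqref{arcl-1} on the inner part (where the transversality \eqref{radial} needed for \eqref{arcl-2} may fail) and the second formula \eqref{arcl-2} together with \eqref{cosec-asym} on the outer part, and absorbing the $1/|\sin|^{}$-correction terms of size $\sum 1/\delta^2$ into the error $O_{C_0}(\tfrac{\log n}{n}\|p\|)$ by Lemma \ref{sting} as in Proposition \ref{annulus}, one obtains
$$ \ell(\partial E_1\cap\Ann(0,r_-,r_+))\ \le\ \int_{r_-}^{r_+}\#\big(\partial E_1\cap\partial D(0,r)\big)\,dr+O_{C_0}\Big(\tfrac{\log n}{n}\|p\|\Big). $$

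Finally, inserting the crossing bound and integrating,
$$ \int_{r_-}^{r_+}\#\{\zeta:|\zeta|\ge C_1 r\}\,dr=\sum_\zeta\big|\{r\in[r_-,r_+]:\,r\le|\zeta|/C_1\}\big|\ \gg\ \sum_{\zeta:\,10r_-\le|\zeta|\le r_+}|\zeta|=\sum_{\zeta\notin D(0,10r_-)}|\zeta|, $$
where the lower bound uses $|\zeta|/C_1-r_-\gg|\zeta|$ for $|\zeta|\ge 10r_-$ (taking $C_1$ small enough) and $|\zeta|\le r_+$ for all $\zeta$; combining with $\int_{r_-}^{r_+}2n\,dr=2n(r_+-r_-)$ gives the claim. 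The main obstacle is the inner portion of the annulus ($r_-\le r\lesssim\|p\|$), where Proposition \ref{pform} cannot be invoked: there one must carry out the Rouch\'e/argument-principle count for $p(z)+1$ directly, controlling its low-order Taylor coefficients (equivalently the elementary symmetric functions of the critical points) precisely enough to keep the error term at the required size $O_{C_0}(\tfrac{\log n}{n}\|p\|)$ while still capturing the gain $\gg\sum_{\zeta:\,10r_-\le|\zeta|\lesssim\|p\|}|\zeta|$. $\square$
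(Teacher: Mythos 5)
Your outer-region argument matches the paper's: apply \eqref{arcl-2} with the transversality supplied by Proposition \ref{pform}, control the $1/|\sin|$ corrections via Lemma \ref{sting}, and convert the crossing deficit $2n - \#(\partial E_1\cap\Omega\cap\partial D(0,r))\ge \#\{\zeta:|\zeta|\ge r+r_-\}$ (proved by Rouch\'e on a well-chosen circle $\partial D(0,r')$, exactly as for \eqref{lower}) into the gain $c\sum_{\zeta\notin D(0,10r_-)}|\zeta|$ by integrating in $r$. But there are two genuine problems. First, the split into an ``inner part'' $r\lesssim\|p\|$ where you declare Proposition \ref{pform} unavailable rests on a misreading: the hypothesis $|z|\gg\|p\|$ is satisfied on the \emph{entire} annulus, since $|z|\ge r_-=\|p\|/C_0^2$ and $C_0$ is a fixed constant --- one merely picks up $C_0$-dependent implied constants, so the error becomes $O_{C_0}(1/(n\delta(z)))$, which is exactly how the paper proceeds. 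Your proposed substitute on the inner part is not carried out (you yourself call it ``the main obstacle''), and the step in which \eqref{arcl-1} --- an integral over $\alpha$, not over $r$ --- is supposed to yield a bound by $\int_{r_-}^{r_+}\#(\partial E_1\cap\partial D(0,r))\,dr$ does not follow. So as written the proof is incomplete precisely where you locate the difficulty, and the difficulty itself is illusory.

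Second, your exceptional set is too large. Removing disks $D(\zeta,\delta_0|\zeta|)$ with $\delta_0$ a constant costs, by Lemma \ref{sting}, up to $\ll n(\delta_0|\zeta|)^2/|\zeta|\asymp_{C_0} n\delta_0^2\|p\|$ of lemniscate length \emph{per disk} --- larger by a factor on the order of $n^2/\log n$ than the permitted error $O_{C_0}(\frac{\log n}{n}\|p\|)$, and indeed larger than the entire gain $c\sum_{\zeta\notin D(0,10r_-)}|\zeta|\le c\|p\|_1$ you are trying to extract. (Also, the number of relevant disks is $O_{C_0}(1)$ by \eqref{markov}, not $O_{C_0}(1/r_-)$.) The paper instead removes disks of radius $2\frac{\log^{1/2}n}{n}r_+$, each costing only $O_{C_0}(\frac{\log n}{n}\|p\|)$, accepts that $\delta(z)$ is then only bounded below by $\frac{\log^{1/2}n}{n}$ on the remaining region $\Omega$, and controls the resulting $\sum 1/\delta^2$ corrections by the dyadic argument of \eqref{ute-2}; the stronger condition $\delta\gg_{C_0}1$ is arranged only on the particular circles $\partial D(0,r')$ used in the Rouch\'e step, via a measure estimate on the set of bad radii.
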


\begin{proof}
By \eqref{markov}, there are $O_{C_0}(1)$ critical points $\zeta$ for which $D(\zeta, 2\frac{\log^{1/2} n}{n} r_+)$ intersects $\Ann(0,r_-, r_+)$; by the triangle inequality, each of these critical points obeys $|\zeta| \asymp_{C_0} \|p\| \asymp_{C_0} r_- \asymp_{C_0} r_+$.  By Lemma \ref{sting}, the combined contribution of these disks $D(\zeta, 2\frac{\log^{1/2} n}{n} r_+)$ to the lemniscate length is $O_{C_0}(\log n \|p\|/n)$.
It will thus suffice to establish the bound
$$ \ell(\partial E_1 \cap \Omega) \leq 2n (r_+-r_-) + O_{C_0}\left(\frac{\log n}{n} \|p\| \right)$$
where
$$ \Omega \coloneqq \left\{ z \in \Ann(0,r_-,r_+): \delta(z) \geq \frac{\log^{1/2} n}{n} \right\}.$$
We again aim to apply the second estimate \eqref{arcl-2} from Lemma \ref{arclength}.  Suppose that $z \in \partial E_1 \cap \Omega$.
From Proposition \ref{pform}, $p(z) \neq -1$, and
$$ \arg \frac{p(z)}{zp'(z)} = \arg \frac{p(z)}{p(z)+1}
+ O_{C_0}\left(\frac{1}{n \delta(z)} \right) \pmod \pi.$$
Since $|p(z)|=1$ and $p(z) = -1 + O_{C_0}(r_+^n) = -1 + O(1/n)$, we have
$$ \arg(p(z)) = \pi + O\left(\frac{1}{n}\right) \pmod{2\pi}; \quad \arg(p(z)+1) = \pm \frac{\pi}{2} + O\left(\frac{1}{n}\right) \pmod{2\pi};$$
and thus by \eqref{delta-upper}
$$ \arg \frac{p(z)}{zp'(z)} = \pm \frac{\pi}{2} + O_{C_0}\left(\frac{1}{n \delta(z)} \right) \pmod{2\pi}.$$
We can thus apply \eqref{arcl-2}, \eqref{cosec-asym} to conclude that
$$
 \ell(\partial E_1 \cap \Omega) = \int_{r_-}^{r_+} \sum_{z \in \partial E_1 \cap \Omega \cap \partial D(0,r)} 1 + O_{C_0}\left( \frac{1}{n^2 \delta(z)^2} \right)\ dr.
$$

Similarly to \eqref{ute}, we claim that
\begin{equation}\label{ute-2}
     \int_{r_-}^{r_+} \sum_{z \in \partial E_1 \cap \Omega \cap \partial D(0,r)} \frac{1}{\delta(z)^2}\ dr \ll_{C_0} n \|p\| \log n
\end{equation}
Indeed, bounding
$$ \frac{1}{\delta(z)^2} \ll_{C_0} 1 + \sum_{\zeta \notin D(0,r_-/2)} \frac{\|p\|^2}{|z - \zeta|^2}$$
for any $z$ contributing to the above expression,
and recalling from \eqref{markov} that there are only $O_{C_0}(1)$ critical points outside of $D(0,r_-/2)$, it suffices to show that
$$ \int_{r_-}^{r_+} \sum_{z \in \partial E_1 \cap \Omega \cap \partial D(0,r)} \frac{1}{|z - \zeta|^2}\ dr \ll_{C_0} \frac{n \log n}{\|p\|}$$
for any such critical point.  By dyadic decomposition, it suffices to show that
$$ \int_{r_-}^{r_+} \# (\partial E_1 \cap \Omega \cap \partial D(0,r) \cap D(\zeta, r_1)) \ dr \ll_{C_0} \frac{n r_1^2}{\|p\|}$$
for all $\frac{\log^{1/2} n}{n} \|p\| \ll_{C_0} r_1 \ll_{C_0} \|p\|$.  Using \eqref{arcl-2} again, we can lower bound the left-hand side by
$$ \ll_{C_0} \ell( \partial E_1 \cap \Omega \cap D(\zeta,r_1)),$$
and the required bound follows from Lemma \ref{sting} (covering $D(\zeta,r_1)$ by smaller disks if $r_1$ is large).

In view of \eqref{ute-2}, it will suffice to show that
$$ \int_{r_-}^{r_+} \left(2n - \sum_{z \in \partial E_1 \cap \Omega \cap \partial D(0,r)} 1\right)\ dr \gg \sum_{\zeta \notin D(0,10r_-)} |\zeta|.$$
Analogously to the proof of Proposition \ref{ets}, it will suffice to establish the lower bound
$$ 2n - \sum_{z \in \partial E_1 \cap \Omega \cap \partial D(0,r)} 1 \geq \# \{ \zeta: |\zeta| \geq r + r_- \}$$
for all $r_- \leq r \leq r_+$ (noting that all the critical points $\zeta$ have magnitude at most $\|p\|_1 \leq r_+$).

Continuing the proof of Proposition \ref{ets}, for any $r_- \leq r \leq r_+$, we can find a radius $r \leq r' \leq r+r_-/2$ such that $\delta(z) \gg_{C_0} 1$ for all $z \in \partial D(0,r')$, and it will suffice to show that the polynomial $z^n p(z) \tilde p(r^2/z) - z^n$ has at most $2n - k$ zeroes in $D(0,r')$, where $k$ is the number of critical points $\zeta$ with $|\zeta| \geq r + r_-$.  But this follows from Rouche's theorem by repeating the rest of the proof of Proposition \ref{ets} (with implied constants now depending on $C_0$ instead of $\eps$).
\end{proof}

Now we handle the outer region.

\begin{proposition}[Outer bound]\label{outside-again}  We have
    $$ \ell(\partial E_1 \cap \Ann(0,r_+,1+\sigma)) \leq \ell(\partial E_1(p_0)) - 2nr_+ + O\left( \frac{\|p\|}{C_0} \right).$$
\end{proposition}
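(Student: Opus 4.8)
The plan is to bound $\ell(\partial E_1\cap\Ann(0,r_+,1+\sigma))$ directly through the first arclength formula \eqref{arcl-1}, exploiting that by Proposition~\ref{ets} every critical point lies deep inside $D(0,r_+)$: indeed $\max_\zeta|\zeta|\le\|p\|_1=o(1)\le r_+/C_0^2$, so on the whole annulus $\{r_+\le|z|\le 1+\sigma\}$ one has $\delta(z)\ge 1-\|p\|_1/|z|\ge 1-1/C_0^2\gg 1$. First I would apply \eqref{arcl-1} with the semialgebraic open set $\Omega:=\{z: r_+<|z|<1+\sigma\}$, whose closure contains no critical point; since $E_4\subset D(0,1+\sigma)$ by \eqref{e1in} we have $\partial E_1\subset D(0,1+\sigma)$, so $\partial E_1\cap\Ann(0,r_+,1+\sigma)$ differs from $\partial E_1\cap\Omega$ only by $\partial E_1\cap\partial D(0,r_+)$, which is a finite point set (by \S\ref{conj-sec}, as $|p|$ is non-constant on that circle) and hence has zero arclength. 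Thus $\ell(\partial E_1\cap\Ann(0,r_+,1+\sigma))=\int_{-\pi}^{\pi}\sum_{z\in\Omega:\,p(z)=e^{i\alpha}}|p'(z)|^{-1}\,d\alpha$.

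The heart of the matter is a precise pointwise description of the contributing points. Writing $p'(z)=nz^{n-1}g(1/z)$ with $g(w):=\prod_\zeta(1-w\zeta)$ (from \eqref{pp-factor}), Taylor expansion together with the mean-zero identity $\sum_\zeta\zeta=0$ of \eqref{zeta-mean} and the bound \eqref{l1-l2} gives $\log|g(1/z)|=O(\|p\|_1^2/|z|^2)=O(\|p\|^2/|z|^2)$ on $\overline{\Omega}$ --- a full power better than the trivial $O(\|p\|_1/|z|)$. Combining this with Proposition~\ref{pform} --- applicable since $|z|\ge r_+\gg\|p\|$ and $\delta(z)\gg 1/n$ --- which gives $1+p(z)=\tfrac{zp'(z)}{n}\bigl(1+O(\tfrac{\|p\|^2}{n|z|^2})\bigr)$ and in particular $p(z)\ne-1$, one obtains, for any root $z\in\Omega$ of $p(z)=e^{i\alpha}$,
\[
|z|=|1+e^{i\alpha}|^{1/n}\bigl(1+O(\tfrac{\|p\|^2}{n|z|^2})\bigr),\qquad \frac{1}{|p'(z)|}=\frac1n\,|1+e^{i\alpha}|^{-(n-1)/n}\bigl(1+O(\tfrac{\|p\|^2}{n|z|^2})\bigr).
\]
Since $|z|\ge r_+=C_0^2\|p\|$ the error is $O(1/(nC_0^4))$, so $|z|\asymp|1+e^{i\alpha}|^{1/n}$ and one may replace $|z|^{-2}$ by $|1+e^{i\alpha}|^{-2/n}$ in the error; the first relation also shows that $N(\alpha):=\#\{z\in\Omega: p(z)=e^{i\alpha}\}$ vanishes unless $|1+e^{i\alpha}|^{1/n}\ge\rho_-:=r_+(1-O(1/(nC_0^4)))$, while trivially $N(\alpha)\le n$. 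Hence
\[
\ell(\partial E_1\cap\Ann(0,r_+,1+\sigma))\le\int_{I_{\rho_-}}|1+e^{i\alpha}|^{-(n-1)/n}\,d\alpha+O\!\left(\frac{\|p\|^2}{n}\int_{I_{\rho_-}}|1+e^{i\alpha}|^{-(n+1)/n}\,d\alpha\right),
\]
with $I_{\rho_-}$ as in Lemma~\ref{out}.

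For the main term, \eqref{lout} identifies $\int_{I_{\rho_-}}|1+e^{i\alpha}|^{-(n-1)/n}\,d\alpha$ with $\ell(\partial E_1(p_0)\setminus D(0,\rho_-))=\ell(\partial E_1(p_0))-2n\rho_-+O(\rho_-^{2n+1})$; since $2n\rho_-=2nr_+-O(r_+/C_0^4)=2nr_+-O(\|p\|/C_0^2)$ and $\rho_-^{2n+1}\le r_+^{2n+1}\ll\|p\|/C_0$ (as $r_+=C_0^2\|p\|=o(1)$), this is at most $\ell(\partial E_1(p_0))-2nr_++O(\|p\|/C_0)$. For the error term, the substitution $u=|1+e^{i\alpha}|^{1/n}$ (using $|1+e^{i\alpha}|\asymp\dist(\alpha,\{\pm\pi\})$ near $\pm\pi$) turns $|1+e^{i\alpha}|^{-(n+1)/n}\,d\alpha$ into $\asymp n\,u^{-2}\,du$ over $u\in[\rho_-,O(1)]$, so $\int_{I_{\rho_-}}|1+e^{i\alpha}|^{-(n+1)/n}\,d\alpha\ll n/\rho_-$ and the error term is $O(\|p\|^2/\rho_-)=O(\|p\|/C_0^2)$. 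Assembling the pieces yields the stated bound.

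The principal difficulty is purely quantitative: the contradiction that closes Section~\ref{main-iv-sec} demands an error that is \emph{exactly} $O(\|p\|/C_0)$, with no residual additive $O(1/n)$ or $O(C_0^{-2})$. Two observations make this attainable. First, the mean-zero property $\sum_\zeta\zeta=0$ upgrades the ``phase bias'' $\log|g(1/z)|$ from $O(\|p\|/|z|)$ to $O(\|p\|^2/|z|^2)$, without which the approach collapses. Second, one must \emph{not} extract the relative error from the $\alpha$-integral as a uniform constant --- doing so would multiply the main term $\asymp 2n$ and leave an irreducible $O(C_0^{-4})$ --- but instead integrate the weighted error $\tfrac{\|p\|^2}{n}|1+e^{i\alpha}|^{-(n+1)/n}$, whose total mass $\ll n/\rho_-=n/(C_0^2\|p\|)$ is precisely what supplies the gain $\|p\|/C_0^2$. (The apparent blow-up of $|1+e^{i\alpha}|^{-(n+1)/n}$ near $\alpha=\pm\pi$ is harmless: the singularity is integrable, and the cut-off $|1+e^{i\alpha}|\ge\rho_-^n$ keeps the integral of order $n/\rho_-$ rather than exponentially large.)
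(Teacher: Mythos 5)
Your proposal is correct and follows essentially the same route as the paper's proof: the first arclength formula on the critical-point-free annulus, the combination of Proposition \ref{pform} with the $e^{O(\|p\|^2/|z|^2)}$ bound on $|p'(z)|/(n|z|^{n-1})$ (which you re-derive from the factorization and \eqref{zeta-mean}, but which is just Proposition \ref{pvar}(i)), the trivial bound of $n$ roots per $\alpha$, Lemma \ref{out} for the main term, and the same $u$-substitution showing $\int_{I_{\rho_-}}|1+e^{i\alpha}|^{-(n+1)/n}\,d\alpha\ll n/\rho_-$ for the error. The quantitative bookkeeping (in particular keeping the relative error weighted by $|1+e^{i\alpha}|^{-2/n}$ inside the integral rather than extracting a uniform constant) matches the paper exactly.
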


\begin{proof}  This is similar to Proposition \ref{outside}, but the proof is simpler because the annulus $\Ann(0,r_+,1+\sigma)$ is known to be free of critical points.
Indeed, all the critical points $\zeta$ obey $|\zeta| \leq \|p\|_1 \leq r_+/C_0^2$, so in particular $\delta(z) \asymp 1$ for all $z \notin D(0,r_+)$.  By the first formula \eqref{arcl-1} of Lemma \ref{arclength}, we have
$$ \ell(\partial E_1 \cap \Ann(0,r_+,1+\sigma)) \leq \int_{-\pi}^{\pi} \sum_{z \in \overline{\Ann(0,r_+,1+\sigma)}: p(z) = e^{i\alpha}} \frac{1}{|p'(z)|} \ d\alpha.$$
For $z$ in the above sum, we have from Proposition \ref{pform} that
$$ e^{i\alpha} = p(z) = -1 + \frac{zp'(z)}{n} \left( 1 + O\left(\frac{\|p\|^2}{n |z|^2} \right) \right)$$
and hence
$$ |p'(z)| = \frac{n |1 + e^{i\alpha}|}{|z|} \left( 1 + O\left(\frac{\|p\|^2}{n |z|^2} \right) \right).$$
Also, from Proposition \ref{pvar} one has
$$ |p'(z)| = n |z|^{n-1} \left( 1 + O\left(\frac{\|p\|^2}{|z|^2} \right) \right)$$
and thus
$$ |1+e^{i\alpha}| = |z|^n \left( 1 + O\left(\frac{\|p\|^2}{|z|^2} \right) \right).$$
This implies that
$$ |z| \asymp |1+e^{i\alpha}|^{1/n}$$
and in fact we have the more precise
$$ |z| = |1 + e^{i\alpha}|^{1/n} \left( 1 + O\left(\frac{\|p\|^2}{n |1+e^{i\alpha}|^{2/n}} \right) \right)$$
while the condition $|z| \geq r_+$ implies
$$ |1 + e^{i\alpha}|^{1/n} \geq r_+ \left( 1 - O\left(\frac{1}{n C_0^4} \right) \right).$$
By the fundamental theorem of algebra, there are at most $n$ choices of $z$ associated to each $\alpha$. We thus have
$$ \ell(\partial E_1 \cap \Ann(0,r_+,1+\sigma)) \leq \int_{I_r} |1 + e^{i\alpha}|^{-\frac{n-1}{n}} \left( 1 + O\left(\frac{\|p\|^2}{n |1+e^{i\alpha}|^{2/n}} \right) \right)\ d\alpha$$
where
$$r \coloneqq r_+ \left( 1 - O\left(\frac{1}{n C_0^4} \right) \right) = r_0 - O\left( \frac{\|p\|}{n C_0^2} \right)$$
and $I_r$ was defined in Lemma \ref{out}.  From that lemma we have
$$ \int_{I_r} |1 + e^{i\alpha}|^{-\frac{n-1}{n}}\ d\alpha = \ell(\partial E_1(p_0)) - 2n r + O\left( \frac{\|p\|}{C_0^2} \right)$$
and so to obtain an acceptable error term, it will suffice to show that
$$ \int_{I_r} |1 + e^{i\alpha}|^{-\frac{n+1}{n}}\ d\alpha \ll \frac{n}{C_0 \|p\|}.$$
By symmetry it suffices to show that
$$ \int_{I_r \cap [0,\pi]} |1 + e^{i\alpha}|^{-\frac{n+1}{n}}\ d\alpha \ll \frac{1}{C_0 \|p\|}.$$
Making the change of variables $2 \sin(\alpha/2) = u^n$ as in the proof of Lemma \ref{out}, the left-hand side becomes
$$ \int_r^{2^{1/n}} \frac{n du}{u^2 \sqrt{1 - u^{2n}/4}};$$
this integrates to $O(n/r) = O(n/C_0^2 \|p\|)$, giving the claim.
\end{proof}

If \eqref{pp0} holds, then from Propositions \ref{inside-2}, \ref{annulus-2}, \ref{outside-again} and \eqref{split-2} (and discarding some terms with a favorable sign) we have
$$ \ell(\partial E_1(p)) \leq \ell(\partial E_1(p_0)) - c \|p\| + O\left(\frac{\|p\|}{C_0}\right)$$
giving the claim \eqref{a-ineq}.  If instead \eqref{pp0} fails, we instead have
$$ \ell(\partial E_1(p)) \leq \ell(\partial E_1(p_0)) - c \Disp \{ \zeta : \zeta \in D(0,10 r_-) \} - c \sum_{\zeta \notin D(0,10r_-)} |\zeta| + O\left(\frac{\|p\|}{C_0}\right)$$
and hence by Lemma \ref{disp-split}
$$ \ell(\partial E_1(p)) \leq \ell(\partial E_1(p_0)) - c \|p\|_1 + O\left(\frac{\|p\|}{C_0}\right)$$
after adjusting $c$ as necessary.  The claim \eqref{a-ineq} now follows from the failure of \eqref{pp0}.  Thus \eqref{a-ineq} is proven in all cases, giving Theorem \ref{main-thm}(iv).

\bibliographystyle{amsplain}

\begin{thebibliography}{10}

\bibitem{bloom}
T. F. Bloom, \emph{Erd\H{o}s Problems}, \url{https://www.erdosproblems.com}.

\bibitem{bochnak}
J. Bochnak, M. Coste, M.-F. Roy, Real Algebraic Geometry, Springer 1998.

\bibitem{borwein}
P. Borwein, \emph{The arc length of the lemniscate $\{|p(z)|=1\}$}, Proc. Amer. Math. Soc. \textbf{123} (1995), 797--799.


\bibitem{danchenko}
V. I. Danchenko, \emph{The lengths of lemniscates. Variations of rational functions}, Mat. Sb. \textbf{198} (2007), 51--58.

\bibitem{dolzenko}
E. P. Dol\v zenko, \emph{Some estimates concerning algebraic hypersurfaces and derivatives of rational functions}, Dokl. Akad. Nauk SSSR \textbf{139} (1961), 1287--1290.

\bibitem{EHP}
P. Erd\H{o}s, F. Herzog, G. Piranian, \emph{Metric properties of polynomials}, J. Analyse Math. \textbf{7} (1958), 125--148.


\bibitem{er61}
P. Erd\H{o}s, \emph{Some unsolved problems}, Magyar Tud. Akad. Mat. Kutató Int. Közl. \textbf{6} (1961), 221--254.

\bibitem{er82e}
P. Erd\H{o}s, \emph{Some of my favourite problems which recently have been solved}, in: Combinatorial Mathematics (Proc. Third Hungarian Colloq., Keszthely, 1976), Vol. I, North-Holland, Amsterdam-New York, 1982, pp. 59--79.

\bibitem{er90}
P. Erd\H{o}s, \emph{Some of my favourite unsolved problems}, in: A tribute to Paul Erdős (1990), 467--478.

\bibitem{er97f}
P. Erd\H{o}s, \emph{Some unsolved problems}, in: Combinatorics, geometry and probability (Cambridge, 1993) (1997), 1--10.

\bibitem{eremenko}
A. Eremenko, W. Hayman, \emph{On the length of lemniscates}, Michigan Math. J. \textbf{46} (1999), 409--415

\bibitem{fryntov}
A. Fryntov, F. Nazarov, \emph{New estimates for the length of the Erdos-Herzog-Piranian lemniscate}, Alexandrov, Alexei (ed.) et al., Linear and complex analysis. Dedicated to V. P. Havin on the occasion of his 75th birthday. Providence, RI: American Mathematical Society (AMS) (ISBN 978-0-8218-4801-2/hbk). Translations. Series 2. American Mathematical Society 226; Advances in the Mathematical Sciences 63, 49--60 (2009).

\bibitem{hl}
G. E. Hardy, J. E. Littlewood, G. P\'olya. Inequalities. First / second edition, Cambridge University Press, London and New York, 1934 / 1952.

\bibitem{kosukhin}
O. N. Kosukhin, \emph{On estimates of lengths of lemniscates},
Translation of Mat. Zametki 92 (2012), no. 6, 872--883
Math. Notes 92 (2012), no. 5-6, 779--789.



\bibitem{mayboroda}
S. Mayboroda, \emph{The effect of disorder and irregularities on solutions to boundary value problems and spectra of differential operators}, Sirakov, Boyan (ed.) et al., Proceedings of the International Congress of Mathematicians 2018, ICM 2018, Rio de Janeiro, Brazil, August 1--9, 2018. Volume III. Invited lectures. Hackensack, NJ: World Scientific; Rio de Janeiro: Sociedade Brasileira de Matem\'atica (SBM). 1691-1712 (2018).

\bibitem{polya}
G. P\'olya, \emph{Beitrag zur Verallgemeinerung des Verzerrungssatzes auf mehrfach zusammenh\"angedende Gebiete}, S-B. Akad. Wiss, Berlin (1928), 228--232 \& 280--282. Also in volume 1 of P\'olya's Collected Papers, MIT press, (1974).

\bibitem{pommerenke59}
Ch. Pommerenke, \emph{On some problems by Erd\H{o}s, Herzog and Piranian}, Michigan Math. J. \textbf{6} (1959), 221--225.

\bibitem{pommerenke61}
Ch. Pommerenke, \emph{On metric properties of complex polynomials}, Michigan Math. J. \textbf{8} (1961), 97--115.

\bibitem{tao-sendov}
T. Tao, \emph{Sendov's conjecture for sufficiently-high-degree polynomials}, Acta Math. \textbf{229} (2022), 347--392.




\end{thebibliography}

\end{document}